\theoremstyle{plain}
\newcommand{\R}{  \mathbb{R}   }
\newcommand{\E}{  \mathbb{E}   }
\newcommand{\eps}{\varepsilon}
\newcommand{\e}{  \text{e}   }
\newcommand{\C}{  \mathbb{C}   }
\newcommand{\Z}{  \mathbb{Z}   }
\newcommand{\N}{  \mathbb{N}   }
\newcommand{\T}{  \mathbb{T}   }
\newcommand{\wt}{  \widetilde   }
\newcommand{\im}{  \text{Im}\;   }
\newcommand{\re}{  \text{Re}\;   }
\newcommand{\om}{  \omega   }
\newcommand{\p}{  \partial   }
\newcommand{\dis}{  \displaystyle   }
\newcommand{\ov}{  \overline  }
\renewcommand{\a}{  \alpha   }
\newcommand{\s}{  \sigma   }
\renewcommand{\>}{  \rangle   }
\renewcommand{\phi}{  \varphi   }
\numberwithin{equation}{section}
\author{ Laurent Thomann }
\address{Universit\'e de Nantes, Laboratoire de Math\'ematiques J. Leray, UMR CNRS 6629\\
2, rue de la Houssini\`ere \\
F-44322 Nantes Cedex 03, France. }
\email{laurent.thomann@univ-nantes.fr}
\urladdr{http://www.math.sciences.univ-nantes.fr/$\sim$thomann/}
\author{ Nikolay Tzvetkov}
\address{D\'epartement de Math\'ematiques, Universit\'e de Cergy-Pontoise, Site Saint-Martin, 95302
Cergy-Pontoise Cedex, France.}
\email{nikolay.tzvetkov@u-cergy.fr}
\title[Gibbs measure for the periodic DNLS]{Gibbs measure for the periodic derivative nonlinear Schr\"odinger equation} 
\begin{document}
\frontmatter
 \begin{abstract}
 In this paper we construct a Gibbs measure for the derivative Schr\"odinger equation on the circle. The construction uses some renormalisations of Gaussian series and Wiener chaos estimates, ideas which have already been used by the second author in a work on the Benjamin-Ono equation.
\end{abstract}

\subjclass{35BXX ;  37K05 ; 37L50 ; 35Q55}
\keywords{Nonlinear Schr\"odinger equation,  random data, Gibbs measure}
\thanks{The authors were supported in part by the  grant ANR-07-BLAN-0250.}

\maketitle
\mainmatter


\section{Introduction}

Denote by $\T=\R/2\pi\Z$  the circle. The purpose of this work is to construct a Gibbs measure associated to the   derivative nonlinear Schr\"odinger equation
\begin{equation}\label{DNLS}
\left\{
\begin{aligned}
&i\partial_t u+\p_{x}^{2} u   = i\partial_{x}\big(|u|^{2}u\big),\;\;
(t,x)\in\R\times \mathbb{T},\\
&u(0,x)= u_{0}(x).
\end{aligned}
\right.
\end{equation}
Many recent results  (see the end of Section \ref{Sect1.2}) show that a Gibbs measure is an efficient tool to construct global rough solutions of nonlinear dispersive equations. This is the main motivation of this paper: we hope that our result combined with a local existence theory for \eqref{DNLS} (e.g. a result like Gr\"unrock-Herr \cite{GrunHerr}) on the support of the measure will give a global existence result for irregular initial conditions. A second motivation is the fact that an invariant measure  is an object  which fits well in the study of recurrence properties given by the Poincar\'e theorem, of the flow of  \eqref{DNLS}.\\[2pt] 
For $f\in L^{2}(\T)$, denote by
$\dis \int_{\T}f(x)\text{d}x=\frac{1}{2\pi}\int_{0}^{{2\pi}}f(x)\text{d}x$.
The following quantities are conserved (at least formally) by the flow of the equation\\
$\bullet$ The mass 
\begin{equation*}
M(u(t))=\|u(t)\|_{{L^{2}}}=\|u_{0}\|_{{L^{2}}}=M(u_{0}).
\end{equation*} 
$\bullet$ The energy
\begin{eqnarray*}
H(u(t))&=&\int_{\T}|\partial_{x}u|^{2}\text{d}x+\frac32\im\int_{\T}|u|^{2}u\,\partial_{x}\ov{u}\text{d}x+\frac12\int_{\T}|u|^{6}\text{d}x\\\nonumber
&=&\int_{\T}|\partial_{x}u|^{2}\text{d}x-\frac34\im\int_{\T}\ov{u}^{2}\,\partial_{x}(u^{2})\text{d}x+\frac12\int_{\T}|u|^{6}\text{d}x\\\nonumber
&=&\int_{\T}|\partial_{x}u|^{2}\text{d}x+\frac34i\int_{\T}\ov{u}^{2}\,\partial_{x}(u^{2})\text{d}x+\frac12\int_{\T}|u|^{6}\text{d}x\\\nonumber
&=&H(u_{0}).
\end{eqnarray*} 
The conservation of the energy can be seen by a direct computation (see also the appendix of this paper.)\\
Notice that the momentum
\begin{eqnarray*}
P(u(t))&=&\frac12\int_{\T}|u|^{4}\text{d}x+i \int_{\T}\ov{u}\,\partial_{x}u\text{d}x\\
&=&\frac12\int_{\T}|u|^{4}\text{d}x-\im \int_{\T}\ov{u}\,\partial_{x}u\text{d}x=P(u_{0}),
\end{eqnarray*} 
is also formally conserved by \eqref{DNLS}. Indeed it is the Hamiltonian of \eqref{DNLS} associated to a symplectic structure involving $\partial_{x}$ (see \cite{KP}). However, we won't use this fact here. Instead, our measure will be deduced from a Hamiltonian formulation based on $H$ of a transformed form of \eqref{DNLS}. \\

   Let us define the complex vector space $\dis {E_N={\rm span}\Big( (  \e^{inx})_{-N\leq n\leq N}\Big)}$. Then we introduce  the spectral projector $\Pi_{N}$ on $E_{N}$ by 
\begin{equation}\label{proj}
\Pi_{N}\Big(\sum_{n\in \Z}c_n \e^{inx}\Big)=\sum_{n=-N}^{N}c_n \e^{inx}\,.
\end{equation}~\\
\noindent Let $({\Omega, \mathcal{F},{\bf p}})$ be a probability space and $\big(g_{n}(\om)\big)_{n\in \Z}$ a sequence of independent complex normalised gaussians, $g_{n}\in \mathcal{N}_{\C}(0,1)$. We can write  
\begin{equation}\label{def.gn}
g_{n}(\om)=\frac1{\sqrt{2}}\big(h_{n}(\om)+il_{n}(\om)\big),
\end{equation}
where$\big(h_{n}(\om)\big)_{n\in \Z}, \,\big(l_{n}(\om)\big)_{n\in \Z}$ are independent standard real Gaussians  $\mathcal{N}_{\R}(0,1)$.~\\

\subsection{Definition of the  measure for \eqref{DNLS}}~\\

\noindent 
In the sequel we will use the notation $\<n\>=\sqrt{n^{2}+1}$. \\[5pt]
 Now write $c_{n}=a_{n}+ib_{n}$. \\For $N\geq 1$, consider the probability measure on $\R^{2(2N+1)}$ defined by
\begin{equation}\label{def.mu}
\text{d}\mu_N=d_{N}\prod_{n=-N}^{N}e^{-\<n\>^{2}(a_n^2+b_n^2)}\text{d}a_n \text{d}b_n,
\end{equation}
 where $d_N$ is such that
\begin{multline}
\frac{1}{d_{N}}=\prod_{ n=-N}^{N}\int_{\R^2}e^{-\<n\>^{2}(a_n^2+b_n^2)}\text{d}a_n\text{d}b_n\\
=\pi^{2N+1}\Big(\,\prod_{n=-N}^{N}\frac{1}{\<n\>}\, \Big)^{2}=  \pi^{2N+1}\Big(\,\prod_{n=1}^{N}\frac{1}{\<n\>} \,\Big)^{4}  \,.
\end{multline}
The measure $\mu_N$ defines a measure on $E_N$ via the map
$$
(a_n,b_n)_{n=-N}^{N}\longmapsto \sum_{n=-N}^{N}(a_n+ib_n)e^{inx},
$$
which will still be denoted by $\mu_N$. 
 Then $\mu_N$ may be seen as the distribution of the $E_N$ valued random
variable
\begin{equation}\label{def.phin}
\omega\longmapsto \sum_{|n|\leq N}\frac{g_{n}(\om)}{\<n\>}\e^{inx}\equiv
\varphi_{N}(\omega,x),
\end{equation}
where $(g_n)_{n=-N}^{N}$ are  Gaussians as in \eqref{def.gn}.\\[5pt]

\noindent Let $\sigma<\frac12$. Then $(\varphi_N)$ is a Cauchy sequence in $L^2\big(\Omega;\,{H}^{\sigma}(\T)\big)$ 
which defines
\begin{equation}\label{def.phi}
\phi(\om,x)=\sum_{n\in \Z}\frac{g_{n}(\om)}{\<n\>}\e^{inx},
\end{equation}
as the limit of $(\varphi_N)$. Indeed, the map
$$
\omega\longmapsto \sum_{n\in \Z}\frac{g_{n}(\om)}{\<n\>}\e^{inx},
$$
defines a (Gaussian) measure on ${ H}^{\sigma}(\T)$ which will be denoted
by $\mu$.\\[5pt]
For $u\in L^{2}(\T)$, we will write $u_{N}=\Pi_{N}u$. Now define 
\begin{equation*} 
f_{N}(u)=\im \int_{\T}\ov{u_{N}^{2}(x)}\,\partial_{x}(u_{N}^{2}(x))\text{d}x.
\end{equation*} 
\noindent Let $\kappa>0$, and let $\chi \;:\;\R\longrightarrow \R$, $0\leq \chi \leq 1$ be a continuous function with support $\text{supp}\;\chi\subset [-\kappa, \kappa]$ and so that $\chi =1$ on $[-\frac{\kappa}2, \frac{\kappa}2]$. We define the density 
\begin{equation}\label{G.N}
G_{N}(u)=\chi\big(\|u_{N}\|_{L^2(\T)}\big)
\e^{\frac{3}{4}f_{N}(u)-\frac12\int_{\T}|u_{N}(x)|^6\text{d}x},
\end{equation}
and the measure $\rho_{N}$ on $H^{\s}(\T)$ by 
\begin{equation}
\text{d}\rho_{N}(u)=G_{N}(u)\text{d}\mu(u).
\end{equation}
 
\subsection{Statement of the main result}\label{Sect1.2}~\\
Our main result which defines a formally invariant  measure for \eqref{DNLS} reads
\begin{theo}\label{thm1}
The sequence $G_{N}(u)$ defined in \eqref{G.N} converges in measure, as $N\rightarrow\infty$, with respect to the measure $\mu$.
Denote by $G(u)$ the limit of \eqref{G.N} as $N\rightarrow\infty$, and we define $\text{d}\rho(u)\equiv G(u)\text{d}\mu(u)
$.\\
Moreover,  for every $p\in [1,\infty[$, there exists $\kappa_{p}>0$ so that for all $0<\kappa\leq \kappa_{p}$,
$G(u)\in L^{p}(\text{d}\mu(u))$ and  the  sequence $G_{N}$ converges   to $G$ in $L^{p}(\text{d}\mu(u))$, as $N$ tends to
infinity. \\
\end{theo}
\begin{rema}
In particular, for any Borel set $A\subset H^{\s}(\T)$, $\lim_{N\to \infty} \rho_{N}(A)=\rho(A)$.

 It is not clear to us how to prove the convergence property, if we define $\rho_{N}$ as follows : For any Borel set $A\subset H^{\s}(\T)$,  $\rho_{N}(A)=\tilde{\rho}_{N}(A\cap E_{N})$ where $\text{d}\tilde{\rho}_{N}=G_{N}(u)\text{d} \mu_{N}(u)$. In particular, the   convergence stated in  \cite[Theorem 1]{Tzvetkov3} is not proven there. However, if we define in the context of \cite{Tzvetkov3} $\rho_{N}$ as we did here, the   convergence property holds true. In addition the measure $\rho_{N}$   defined here (see also \cite{BTT}) is more natural, since it is invariant by the truncated flow $\Phi_{N}(t)$ of equation \eqref{ODE}.
\end{rema}

One can show that by varying the cut-off ${\chi}$, the support of $\rho$ 
describes the support of $\mu$ (see Lemma \ref{4.3} below).\\[2pt]
\indent The main ideas of this paper come from the work of the second author \cite{Tzvetkov3} where a similar construction is made for the Benjamin-Ono equation using the pioneering work of Bourgain \cite{Bourgain1}. In \cite{Tzvetkov3}, one of the main difficulties is that on the support of the measure $\mu$, the $L^{2}$ norm is a.s. infinite, which is not the case in our setting, since for any $\sigma <\frac12$, $\phi(\omega)\in H^{\sigma}(\T)$, for almost all $\omega\in \Omega$. Here the difficulty is to treat the term $\dis \int_{\T}\ov{u}^{2}\,\partial_{x}(u^{2})\text{d}x $ in the conserved quantity $H$. Roughly speaking, it should be controlled by the $H^{\frac12}$ norm, but this is not enough, since $\|u\|_{H^{\frac12}(\T)}=\infty$ on the support of $\text{d}\mu$. However, we will see in Section \ref{Section2}, that we can handle this term thanks to an adapted decomposition and thanks to the integrability properties of the Gaussians. This is the main new idea in this paper. \\[5pt]
\indent The result of Theorem \ref{thm1} may be the first step to obtain almost sure global well-posedness  for \eqref{DNLS}, with initial conditions of the form \eqref{def.phi}. To reach such a  result, we will also need a suitable local existence theory on the statistical set, and prove the invariance of the measure $\text{d}\rho$ under this flow. For instance, this program was fruitful for Bourgain \cite{Bourgain2, Bourgain1} and Zhidkov \cite{Zhidkov} for NLS on the torus, Tzvetkov \cite{Tzvetkov2, Tzvetkov1} for NLS on the disc, Burq-Tzvetkov \cite{BT3} for the wave equation, Oh \cite{Oh1, Oh2} for Schr\"odinger-Benjamin-Ono and KdV  systems,  and Burq-Thomann-Tzvetkov \cite{BTT} for the one-dimensional Schr\"odinger equation.  
For the DNLS equation, we plan to pursue this issue in a subsequent work.\\
\subsection{Notations and structure of the paper}~

 \begin{enonce*}{Notations}
 In this paper $c$, $C$ denote constants the value of which may change
from line to line. These constants will always be universal, or uniformly bounded with respect to the other parameters.\\
We denote by $\Z$ (resp. $\N$) the set of the integers (resp.  non negative integers), and $\N^{*}=\N\backslash\{0\}$.\\
For $x\in \R$, we write $\<x\>=\sqrt{x^{2}+1}$. For $u\in L^{2}(\T)$, we usually write $u_{N}=\Pi_{N}u$, where $\Pi_{N}$ is the projector defined in \eqref{proj}.\\
The notation    $L^{q}$ stands for $L^{q}(\T)$ and $H^{s}=H^{s}(\T)$.
\end{enonce*}

The paper is organised as follows. In Section \ref{Section2} we give some large deviation bounds and some results on the Wiener chaos at any order. In Section \ref{Section3} we study the term of the Hamiltonian containing the derivative, and Section \ref{Section4} is devoted to the proof of Theorem \ref{thm1}.\\ In the appendix, we give the Hamiltonian formulation of the transformed form of \eqref{DNLS}.

\section{Preliminaries : some stochastic estimates}\label{Section2}~


\subsection{Large deviation estimates}~
\begin{lemm}\label{lem.ponctuel}
Let $\big(\gamma_{n}(\om)\big)_{n\in \Z}\in \mathcal{N}_{\R}(0,1)$ be a sequence of independent, normalised real Gaussians. Let $(c_{n})_{n\in \Z}$, $(d_{n})_{n\in \Z}$ be two bounded sequences of real numbers. Then there exist   $c,C>0$ so that for all $1\leq N\leq \lambda $
\begin{equation*}
{\bf p}\Big(\,\om \in \Omega \;:\; \Big| \sum_{|n_{1}|,|n_{2}|\leq N}\frac{c_{n_{1}} d_{n_{2}}}{\<n_{1}\>}\,\gamma_{n_{1}}(\om)\,\gamma_{n_{2}}(\om)  \Big|>\lambda\, \Big)\leq C e^{-c\lambda }.
\end{equation*}
\end{lemm}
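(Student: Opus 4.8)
The plan is to exploit the fact that the quantity under consideration factors as a product of two Gaussians, rather than to expand the double sum directly. Setting
\[
A(\om)=\sum_{|n|\leq N}\frac{c_{n}}{\<n\>}\gamma_{n}(\om),\qquad B(\om)=\sum_{|n|\leq N}d_{n}\gamma_{n}(\om),
\]
the expression inside the probability is exactly the product $A(\om)B(\om)$. Each factor is a centred real Gaussian (a linear combination of the independent $\gamma_n$), so $AB$ is an element of the homogeneous Wiener chaos of order two, and the natural strategy is the moment method: I would bound $\|AB\|_{L^p(\Omega)}$ for $p\geq 2$ and then convert this into a tail estimate via Markov's inequality, optimising in $p$ at the very end.

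The main inputs are the moment bounds. Since the $\gamma_n$ are independent and normalised,
\[
\|A\|_{L^2(\Omega)}^2=\sum_{|n|\leq N}\frac{c_n^2}{\<n\>^2}\lessim\sum_{n\in\Z}\frac1{\<n\>^2}\lessim 1,\qquad \|B\|_{L^2(\Omega)}^2=\sum_{|n|\leq N}d_n^2\lessim N,
\]
the first bound using only the boundedness of $(c_n)$ together with the summability of $\<n\>^{-2}$, the second the boundedness of $(d_n)$. For the higher moments I would invoke the elementary Gaussian inequality $\|X\|_{L^{q}(\Omega)}\lessim\sqrt q\,\|X\|_{L^2(\Omega)}$, valid for every centred Gaussian $X$ and $q\geq 2$ (equivalently, hypercontractivity of the Ornstein--Uhlenbeck semigroup on the second chaos); combined with the Cauchy--Schwarz inequality $\|AB\|_{L^p}\leq\|A\|_{L^{2p}}\|B\|_{L^{2p}}$ this gives
\[
\|AB\|_{L^p(\Omega)}\lessim p\,\|A\|_{L^2}\|B\|_{L^2}\lessim p\sqrt N .
\]

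It then remains to optimise. Markov's inequality yields, for every $p\geq 2$,
\[
{\bf p}\big(|AB|>\lambda\big)\leq\Big(\frac{Cp\sqrt N}{\lambda}\Big)^{p},
\]
and this is where the hypothesis $N\leq\lambda$ enters: it lets me bound $\sqrt N\leq\sqrt\lambda$ and choose $p$ proportional to $\lambda/\sqrt N$, which is $\geq 2$ precisely because $N\leq\lambda$. With this choice the bracket becomes a fixed fraction of $1$ and the right-hand side collapses to an exponentially small factor $e^{-cp}$, i.e. a bound of the form $Ce^{-c\lambda/\sqrt N}\leq Ce^{-c\sqrt\lambda}$; the finitely many small values of $\lambda$ for which the optimal $p$ would drop below $2$ are absorbed by enlarging $C$. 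I expect the only genuinely delicate point to be this $N$-dependence of the second moment: because the smoothing weight $\<n\>^{-1}$ sits on a single factor, $\|B\|_{L^2}$ grows like $\sqrt N$ instead of staying bounded, and it is exactly the restriction $N\leq\lambda$ that controls this growth and keeps the optimal exponent admissible. (Had the weight been present on both Gaussian factors, one would have $\|B\|_{L^2}\lessim 1$ and the same scheme would produce the exponential decay directly, with $\lambda$ in place of $\sqrt\lambda$.) Everything else — the factorisation and the Gaussian moment inequality — is routine.
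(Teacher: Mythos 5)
Your factorisation $Q=AB$ with $A=\sum_{|n|\le N}\frac{c_n}{\langle n\rangle}\gamma_n$, $B=\sum_{|n|\le N}d_n\gamma_n$, and the moment bounds that follow from it, are correct; but the conclusion you actually reach, ${\bf p}(|Q|>\lambda)\le Ce^{-c\lambda/\sqrt N}\le Ce^{-c\sqrt\lambda}$, is strictly weaker than the assertion $Ce^{-c\lambda}$ of Lemma \ref{lem.ponctuel}. As a proof of the statement as written your argument therefore does not close, and the loss is not cosmetic: in Proposition \ref{prop.lp} the lemma is applied at threshold $\lambda\sim\kappa^{-2}\ln(\cdot)$, and only the rate $e^{-c\lambda}$ yields the polynomial decay $\lambda^{-c/\kappa^2}$ needed to make the $L^p$ integrals converge, while $e^{-c\sqrt\lambda}$ gives only $e^{-C\sqrt{\ln\lambda}}$, which is not integrable against $\lambda^{p-1}$. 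The paper proceeds quite differently: it applies the exponential Chebyshev inequality to $Q$ itself, decouples each product $\gamma_{n_1}\gamma_{n_2}$ by Young's inequality $ab\le\frac{\eps}{2}a^2+\frac1{2\eps}b^2$, evaluates the resulting Gaussian exponential moments via $\E\big[e^{\mu\gamma^2}\big]=(1-2\mu)^{-1/2}$, and arrives at ${\bf p}(Q>\lambda)\le Ce^{-t\lambda+Ct^2N}$ with a \emph{fixed} small $t>0$, whence $Ce^{-c\lambda}$ for $N\le\lambda$.

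You should not, however, expect to upgrade your bound to match: the rate you obtain is sharp for a product of Gaussians. Take for instance $c_n\equiv 1$ and $d_n=\mathrm{sgn}(n)$, so that $A$ and $B$ are independent with $\sigma_A^2=O(1)$ and $\sigma_B^2=2N$; then ${\bf p}(AB>\lambda)\ge{\bf p}(A>a)\,{\bf p}(B>\lambda/a)\gtrsim\exp\big(-\frac{a^2}{2\sigma_A^2}-\frac{\lambda^2}{2a^2\sigma_B^2}\big)$, which after optimising at $a^2=\lambda\sigma_A/\sigma_B$ is $\gtrsim e^{-C\lambda/(\sigma_A\sigma_B)}=e^{-C\lambda/\sqrt N}$; for $N=\lambda$ this is $e^{-C\sqrt\lambda}$ and eventually exceeds $Ce^{-c\lambda}$ for any fixed $c,C>0$. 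The step at which the paper's computation produces the stronger rate is the factorisation in \eqref{CC}, where the double product $\prod_{|n_1|,|n_2|\le N}f(n_1)g(n_2)$ is replaced by $\prod_{|n_1|\le N}f(n_1)\cdot\prod_{|n_2|\le N}g(n_2)$, silently dropping a power $2N+1$ from each factor; equivalently, $B^2=(\sum_n d_n\gamma_n)^2$ is replaced by its diagonal $\sum_n d_n^2\gamma_n^2$, whose exponential moments tolerate a constant $t$, whereas the true $B^2$ forces $t\lesssim N^{-1/2}$ and returns exactly your $e^{-c\lambda/\sqrt N}$. So the concrete gap in your proposal is that it proves a weaker inequality than the one stated; but the comparison with the lower bound indicates that the stated inequality itself requires either a weaker exponent or an additional decay hypothesis on $(d_n)$, and the discrepancy sits in the statement (and in \eqref{CC}) rather than in your argument.
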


\begin{proof}
We estimate 
\begin{equation*}
A_{\lambda,N}\equiv {\bf p}\Big(\,\om \in \Omega \;:\;  \sum_{|n_{1}|,|n_{2}|\leq N}\frac{c_{n_{1}} d_{n_{2}}}{\<n_{1}\>}\,\gamma_{n_{1}}(\om)\,\gamma_{n_{2}}(\om)  >\lambda\, \Big).
\end{equation*}
For all $t>0$ and all r.v. $X$ we have, by the Tchebychev inequality
\begin{equation}\label{tcheb}
{\bf p}\Big(\,\om \in \Omega \;:\; X>\lambda\, \Big)\leq \e^{-\lambda t}\,\E\big[\,\e^{tX}\,\big].
\end{equation}
Thus we obtain that  for all $\eps>0$ 
\begin{eqnarray}
A_{\lambda,N}&\leq & \e^{-t\lambda}\,  \E \Big[\,\prod_{|n_{1}|,|n_{2}|\leq N}\e^{t\frac{c_{n_{1}} d_{n_{2}}}{\<n_{1}\>}\gamma_{n_{1}}\gamma_{n_{2}}}\,\Big] 
\nonumber\\
&\leq & \e^{-t\lambda}\,  \E \Big[\,\prod_{|n_{1}|,|n_{2}|\leq N}\e^{\frac{\eps}2\frac{c^{2}_{n_{1}} }{\<n_{1}\>^{2}}\gamma^{2}_{n_{1}}+\frac1{2\eps}t^{2}d^{2}_{n_{2}}\gamma^{2}_{n_{2}}}\,\Big]\nonumber \\
&= &\e^{-t\lambda}\,  \E \Big[\,\Big(\prod_{|n_{1}|\leq N}\e^{\frac{\eps}2\frac{c^{2}_{n_{1}} }{\<n_{1}\>^{2}}\gamma^{2}_{n_{1}}}\Big)\,\Big(\prod_{|n_{2}|\leq N}\e^{\frac1{2\eps}t^{2}d^{2}_{n_{2}}\gamma^{2}_{n_{2}}}\Big)\,\Big].\label{CC}
\end{eqnarray}
Now, the  Cauchy-Schwarz inequality and the independence of the $\gamma_{n}$ give
 \begin{eqnarray}
A_{\lambda,N}&\leq &\e^{-t\lambda}\,  \E \Big[\,\prod_{|n_{1}|\leq N}\e^{\eps\frac{c^{2}_{n_{1}} }{\<n_{1}\>^{2}}\gamma^{2}_{n_{1}}}\Big]^{\frac12}\,\E\Big[ \prod_{|n_{2}|\leq N}\e^{t^{2}d^{2}_{n_{2}}\gamma^{2}_{n_{2}}/\eps}\,\Big]^{\frac12}\nonumber\\
&= &\e^{-t\lambda}\,\Bigg(\prod_{|n_{1}|\leq N} \E \Big[ \e^{\eps\frac{ c^{2}_{n_{1}} }{\<n_{1}\>^{2}}\gamma^{2}_{n_{1}}}\Big]\, \prod_{|n_{2}|\leq N}\E\Big[ \e^{t^{2}d^{2}_{n_{2}}\gamma^{2}_{n_{2}}/\eps}\,\Big]\Bigg)^{\frac12}.\label{CC*}
\end{eqnarray}
Thanks to a change of variables we can compute explicitly the expectations in the right hand side of \eqref{CC*}. In fact for $\mu<\frac12$  
\begin{equation*}
 \E \Big[\,\e^{\mu\gamma_{n}^{2}}\,\Big]=\Big(1-2\mu\Big)^{-\frac12}.
 \end{equation*}
For $0\leq x\leq \frac12$, we have the inequality $(1-x)^{-1}\leq \e^{2x}$, hence    we deduce that for $\mu<\frac14$
\begin{equation}\label{eq.int}
 \E \Big[\,\e^{\mu\gamma_{n}^{2}}\,\Big]\leq  \e^{2\mu}.
 \end{equation}
Recall that $(c_{n}), (d_{n})$ are bounded. We now fix $\eps>0$ so that for all $|n|\leq N$, $\dis \eps \frac{c^{2}_{n}}{\<n\>^{2}}\leq \frac12$. Then the   bound \eqref{eq.int} implies 
\begin{equation}\label{P1}
 \prod_{|n_{1}|\leq N} \E \Big[ \e^{\eps\frac{ c^{2}_{n_{1}} }{\<n_{1}\>^{2}}\gamma^{2}_{n_{1}}}\Big]\leq 
\exp{\Big(2\eps\,\sum_{|n_{1}|\leq N}{\frac{ c^{2}_{n_{1}} }{\<n_{1}\>^{2}}}\Big)}\leq C.
 \end{equation}
With the previous choice of $\eps >0$ and $t>0$ small enough we also have
\begin{equation}\label{P2}
 \prod_{|n_{2}|\leq N}\E\Big[ \e^{t^{2}d^{2}_{n_{2}}\gamma^{2}_{n_{2}}/\eps}\,\Big]\leq 
\exp{\Big(2t^{2}\,\sum_{|n_{2}|\leq N}{d^{2}_{n_{2}}}/\eps\Big)}\leq \e^{Ct^{2}N},
  \end{equation}
  Finally, from \eqref{P1}, \eqref{P2} and \eqref{CC*} we infer
\begin{equation*}
A_{\lambda,N}
\leq  C\e^{-t\lambda+Ct^{2}N}\leq  C\e^{-c\lambda}\,,
\end{equation*}
for some $c>0$, if $t>0$ is chosen small enough and $N\leq \lambda$.\\[3pt]
Similarly for $\lambda >0$, 
\begin{equation*}
{\bf p}\Big(\,\om \in \Omega \;:\;  \sum_{|n_{1}|,|n_{2}|\leq N}\frac{c_{n_{1}} d_{n_{2}}}{\<n_{1}\>}\,\gamma_{n_{1}}(\om)\,\gamma_{n_{2}}(\om)  <-\lambda\, \Big)\leq  C\e^{-c\lambda}\,,
\end{equation*}
and this yields the result.

\end{proof}

\begin{lemm}\label{ld}
Fix $\s<\frac12$ and $p\in [2,\infty)$. Then
\begin{multline*} 
\exists\, C>0,\exists\, c>0,\,\, \forall\, \lambda\geq 1,\, \forall\, N\geq 1,
\\
\mu\big(\,u\in { H}^{\sigma}\,:\, \|\Pi_{N}u\|_{L^{p}(\T)}>\lambda\,\big)\leq Ce^{-c\lambda^2}\,.
\end{multline*}
Moreover there exists $\beta>0$ such that
\begin{multline*} 
\exists\, C>0,\exists\, c>0,\,\, \forall\, \lambda\geq 1,\, \forall\, M\geq N\geq 1,
\\
\mu\big(\,u\in { H}^{\sigma}\,:\, \|\Pi_{M}u-\Pi_{N}u\|_{L^{p}(\T)}>\lambda\,\big)\leq 
Ce^{-cN^{\beta}\lambda^2}\,.
\end{multline*}
\end{lemm}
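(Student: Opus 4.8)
The plan is to reduce everything to a single moment estimate on the probability space $(\Omega,\mathcal{F},{\bf p})$. Since $\mu$ is the law of the Gaussian random function $\phi$ of \eqref{def.phi}, the $\mu$-measure of $\{u\in H^\s:\|\Pi_N u\|_{L^p(\T)}>\lambda\}$ equals ${\bf p}\big(\om:\|\varphi_N(\om,\cdot)\|_{L^p(\T)}>\lambda\big)$, with $\varphi_N=\Pi_N\phi$ as in \eqref{def.phin}. Note that $\s$ enters only through the requirement that $\mu$ be a genuine measure on $H^\s$; the estimate itself concerns the $L^p(\T)$ norm alone.

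The key point is that for each fixed $x\in\T$ the random variable $\varphi_N(\om,x)=\sum_{|n|\leq N}\frac{g_n(\om)}{\<n\>}\e^{inx}$ is a complex Gaussian, i.e. a Wiener chaos of order one, with
\[ \big\|\varphi_N(\cdot,x)\big\|_{L^2(\Omega)}=\Big(\sum_{|n|\leq N}\frac{1}{\<n\>^2}\Big)^{1/2}\leq C, \]
uniformly in $N$ and $x$. The elementary order-one Gaussian moment bound $\|X\|_{L^q(\Omega)}\lesssim\sqrt{q}\,\|X\|_{L^2(\Omega)}$ (valid for all $q\geq 2$) then gives $\|\varphi_N(\cdot,x)\|_{L^q(\Omega)}\leq C\sqrt{q}$ uniformly in $x$. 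For $q\geq p$ I would apply Minkowski's integral inequality to exchange the two norms,
\[ \big\|\,\|\varphi_N\|_{L^p(\T)}\,\big\|_{L^q(\Omega)}\leq\big\|\,\|\varphi_N(\cdot,x)\|_{L^q(\Omega)}\,\big\|_{L^p_x(\T)}\leq C\sqrt{q}, \]
the last step using that the bound is uniform in $x$ and that $\T$ has finite measure.

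The first estimate then follows from the Tchebychev inequality \eqref{tcheb}: ${\bf p}(\|\varphi_N\|_{L^p}>\lambda)\leq\lambda^{-q}(C\sqrt{q})^q$, and choosing $q=c_0\lambda^2$ with $c_0$ small enough makes $(C\sqrt{q}/\lambda)^q\leq\e^{-c\lambda^2}$. The constraint $q\geq p$ forces $\lambda\geq\sqrt{p/c_0}$, but the complementary range $1\leq\lambda<\sqrt{p/c_0}$ is harmless since the probability is at most $1$ and $\e^{-c\lambda^2}$ is bounded below there, so it suffices to enlarge $C$. For the second estimate the argument is verbatim with $\varphi_N$ replaced by $\Pi_M\phi-\Pi_N\phi=\sum_{N<|n|\leq M}\frac{g_n}{\<n\>}\e^{inx}$; the only change is that now $\sum_{N<|n|\leq M}\<n\>^{-2}\lesssim N^{-1}$, so that $\big\|\,\|\Pi_M\phi-\Pi_N\phi\|_{L^p(\T)}\,\big\|_{L^q(\Omega)}\lesssim\sqrt{q}\,N^{-1/2}$, and optimizing with $q\sim N\lambda^2$ yields the stated bound with $\beta=1$.

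There is no serious obstacle here: the whole mechanism is the matching of the $\sqrt{q}$ growth of Gaussian moments against the Gaussian tail $\e^{-c\lambda^2}$ through Tchebychev, together with the matching of the $N^{-1/2}$ decay of the truncated variance against the gain $N^\beta$. The only points demanding a little care are the hypothesis $q\geq p$ in Minkowski's inequality (hence the bookkeeping at small $\lambda$ and small $N$) and keeping the moment estimate uniform in $x$, so that the spatial integration over $\T$ costs only a constant.
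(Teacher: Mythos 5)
Your proof is correct and follows essentially the same route as the paper, which simply invokes the hypercontractivity bound $\|\sum_n g_n c_n\|_{L^r(\Omega)}\leq C\sqrt{r}\,(\sum_n|c_n|^2)^{1/2}$ and refers to \cite[Lemma 3.3]{BTT} for the details you have written out (Minkowski's inequality to exchange the $L^p_x$ and $L^q_\omega$ norms, then Tchebychev with $q\sim\lambda^2$, respectively $q\sim N\lambda^2$). Your bookkeeping at small $\lambda$ and the identification $\beta=1$ from $\sum_{|n|>N}\<n\>^{-2}\lesssim N^{-1}$ are exactly what the cited argument yields.
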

\begin{proof}
This result is consequence of the hypercontractivity of the Gaussian random variables : There exists $C>0$ such that for all $r\geq 2$ and $(c_{n})\in l^{2}(\N)$
\begin{equation*}
\|\sum_{n\geq 0}g_{n}(\om)\,c_{n}\|_{L^{r}(\Omega)}\leq C\sqrt{r}\Big(\sum _{n\geq 0}|c_{n}|^{2}\Big)^{\frac12}.
\end{equation*}
See e.g. \cite[Lemma 3.3]{BTT} for the details of the proof.
\end{proof}~\\

\subsection{Wiener chaos estimates}~\\

The aim of this subsection is to obtain $L^{p}(\Omega)$ bounds on Gaussian series. These are obtained thanks to the smoothing effects of the Ornstein-Uhlenbeck semi-group. The following considerations are inspired from \cite{Tzvetkov3}. See also \cite{ABCF,Tala.Ledoux} for more details on this topic. \\[5pt]
For $d\geq 1$,  denote by $L$ the operator 
\begin{equation*}
L=\Delta -x\cdot \nabla=\sum_{j=1}^{d}\big(\,\frac{\partial^{2}}{\partial x^{2}_{j}}-x_{j}\,\frac{\partial}{\partial x_{j}}\,\big).
\end{equation*} 
This operator is self adjoint on $\mathcal{K}=L^{2}(\R^{d}, \e^{-|x|^{2}/2}\text{d}x)$ with domain 
$$\mathcal{D}=\Big\{u\;:\;u(x)=\e^{|x|^{2}/4}v(x), \;\;v\in \mathcal{H}^{2}   \Big\},$$
where  $\dis \mathcal{H}^{2}=\big\{u\in L^{2}(\R^{d}),\; x^{\a}\partial^{\beta}_{x}v(x)\in L^{2}(\R^{d}), \;\;\forall\,(\alpha,\beta)\in N^{2d},\;|\alpha|+|\beta|\leq 2\big\}$.
Denote by ${\bf k}=k_{1}+\dots +k_{d}$ and  by $(P_{n})_{n\geq 0}$  the Hermite polynomials defined by 
\begin{equation*} 
P_{n}(x)=(-1)^{n}\e^{x^{2}}\frac{\text{d}^{n}}{\text{d}x^{n}}\big(\,\e^{-x^{2}}\,\big).
\end{equation*}
Then a Hilbertian basis of eigenfunctions of $L$ on $\mathcal{K}$ is given by 
\begin{equation*}
P_{\bf k}(x_{1},\dots, x_{d})=P_{k_{1}}(x_{1})\dots P_{k_{d}}(x_{d}),
\end{equation*} 
with eigenvalue $-{\bf k}=-(k_{1}+\cdots+ k_{d})$.\\
Finally define the measure $\gamma_{d}$ on $\R^{d}$ by 
\begin{equation*}
\text{d}\gamma_{d}(x)=(2\pi)^{-d/2}\e^{-|x|^{2}/2}\text{d}x.
\end{equation*} 
~\\
The next result is a direct consequence of \cite[Proposition 3.1]{Tzvetkov3}. See \cite{ABCF} for the proof.
\begin{lemm}\label{lem.smoo.L}
Let $d\geq 1$ and $k\in \N$. Assume that $\widetilde{P}_{k}$ is an eigenfunction of $L$ with eigenvalue $-k$. Then for all $p\geq 2$
\begin{equation*}
\|\widetilde{P}_{k} \|_{L^{p}(\R^{d},\text{d}\gamma_{d})} \leq (p-1)^{\frac{{ k}}2} \|\widetilde{P}_{k} \|_{L^{2}(\R^{d},\text{d}\gamma_{d})}.
\end{equation*}
\end{lemm}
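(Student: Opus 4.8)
The statement is Nelson's hypercontractivity bound, and I would derive it from Gross's logarithmic Sobolev inequality for the Gaussian measure $\gamma_d$. The central object is the Ornstein--Uhlenbeck (Mehler) semigroup $\e^{tL}$ generated by $L$. Since the $P_{\bf k}$ form a Hilbertian basis of eigenfunctions with $L P_{\bf k}=-{\bf k}\,P_{\bf k}$, the semigroup acts diagonally, $\e^{tL}P_{\bf k}=\e^{-{\bf k}t}P_{\bf k}$; consequently any eigenfunction $\wt{P}_{k}$ with eigenvalue $-k$ satisfies $\e^{tL}\wt{P}_{k}=\e^{-kt}\wt{P}_{k}$. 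The plan is to show that $\e^{tL}$ is a contraction from $L^{2}(\gamma_d)$ to $L^{p}(\gamma_d)$ as soon as $\e^{2t}\geq p-1$, and then to choose $t=\tfrac12\log(p-1)$ so that $\e^{2t}=p-1$. This gives
\[
\|\wt{P}_{k}\|_{L^{p}(\gamma_d)}=\e^{kt}\|\e^{tL}\wt{P}_{k}\|_{L^{p}(\gamma_d)}\leq \e^{kt}\|\wt{P}_{k}\|_{L^{2}(\gamma_d)}=(p-1)^{k/2}\|\wt{P}_{k}\|_{L^{2}(\gamma_d)},
\]
which is exactly the claimed inequality.

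To establish the contraction property I would use Gross's differential method. Starting from a positive smooth $f$, set $u(t,\cdot)=\e^{tL}f$ and let $q(t)$ be an increasing exponent with $q(0)=2$. Writing $v=u^{q/2}$ and $N(t)=\|u(t)\|_{L^{q(t)}(\gamma_d)}$, a direct computation using $\p_t u=Lu$, the integration-by-parts identity $\int u^{q-1}Lu\,\dd\gamma_d=-(q-1)\int u^{q-2}|\nabla u|^{2}\,\dd\gamma_d$, and the relation $u^{q-2}|\nabla u|^{2}=\tfrac{4}{q^{2}}|\nabla v|^{2}$ yields
\[
\frac{\dd}{\dd t}\log N(t)=\frac{1}{q^{2}E}\Big(q'\,\mathrm{Ent}(v^{2})-4(q-1)\int|\nabla v|^{2}\,\dd\gamma_d\Big),
\]
where $E=\int v^{2}\,\dd\gamma_d$ and $\mathrm{Ent}(v^{2})=\int v^{2}\log v^{2}\,\dd\gamma_d-E\log E$.

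At this point the Gaussian logarithmic Sobolev inequality $\mathrm{Ent}(v^{2})\leq 2\int|\nabla v|^{2}\,\dd\gamma_d$ enters: substituting it bounds the right-hand side by $\tfrac{2}{q^{2}E}\big(q'-2(q-1)\big)\int|\nabla v|^{2}$, which is $\leq 0$ precisely when $q'\leq 2(q-1)$. Choosing $q(t)=1+\e^{2t}$ makes $N(t)$ non-increasing, so $\|\e^{tL}f\|_{L^{q(t)}}\leq\|f\|_{L^{2}}$ with $q(t)-1=\e^{2t}$, i.e. the announced contraction; density and an approximation argument then remove the positivity and smoothness hypotheses on $f$.

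The only genuinely nontrivial input, and hence the main obstacle, is the logarithmic Sobolev inequality itself. I would obtain it either from the Bakry--\'Emery $\Gamma_{2}$-criterion, using that the generator $L$ of the invariant measure $\gamma_d$ satisfies the curvature lower bound $\Gamma_{2}\geq \Gamma$ (equivalently a $CD(1,\infty)$ condition), or, more elementarily, by tensorization from dimension one, which reduces the matter to Bonami's two-point inequality combined with the central limit theorem. Everything downstream of the logarithmic Sobolev inequality is the routine semigroup computation sketched above.
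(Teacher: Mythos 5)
Your proof is correct and follows exactly the route the paper intends: the paper gives no argument of its own but defers to \cite[Proposition 3.1]{Tzvetkov3} and to \cite{ABCF}, where the bound is obtained precisely as Nelson's hypercontractivity of the Ornstein--Uhlenbeck semigroup via Gross's logarithmic Sobolev inequality, applied to an eigenfunction with $\e^{tL}\widetilde{P}_k=\e^{-kt}\widetilde{P}_k$ and $\e^{2t}=p-1$. Nothing further is needed.
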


 Thanks to Lemma \ref{lem.smoo.L}, we will prove the following   $L^{p}$ smoothing effect for some stochastic series.~\\
\begin{prop}[Wiener chaos]\label{prop.wiener}~\\
Let $d\geq 1$ and $c(n_{1},\dots ,n_{k}) \in \C$. Let $(g_{n})_{1\leq n\leq d}\in\mathcal{N}_{\C}(0,1)$ be complex $L^{2}$-normalised independent Gaussians.\\
 For $k\geq 1$ denote by $A(k,d)=\big\{(n_{1},\dots,n_{k})\in \{1,\dots,d\}^{k},\,n_{1}\leq \cdots \leq n_{k}\big\}$ and 
\begin{equation}\label{Sk}
S_{k}(\om)=\sum_{A(k,d) }c(n_{1},\dots ,n_{k})\,g_{n_{1}}(\om)\cdots g_{n_{k}}(\om).
\end{equation}
Then for all $d\geq 1$ and $p\geq 2$
\begin{equation*}
\|S_{k}\|_{L^{p}(\Omega)}\leq \sqrt{k+1}\,(p-1)^{\frac{k}2}\,\|S_{k}\|_{L^{2}(\Omega)}.
\end{equation*}
\end{prop}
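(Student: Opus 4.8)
The plan is to realise $S_k$ as a polynomial of degree $k$ in finitely many real Gaussians, to expand it along the eigenspaces of the operator $L$, and then to apply the smoothing estimate of Lemma~\ref{lem.smoo.L} to each homogeneous component, summing up with the triangle and Cauchy--Schwarz inequalities.

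First I would pass to real variables. Using \eqref{def.gn} we write $g_n=\frac1{\sqrt2}(h_n+il_n)$, where $(h_n,l_n)_{1\leq n\leq d}$ is a family of $2d$ independent standard real Gaussians whose joint law is exactly $\gamma_{2d}$. Inserting this in \eqref{Sk}, the random variable $S_k(\om)$ becomes a homogeneous polynomial of degree $k$ in the variables $(h_n,l_n)_{1\leq n\leq d}$ (each factor $g_{n_i}$ is homogeneous of degree one, and there are $k$ of them). Thus $S_k$ may be identified with a function $P\in L^2(\R^{2d},\dd\gamma_{2d})$, and since the pushforward of ${\bf p}$ under $\om\mapsto (h_n(\om),l_n(\om))$ is $\gamma_{2d}$, we have for every $p\geq 2$
$$
\|S_k\|_{L^p(\Omega)}=\|P\|_{L^p(\R^{2d},\,\dd\gamma_{2d})}.
$$

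Next I would decompose $P$ along the eigenspaces of $L$ on $\mathcal K=L^2(\R^{2d},\dd\gamma_{2d})$. Denoting by $\Pi_j$ the orthogonal projector onto the eigenspace associated with the eigenvalue $-j$ (spanned by the products of Hermite polynomials of total degree $j$), and setting $T_j=\Pi_j P$, the crucial observation is that this eigenspace consists precisely of polynomials of degree $j$; since $P$ has degree $k$, this forces $T_j=0$ for $j>k$, so that
$$
P=\sum_{j=0}^{k}T_j.
$$
This degree-counting — equivalently, that the Wiener chaos decomposition of a degree-$k$ polynomial is supported on orders $0,\dots,k$, even though a single monomial $g_{n_1}\cdots g_{n_k}$ with repeated indices is not itself a pure chaos — is the only genuinely structural point, and it is exactly what limits the number of surviving terms to $k+1$.

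Finally, each $T_j$ is an eigenfunction of $L$ with eigenvalue $-j$, so Lemma~\ref{lem.smoo.L} applied in dimension $2d$ gives $\|T_j\|_{L^p}\leq (p-1)^{j/2}\|T_j\|_{L^2}$, the bound being insensitive to $T_j$ being complex-valued. As $p\geq 2$ yields $p-1\geq 1$, we have $(p-1)^{j/2}\leq (p-1)^{k/2}$ for $0\leq j\leq k$, and combining the triangle inequality, the Cauchy--Schwarz inequality over the $k+1$ indices $j\in\{0,\dots,k\}$, and the $L^2$-orthogonality of the eigenspaces, I obtain
$$
\|P\|_{L^p}\leq \sum_{j=0}^{k}\|T_j\|_{L^p}\leq (p-1)^{k/2}\sum_{j=0}^{k}\|T_j\|_{L^2}\leq (p-1)^{k/2}\sqrt{k+1}\,\Big(\sum_{j=0}^{k}\|T_j\|_{L^2}^2\Big)^{1/2}.
$$
Since $\sum_{j=0}^{k}\|T_j\|_{L^2}^2=\|P\|_{L^2}^2=\|S_k\|_{L^2(\Omega)}^2$, this is exactly the claimed estimate. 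The main obstacle is thus the verification that only $T_0,\dots,T_k$ survive; once granted, the integrability gain $(p-1)^{k/2}$ and the factor $\sqrt{k+1}$ follow immediately from Lemma~\ref{lem.smoo.L} and orthogonality.
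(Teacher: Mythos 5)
Your proof is correct and follows essentially the same route as the paper: reduce to $2d$ real Gaussians, decompose the degree-$k$ polynomial into its $k+1$ Hermite (chaos) components, apply Lemma~\ref{lem.smoo.L} to each, and conclude by the triangle inequality, Cauchy--Schwarz over the $k+1$ indices, and $L^2$-orthogonality. The only (immaterial) difference is that the paper exhibits the decomposition explicitly by expanding each monomial $x_{m_1}^{p_1}\cdots x_{m_l}^{p_l}$ in Hermite polynomials, whereas you obtain it abstractly via the spectral projectors and the observation that a degree-$k$ polynomial is orthogonal to all chaoses of order greater than $k$.
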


\begin{proof}
Let $g_{n}\in\mathcal{N}_{\C}(0,1)$. Then we can write $g_{n}=\frac1{\sqrt{2}}(\gamma_{n}+i\,\widetilde{\gamma}_{n})$ with $\gamma_{n},\widetilde{\gamma}_{n} \in \mathcal{N}_{\R}(0,1)$ mutually independent Gaussians. Hence, up to a change of indexes (and with $d$ replaced with $2d$) we can assume that the random variables in \eqref{Sk} are real valued. Thus in the following we assume that $g_{n}\in\mathcal{N}_{\R}(0,1)$ and are independent.\\[2pt]
Denote by 
\begin{equation*}
\Sigma_{k}(x_{1},\dots,x_{d})=\sum_{A(k,d) }c(n_{1},\dots ,n_{k})\,x_{n_{1}}\cdots x_{n_{k}}.
\end{equation*}
Then obviously for all $p\geq 1$, 
\begin{equation}\label{egalite}
\|S_{k}\|_{L^{p}(\Omega)}=\|\Sigma_{k}\|_{L^{p}(\R^{d}, \text{d}\gamma_{d})}.
\end{equation}
Let $(n_{1},\dots ,n_{k}) \in A(k,d)$. Then we can write 
\begin{equation*}
x_{n_{1}}\cdots x_{n_{k}}=x^{p_{1}}_{m_{1}}\,\cdots \,x^{p_{l}}_{m_{l}},
\end{equation*}
where $l\leq k$, $p_{1}+\cdots+p_{l}=k$ and $n_{1}=m_{1}<\cdots<m_{l}\leq n_{k}$. Now, each monomial $x^{p_{j}}_{m_{j}}$ can be expanded on the Hermite polynomials $(P_{n})_{n\geq 0}$
\begin{equation*}
x^{p_{j}}_{m_{j}}=\sum_{k_{j}=0}^{p_{j}}\a_{j,k_{j}}\,P_{k_{j}}(x_{m_{j}}).
\end{equation*}
Therefore there exists $\beta(k_{1},\dots,k_{l})\in \C$ so that 
\begin{equation*}
x_{n_{1}}\cdots x_{n_{k}}=\sum_{j=0}^{k}\;\sum_{\substack{k_{1}+\cdots+k_{l}=j\\ 0\leq k_{i}\leq p_{i}}}\beta(k_{1},\dots,k_{l})P_{k_{1}}(x_{m_{1}})\cdots P_{k_{l}}(x_{m_{l}}),
\end{equation*}
and we have 
\begin{equation}\label{sigma_{k}}
\Sigma_{k}(x_{1},\dots,x_{d})\\
=\sum_{j=0}^{k}\widetilde{P}_{j}(x_1,\dots,x_d),
\end{equation}
where the polynomial $\widetilde{P}_{j}$ is given by 
\begin{multline*}
\widetilde{P}_{j}(x_1,\dots,x_d)\\
=\sum_{A(k,d) }\,\sum_{\substack{k_{1}+\cdots+k_{l}=j\\ 0\leq k_{i}\leq p_{i}} }c(n_{1},\dots ,n_{k})\beta(k_{1},\dots,k_{l})P_{k_{1}}(x_{m_{1}})\cdots P_{k_{l}}(x_{m_{l}}).
\end{multline*}
For $0\leq k_{i}\leq p_{i}$ so that $k_{1}+\cdots+k_{l}=j$, the polynomial $\widetilde{P}_{j}$ is an eigenfunction of $L$ with eigenvalue $-j$, hence by Lemma \ref{lem.smoo.L} we have that for all $p\geq 2$
\begin{equation*}
\|\widetilde{P}_{j}\|_{L^{p}(\R^{d},\text{d}\gamma_{d})}\leq (p-1)^{\frac{j}2}\,\|\widetilde{P}_{j}\|_{L^{2}(\R^{d},\text{d}\gamma_{d})}.
\end{equation*}
Therefore, by \eqref{sigma_{k}} and by the Cauchy-Schwarz inequality,
\begin{eqnarray*}
\|\Sigma_{k}\|_{L^{p}(\R^{d},\text{d}\gamma_{d})}&\leq &(p-1)^{\frac{k}2}\;\sum_{j=0}^{k}\|\widetilde{P}_{j}\|_{L^{2}(\R^{d},\text{d}\gamma_{d})}\\
&\leq &\sqrt{k+1}\,(p-1)^{\frac{k}2}\;\Big(\sum_{j=0}^{k}\|\widetilde{P}_{j}\|^{2}_{L^{2}(\R^{d},\text{d}\gamma_{d})}\Big)^{\frac12}\\
&\leq &\sqrt{k+1}\,(p-1)^{\frac{k}2}\;\|\Sigma_{k}\|_{L^{2}(\R^{d},\text{d}\gamma_{d})},
\end{eqnarray*}
where in the last line we used that the polynomials $\widetilde{P}_{j}$ are orthogonal. This concludes the proof  by \eqref{egalite}
\end{proof}

\noindent We will need the following lemma which is proved in \cite[Lemma 4.5]{Tzvetkov3}
\begin{lemm}\label{lem.exp}
Let $F\,:\, H^{\s}(\T)\longrightarrow \R$ be a measurable function. Assume that there exist $\a>0, N>0, k\geq 1$, and $C>0$ so that for every $p\geq 2$
\begin{equation*}
\|F\|_{L^{p}(\text{d}\mu)}\leq CN^{-\a}\,p^{\frac{k}2}.
\end{equation*}
Then there exist $\delta>0, C_{1}$ independent of $N$ and $\a$ such that
\begin{equation*}
\int_{H^{\s}(\T)}\e^{\delta N^{\frac{2\a}{k}}|F(u)|^{\frac2k}}\text{d}\mu(u)\leq C_{1}.
\end{equation*}
As a consequence, for all $\lambda>0$,
\begin{equation*}
\mu\big(\,u\in { H}^{\sigma}(\T)\,:\, |F(u)|>\lambda\,\big)\leq C_{1}e^{-\delta N^{\frac{2\a}{k}}\lambda^{\frac2k}}.
\end{equation*}
\end{lemm}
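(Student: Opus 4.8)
The plan is to pass from the polynomial moment bound on $F$ to exponential integrability by Taylor-expanding the exponential and estimating each moment through the hypothesis. Writing $\theta=2/k$, I would expand
\begin{equation*}
\int_{H^{\s}(\T)}\e^{\delta N^{2\a/k}|F(u)|^{\theta}}\dd\mu(u)=\sum_{m\geq 0}\frac{\delta^{m}N^{2\a m/k}}{m!}\int_{H^{\s}(\T)}|F(u)|^{\theta m}\dd\mu(u),
\end{equation*}
so that everything reduces to controlling $\int|F|^{\theta m}\dd\mu=\|F\|_{L^{\theta m}(\dd\mu)}^{\theta m}$ with $\theta m=2m/k$.

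For the bulk of the sum, namely $m\geq k$, we have $2m/k\geq 2$ and the hypothesis applies with $p=2m/k$, giving $\|F\|_{L^{2m/k}}^{2m/k}\leq C^{2m/k}N^{-2\a m/k}(2m/k)^{m}$. The decisive point is that the factor $N^{-2\a m/k}$ exactly cancels the $N^{2\a m/k}$ produced by the weight, so the general term becomes $\delta^{m}C^{2m/k}(2m/k)^{m}/m!$, with no dependence on $N$ or $\a$. Applying the Stirling-type lower bound $m!\geq (m/\e)^{m}$ bounds this term by $(2\e\delta C^{2/k}/k)^{m}$, a geometric series which converges as soon as $\delta$ is chosen small enough (depending only on $k$ and $C$). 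The finitely many remaining terms $0\leq m<k$ correspond to exponents $2m/k<2$, for which the hypothesis is not directly available; here I would simply use that $\mu$ is a probability measure, so $\|F\|_{L^{2m/k}}\leq \|F\|_{L^{2}}$, and then invoke the hypothesis at $p=2$. These contribute a bounded constant, again independent of $N$ and $\a$. Summing the two pieces yields the claimed uniform bound $C_{1}$.

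The tail estimate is then immediate from the Chebyshev inequality:
\begin{multline*}
\mu\big(|F|>\lambda\big)=\mu\big(\e^{\delta N^{2\a/k}|F|^{2/k}}>\e^{\delta N^{2\a/k}\lambda^{2/k}}\big)\\
\leq \e^{-\delta N^{2\a/k}\lambda^{2/k}}\int_{H^{\s}(\T)}\e^{\delta N^{2\a/k}|F|^{2/k}}\dd\mu\leq C_{1}\e^{-\delta N^{2\a/k}\lambda^{2/k}}.
\end{multline*}

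I expect the only genuine subtlety — and hence the main point to get right — to be the bookkeeping that makes $C_{1}$ independent of both $N$ and $\a$: this hinges entirely on the exact cancellation $N^{2\a m/k}\cdot N^{-2\a m/k}=1$, which comes from matching the exponent $2\a/k$ in the Gaussian-type weight to the rate $N^{-\a}$ in the moment bound. Getting the threshold $m=k$ right (so as to respect the constraint $p\geq 2$ in the hypothesis) and choosing $\delta$ small enough to beat the factorial via Stirling are the remaining points, both routine.
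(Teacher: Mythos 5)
Your argument is correct: the interchange of sum and integral is justified by positivity, the cancellation $N^{2\a m/k}\cdot N^{-2\a m/k}=1$ is exactly what makes $C_{1}$ uniform in $N$ and $\a$, the low-order terms $0\leq m<k$ (where $2m/k<2$) are legitimately handled by $\|F\|_{L^{2m/k}}\leq\|F\|_{L^{2}}$ on the probability space $\mu$, and Stirling's bound $m!\geq (m/\e)^{m}$ reduces the tail of the series to a geometric one for $\delta$ small depending only on $C$ and $k$. Note, however, that the paper does not prove this lemma at all; it simply cites Lemma 4.5 of Tzvetkov's Benjamin--Ono paper. The proof there runs in the opposite direction to yours: one first derives the tail estimate by optimizing the exponent in Chebyshev's inequality, writing $\mu(|F|>\lambda)\leq \lambda^{-p}\|F\|_{L^{p}}^{p}\leq \big(CN^{-\a}p^{k/2}/\lambda\big)^{p}$ and choosing $p\sim (N^{\a}\lambda/(C\e))^{2/k}$ (with the trivial bound for the finitely many small $\lambda$ where this $p$ falls below $2$), which yields $\mu(|F|>\lambda)\leq C\e^{-\delta(N^{\a}\lambda)^{2/k}}$ directly; the exponential integrability then follows from this tail bound by the layer-cake formula. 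Your series-expansion route delivers the exponential moment first and the tail bound as a corollary of Chebyshev; the two approaches are essentially equivalent in content and in the constants they produce, yours being perhaps slightly cleaner in that it avoids the case discussion at small $\lambda$, while the reference's is the more common formulation when only the deviation estimate is needed.
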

~
\section{\texorpdfstring{Study of the sequence $\big(f_{N}(u)\big)_{N\geq 1}$}{Study of the sequence fn}}\label{Section3}~

\noindent Recall that $f_{N}(u)$ is defined by 
$\dis f_{N}(u)=\im \int_{\T}\ov{u_{N}^{2}(x)}\,\partial_{x}(u_{N}^{2}(x))\text{d}x.$\\
The main result of this  section  is the following
\begin{prop}\label{prop.cauchy}
The sequence $\big(f_{N}\big)_{N\geq 1}$ is a Cauchy sequence in $L^{2}\big(H^{\s}(\T), \mathcal{B},\text{d}\mu\big)$. Indeed for all $0<\eps<\frac12$ there exists $C>0$ so that for all $M>N\geq 1$
\begin{equation}\label{L2}
\|f_{M}(u)-f_{N}(u)\|_{L^{2}\big(H^{\s}(\T), \mathcal{B},\text{d}\mu\big)}\leq \frac{C}{N^{\frac32-\eps}}.
\end{equation} 
Moreover, for all $p\geq 2$ and  $M>N\geq 1$
\begin{equation}\label{LP}
\|f_{M}(u)-f_{N}(u)\|_{L^{p}\big(H^{\s}(\T), \mathcal{B},\text{d}\mu\big)}\leq  \frac{C\,(p-1)^{2}}{N^{\frac32-\eps}}.
\end{equation} 
\end{prop}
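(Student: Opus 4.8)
\emph{Approach.} The plan is to turn $f_N$ into an explicit multilinear expression in the Gaussians $(g_n)$ and then to combine the orthogonality of the Wiener chaoses with the hypercontractive bound of Proposition \ref{prop.wiener}. First I would evaluate $f_N$ on the random series \eqref{def.phi}. Setting $b_p(\om)=\widehat{u_N^2}(p)=\sum_{n_1+n_2=p,\,|n_i|\le N}\frac{g_{n_1}g_{n_2}}{\<n_1\>\<n_2\>}$ and using Parseval, a short computation gives
\[
f_N=\sum_{p}p\,|b_p|^2=\sum_{\substack{n_1+n_2=m_1+m_2\\ |n_i|,|m_i|\le N}}(n_1+n_2)\,\frac{g_{n_1}g_{n_2}\ov{g_{m_1}}\,\ov{g_{m_2}}}{\<n_1\>\<n_2\>\<m_1\>\<m_2\>},
\]
a real quartic polynomial in the $g_n$ supported on the resonant set $n_1+n_2=m_1+m_2$ and carrying the \emph{odd} multiplier $n_1+n_2$, whereas the weights $\<n\>^{-2}$ are \emph{even}. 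The difference $f_M-f_N$ is the same sum restricted to multi-indices having at least one frequency of modulus in $(N,M]$.

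\emph{Chaos decomposition and cancellation.} Next I would expand each product $g_{n_1}g_{n_2}\ov{g_{m_1}}\,\ov{g_{m_2}}$ into Wick-ordered components: a homogeneous chaos of order $4$, the order-$2$ pieces coming from the single contractions (each of which, because of the constraint $n_1+n_2=m_1+m_2$, collapses two of the frequencies), and a constant. Pairing the odd symbol $n_1+n_2$ against the even, symmetric frequency weights makes the constant term (that is, $\E[f_N]$) vanish, and more generally produces the cancellations that the introduction refers to as the ``adapted decomposition''; it is precisely this mechanism which allows one to dispense with the a.s.\ infinite $H^{1/2}$ norm.

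\emph{The $L^2$ estimate (the crux).} I would then compute $\|f_M-f_N\|_{L^2(\dd\mu)}^2=\E\big[(f_M-f_N)^2\big]$ via the Wick/Isserlis formula. By orthogonality of distinct chaoses this is the sum of the squared $L^2$-norms of the order-$2$ and order-$4$ parts, and each reduces to an explicit frequency sum; for the top order it is of the type
\[
\sum_{\substack{n_1+n_2=m_1+m_2\\ \max(|n_i|,|m_i|)>N}}\frac{(n_1+n_2)^2}{\<n_1\>^2\<n_2\>^2\<m_1\>^2\<m_2\>^2}.
\]
I would eliminate one summation variable using the single linear constraint, bound the resulting convolutions by $\sum_n\<n\>^{-2}\<p-n\>^{-2}\lessim\<p\>^{-2+\eps}$, and then use the restriction that some frequency exceeds $N$ to trade a fraction of this decay for a negative power of $N$, arriving at \eqref{L2}. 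Extracting the stated power $N^{-(3/2-\eps)}$ from this bookkeeping, while simultaneously tracking the cancellations from the odd symbol in the lower-order contributions, is the step I expect to be the main obstacle.

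\emph{The $L^p$ estimate.} Finally, since $f_M-f_N$ is a finite sum of Wiener chaoses of order at most $4$, I would apply Proposition \ref{prop.wiener} order by order, the top order governing the growth in $p$, to obtain $\|f_M-f_N\|_{L^p(\dd\mu)}\le C\,(p-1)^2\,\|f_M-f_N\|_{L^2(\dd\mu)}$ for every $p\ge 2$; here the exponent $(p-1)^2=(p-1)^{4/2}$ reflects exactly the degree $4$ of the polynomial. Combined with \eqref{L2} this yields \eqref{LP} and completes the proof.
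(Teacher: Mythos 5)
Your architecture is the paper's own: the paper likewise writes $f_N$ as the quartic Gaussian sum over $m_{1}+m_{2}=n_{1}+n_{2}$, separates the contracted configurations $\{m_{1},m_{2}\}=\{n_{1},n_{2}\}$ (its $S^{1}_{N}$) from the rest ($S^{2}_{N}$), Wick-orders $S^{1}_{N}$ via $|g_{n}|^{2}=(|g_{n}|^{2}-1)+1$ into fourth-chaos, second-chaos and constant pieces, kills the constant by the $n\mapsto -n$ symmetry, estimates each piece in $L^{2}(\Omega)$ by independence and a convolution bound (Lemma \ref{lem.somme}), and finally deduces \eqref{LP} from Proposition \ref{prop.wiener} with $k=4$, exactly as you propose. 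So there is no disagreement of method; the issue is that the step you explicitly set aside as ``the main obstacle'' is the entire content of the proposition, and it is not a routine bookkeeping step.

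Carry out the second-chaos computation you describe. Contracting one $g$ with one $\ov{g}$ forces, through the constraint $m_{1}+m_{2}=n_{1}+n_{2}$, a complete pairing of the quadruple, and after the cancellation $\sum_{|k|\le N}k\<k\>^{-2}=0$ one obtains the exact identity
\begin{equation*}
f_{N}^{(2)}=4\,s_{N}\sum_{|n|\leq N}\frac{n\,|g_{n}(\om)|^{2}}{\<n\>^{2}},
\qquad s_{N}=\sum_{|k|\leq N}\frac{1}{\<k\>^{2}},
\end{equation*}
that is, the second chaos of $f_{N}$ is $4s_{N}\,\im \int_{\T}\ov{u_{N}}\,\p_{x}u_{N}\,\text{d}x$ (you can check this by hand for $N=1$: $f_{1}^{(2)}=4(|g_{1}|^{2}-|g_{-1}|^{2})$). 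Since $s_{N}$ tends to a nonzero constant, the increment $f_{M}^{(2)}-f_{N}^{(2)}$ contains $4s_{M}\sum_{N<|n|\leq M}n|g_{n}|^{2}\<n\>^{-2}$, a sum of independent mean-zero terms whose variance is $\asymp\sum_{N<|n|\leq M}n^{2}\<n\>^{-4}\asymp N^{-1}$; by orthogonality of the chaoses this forces $\|f_{M}-f_{N}\|_{L^{2}(\text{d}\mu)}\gtrsim N^{-1/2}$ for $M\geq 2N$, which is incompatible with the rate $N^{-(3/2-\eps)}$ in \eqref{L2}. (A contribution of the same size also arises in the fourth chaos from configurations where $n_{1}+n_{2}$ is large but some individual frequency is small.) The point at which this diverges from the paper's argument is \eqref{ineqC1}, \eqref{ineq1} and the definition of $D_{M,N}$: there the difference $Y^{j}_{M}-Y^{j}_{N}$, resp.\ $S^{2}_{M}-S^{2}_{N}$, is summed only over quadruples \emph{all} of whose frequencies exceed $N$, whereas the true difference set is ``at least one frequency in $(N,M]$'', and it is precisely the discarded mixed configurations that carry the $N^{-1/2}$. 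So your plan, executed honestly, proves that $(f_{N})$ is Cauchy and yields \eqref{LP} with $N^{-1/2}$ in place of $N^{-(3/2-\eps)}$ --- enough to define $f$ and to get convergence in measure, but not the stated rate, which is what is used downstream (Corollary \ref{lem.dist} with $\a>1$ in the proof of Proposition \ref{prop.lp}). To recover a rate better than $N^{-1}$ you would have to split off the momentum-type term $4s_{N}\im\int_{\T}\ov{u_{N}}\p_{x}u_{N}$ (and the large-$(n_{1}+n_{2})$ fourth-chaos piece) and treat them by a separate argument, e.g.\ a direct large-deviation bound in the spirit of Lemma \ref{lem.ponctuel}.
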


\noindent Then a combination of  the estimate \eqref{LP} and Lemma \ref{lem.exp} yields the following large deviation estimate
\begin{coro}\label{lem.dist}
For every $\a<\frac32$, there exist $C,\delta>0$ such that for all $M>N\geq 1$ and $\lambda>0$
\begin{equation*}
\mu\big(\,u\in { H}^{\sigma}(\T)\,:\, |f_{M}(u)-f_{N}(u)|>\lambda\,\big)\leq Ce^{-\delta (N^{\a}\lambda)^{\frac12}}.
\end{equation*}
\end{coro}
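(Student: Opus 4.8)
The plan is to derive Corollary \ref{lem.dist} directly from the $L^p$ estimate \eqref{LP} of Proposition \ref{prop.cauchy} by invoking Lemma \ref{lem.exp}. First I would fix $M>N\geq 1$ and set $F(u)=f_M(u)-f_N(u)$; this $F$ is a measurable function on $H^\s(\T)$. The bound \eqref{LP} reads $\|F\|_{L^p(\dd\mu)}\leq C(p-1)^2 N^{-(3/2-\eps)}$ for every $p\geq 2$. To match the hypothesis of Lemma \ref{lem.exp}, which requires a bound of the form $CN^{-\a}p^{k/2}$, I would note that $(p-1)^2\leq p^2$, so that $\|F\|_{L^p(\dd\mu)}\leq C N^{-(3/2-\eps)}p^2$. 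Thus the hypothesis of Lemma \ref{lem.exp} is met with exponent $\a=\tfrac32-\eps$ (which is the $\a$ appearing in the corollary once we absorb $\eps$) and with $k=4$, since $p^2=p^{k/2}$ forces $k=4$.

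With these values, Lemma \ref{lem.exp} gives directly
\begin{equation*}
\mu\big(\,u\in H^\s(\T)\,:\,|F(u)|>\lambda\,\big)\leq C_1 e^{-\delta N^{\frac{2\a}{k}}\lambda^{\frac2k}}=C_1 e^{-\delta N^{(3/2-\eps)/2}\lambda^{1/2}}.
\end{equation*}
Since $(3/2-\eps)/2=3/4-\eps/2$, a small technical point is that this exponent on $N$ is only $3/4-\eps/2$, whereas the corollary claims the quantity $(N^\a\lambda)^{1/2}=N^{\a/2}\lambda^{1/2}$ in the exponent for any $\a<\tfrac32$. These match precisely: writing $\a=\tfrac32-\eps$ we get $N^{\a/2}=N^{3/4-\eps/2}$, in agreement with the output of Lemma \ref{lem.exp}. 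So I would carefully align the naming of constants so that the $\a<\tfrac32$ of the corollary is exactly the $\tfrac32-\eps$ of \eqref{LP}, with $\eps=\tfrac32-\a>0$, and the constants $C,\delta$ are those furnished by Lemma \ref{lem.exp}.

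I do not expect any genuine obstacle here, as the corollary is essentially a formal consequence of the quantitative $L^p$ bound combined with the abstract Chebyshev-type estimate packaged in Lemma \ref{lem.exp}. The only points demanding slight care are: first, replacing $(p-1)^2$ by $p^2$ to fit the stated $p^{k/2}$ form (harmless, with $k=4$); second, checking that the restriction $p\geq 2$ in \eqref{LP} is exactly the hypothesis range of Lemma \ref{lem.exp}; and third, tracking the precise relationship between the $\eps$ in \eqref{LP} and the $\a<\tfrac32$ in the corollary so that the decay exponents coincide. Once these bookkeeping matters are settled, the inequality follows immediately from Lemma \ref{lem.exp} applied to $F=f_M-f_N$.
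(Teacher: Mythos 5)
Your proposal is correct and is exactly the paper's argument: the corollary is obtained by combining the $L^p$ bound \eqref{LP} with Lemma \ref{lem.exp}, taking $k=4$ (after bounding $(p-1)^2\le p^2$) and the lemma's $\a$ equal to $\tfrac32-\eps$, precisely as you describe. The one bookkeeping detail worth adding is that \eqref{LP} is stated for $0<\eps<\tfrac12$, so the identification $\eps=\tfrac32-\a$ only covers $\a\in(1,\tfrac32)$; for $\a\le 1$ the claimed bound follows from the case of a larger $\a$, since $N\ge 1$ makes the right-hand side monotone in $\a$.
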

Thanks to Proposition \ref{prop.cauchy}, we are able to define the limit in $L^{2}(\Omega)$ of the sequence 
$\big(f_{N}\big)_{N\geq 1}$, which will be denoted by
\begin{equation}\label{def.f} 
f(u)=\im \int_{\T}\ov{u^{2}(x)}\,\partial_{x}(u^{2}(x))\text{d}x.
\end{equation} 
This gives a sense to the r.h.s. of \eqref{def.f} for $u$ in the support of $\mu$.

Notice that Corollary \ref{lem.dist} implies in particular the convergence in measure
\begin{equation}\label{CVM}
\forall\,\eps>0,\;\; \lim_{N\to\infty}\mu\Big(u\in {\mathcal H}^{-\sigma}\,:\,
\big|f_{N}(u)-f(u)\big|>\eps)=0.
\end{equation}

 For the proof of Proposition \ref{prop.cauchy}, we have to put $\dis \int_{\T}\ov{\phi_{N}^{2}(\om)}\,\partial_{x}(\phi_{N}^{2}(\om))\text{d}x$ in a suitable form.\\
\noindent Recall the notation \eqref{def.phin}, then 
\begin{equation}\label{eq.phi}
\phi^{2}_{N}(\om)=\sum_{|n_{1}|,|n_{2}|\leq N}\frac{g_{n_{1}}(\om)\,g_{n_{2}}(\om)}{\<n_{1}\>\<n_{2}\>}\e^{i(n_{1}+n_{2})x}.
\end{equation} 
Therefore we deduce that 
\begin{equation}\label{eq.dphi}
\p_{x}\big(\phi^{2}_{N}(\om)\big)=\sum_{|m_{1}|,|m_{2}|\leq N}i(m_{1}+m_{2})\frac{g_{m_{1}}(\om)\;g_{m_{2}}(\om)}{\<m_{1}\>\<m_{2}\>}\e^{i(m_{1}+m_{2})x}.
\end{equation}
Now, by \eqref{eq.phi}, \eqref{eq.dphi} and the fact that  $\big(\e^{inx}\big)_{n\in \Z}$ is an orthonormal family in $L^{2}(\T)$ (endowed with the scalar product $\dis \<f,g\>=\int_{\T}f(x)\ov{g}(x)\text{d}x=\frac1{2\pi}\int_{0}^{2\pi}f(x)\ov{g}(x)\text{d}x$), we obtain
 
\begin{equation}\label{eq.sum}
\int_{\T}\ov{\phi_{N}^{2}(\om)}\,\partial_{x}(\phi_{N}^{2}(\om))\text{d}x=\sum_{A_{N}}i(n_{1}+n_{2})\, \frac{{g_{m_{1}}(\om)}\;{g_{m_{2}}(\om)}\;\ov{{g_{n_{1}}(\om)}}\;\ov{{g_{n_{2}}(\om)}}}{\<m_{1}\>\,\<m_{2}\>\,\<n_{1}\>\,\<n_{2}\>},
\end{equation} 
where 
\begin{equation*}
A_{N}=\{(m_{1},m_{2},n_{1},n_{2})\in \Z^{4}\;\;\text{s.t.}\;\; |m_{1}|, |m_{2}|,  |n_{1}|, |n_{2}| \leq N\;\;\text{and}\;\; m_{1}+m_{2}=n_{1}+n_{2}\}.
\end{equation*}

\noindent We now split the sum \eqref{eq.sum} in two parts, by distinguishing the cases $m_{1}=n_{1}$ and $m_{1}\neq n_{1}$ in $A_{N}$ and write 
\begin{equation*}
\int_{\T}\ov{\phi_{N}^{2}(\om)}\,\partial_{x}(\phi_{N}^{2}(\om))\text{d}x=S^{1}_{N}+S^{2}_{N},
\end{equation*} 
with 
\begin{equation}\label{sn1}
S^{1}_{N}=\sum_{B_{N}}i(n_{1}+n_{2})\, \frac{{g_{m_{1}}(\om)}\;{g_{m_{2}}(\om)}\;\ov{{g_{n_{1}}(\om)}}\;\ov{{g_{n_{2}}(\om)}}}{\<m_{1}\>\,\<m_{2}\>\,\<n_{1}\>\,\<n_{2}\>},
\end{equation} 
where $B_{N}=A_{N}\cap\{\,m_{1}=n_{1}\;\;\text{or}\;\;m_{1}=n_{2}\,\}$, and
\begin{equation}\label{sn2}
S^{2}_{N}=\sum_{\substack{A_{N}, m_{1}\neq n_{1}\\m_{1}\neq n_{2}}}i(n_{1}+n_{2})\, \frac{{g_{m_{1}}(\om)}\;{g_{m_{2}}(\om)}\;\ov{{g_{n_{1}}(\om)}}\;\ov{{g_{n_{2}}(\om)}}}{\<m_{1}\>\,\<m_{2}\>\,\<n_{1}\>\,\<n_{2}\>}.
\end{equation} 
\subsection{Study of $S^{1}_{N}$}

\begin{lemm}\label{lem.s1n}
Let $S^{1}_{N}$ be defined by \eqref{sn1}. Then there exists $C>0$ so that for all $M>N>0$,
\begin{equation*}
\|S^{1}_{M}-S^{1}_{N}\|_{L^{2}(\Omega)}\leq \frac C{N^{\frac32}}.
\end{equation*} 
\end{lemm}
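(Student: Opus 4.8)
The plan is to make the two resonance conditions defining $B_N$ explicit, reduce $S^1_N$ to a closed form involving only the squared moduli $|g_n|^2$, and then bound $\|S^1_M-S^1_N\|_{L^2(\Omega)}$ by a direct second-moment computation, using the independence of the $(g_n)_{n\in\Z}$ together with the elementary identities $\E|g_n|^2=1$ and $\E|g_n|^4=2$.

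For the reduction, observe that on $A_N$ the constraint $m_1+m_2=n_1+n_2$ turns the condition $m_1=n_1$ into $m_2=n_2$, and $m_1=n_2$ into $m_2=n_1$. In either case the four Gaussians pair off and each summand collapses to $i(m_1+m_2)|g_{m_1}|^2|g_{m_2}|^2/(\langle m_1\rangle^2\langle m_2\rangle^2)$. Summing the two cases and subtracting the fully diagonal tuple $m_1=m_2$ that they share, I obtain
\[
S^1_N=4i\,Y_NZ_N-2i\,W_N,
\]
where $Y_N=\sum_{|m|\le N}m|g_m|^2/\langle m\rangle^2$, $Z_N=\sum_{|m|\le N}|g_m|^2/\langle m\rangle^2$ and $W_N=\sum_{|m|\le N}m|g_m|^4/\langle m\rangle^4$; all three are real, so $S^1_N$ is purely imaginary and $\|S^1_M-S^1_N\|_{L^2}$ equals the $L^2$ norm of the difference of the real quantities $4Y_NZ_N-2W_N$.

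I would then split the difference as
\[
S^1_M-S^1_N=4i\big[(Y_M-Y_N)Z_M+Y_N(Z_M-Z_N)\big]-2i(W_M-W_N)
\]
and estimate each block. The term $W_M-W_N$ involves only the frequencies $N<|m|\le M$, carries no derivative, and has vanishing mean by oddness, so its second moment is of size $\sum_{|m|>N}m^2/\langle m\rangle^8\lesssim N^{-5}$ and it is negligible. The two remaining blocks couple a high-frequency sum to a sum over all frequencies, and this is where the work concentrates. The essential structural input is that $m\mapsto m/\langle m\rangle^2$ is odd: expanding each $|g_m|^2$ about its mean $1$ (equivalently, pairing $m$ with $-m$) annihilates the deterministic part of $Y_M-Y_N$ because $\sum_{N<|m|\le M}m/\langle m\rangle^2=0$, and the same oddness forces several off-diagonal cross terms in $\E|S^1_M-S^1_N|^2$ to cancel rather than accumulate.

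I expect the derivative-carrying term $(Y_M-Y_N)Z_M$ to be the main obstacle. After the cancellations above one is reduced to controlling $\big(\sum_{N<|m|\le M}m^2/\langle m\rangle^4\big)^{1/2}$ against the low-frequency factor $Z_M$, and since the weight $m/\langle m\rangle^2$ decays only like $1/|m|$, extracting the stated decay hinges delicately on the interplay between this borderline derivative weight and the oddness-driven cancellation. I would pin down the precise rate by the full Isserlis/Wick expansion of $\E|S^1_M-S^1_N|^2$, tracking which contractions survive once the odd sums are set to zero; this bookkeeping, rather than any single inequality, is the heart of the estimate.
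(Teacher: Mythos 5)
Your algebraic reduction is correct and is essentially a resummed version of the paper's first step: on $B_N$ the constraint forces $(m_1,m_2)=(n_1,n_2)$ or $(n_2,n_1)$, and factoring the resulting sum gives $S^1_N=4iY_NZ_N-2iW_N$ in your notation (the paper instead separates the diagonal $X_N$ from the off-diagonal part and then centers $|g_n|^2$; the two bookkeepings agree up to how the diagonal tuple is counted). Your disposal of $W_M-W_N$ is also fine. The difficulty is that your argument stops exactly where the lemma begins: the estimate of $(Y_M-Y_N)Z_M$ is only announced (``I would pin down the precise rate by the full Isserlis/Wick expansion''), and no bound for this term is actually produced. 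Since this is the only term carrying the derivative weight, the inequality is not proved as written.

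More seriously, the gap cannot be closed in the way you hope. The oddness of $m\mapsto m\langle m\rangle^{-2}$ kills the \emph{mean} of $Y_M-Y_N$ but not its variance: with $G_m=|g_m|^2-1$ one has exactly $Y_M-Y_N=\sum_{N<|m|\le M}m\langle m\rangle^{-2}G_m$, hence $\|Y_M-Y_N\|_{L^2}^2=\sum_{N<|m|\le M}m^2\langle m\rangle^{-4}\asymp N^{-1}$. Since $Z_M$ has the non-vanishing deterministic part $z_M=\sum_{|m|\le M}\langle m\rangle^{-2}\geq 1$, the product $(Y_M-Y_N)Z_M$ contains the second-Wiener-chaos term $z_M(Y_M-Y_N)$, of $L^2$ norm $\asymp N^{-1/2}$, and this term is orthogonal to every other contraction in your expansion (one checks that the remaining second-chaos corrections coming from $Y_N\big(Z_M-Z_N\big)$, from the diagonal of $Y\cdot Z$, and from $W$ are $O(N^{-1})$ or cancel exactly), so no Wick bookkeeping can remove it. Your decomposition therefore yields $\|S^1_M-S^1_N\|_{L^2}\asymp N^{-1/2}$ --- still a Cauchy rate, but not the stated $N^{-3/2}$. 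You must reconcile this with the statement: the paper obtains $N^{-3/2}$ by evaluating the increment of the off-diagonal part over the set $C_{M,N}$ in which \emph{all four} frequencies lie in $(N,M]$, thereby discarding the mixed region $|n_1|\le N<|n_2|\le M$; in your factorized form that mixed region is precisely the term $z_M(Y_M-Y_N)$ identified above. So either exhibit an error in your reduction (I do not see one) or settle for the weaker exponent; in its present form the proposal proves neither.
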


\begin{proof}
Let $(m_{1},m_{2},n_{1},n_{2})\in B_{N}$.  Then  as $m_{1}+m_{2}=n_{1}+n_{2}$, we have $(m_{1},m_{2})=(n_{1},n_{2})$ or $(m_{1},m_{2})=(n_{2},n_{1})$, and deduce that 
\begin{equation*}
S^{1}_{N}=\sum_{|n_{1}|,|n_{2}|\leq N}2i(n_{1}+n_{2})\, \frac{|{g_{n_{1}}(\om)}|^{2}\;|{g_{n_{2}}(\om)}|^{2}}{\<n_{1}\>^{2}\,\<n_{2}\>^{2}}=X_{N}+Y_{N},
\end{equation*} 
where 

\begin{equation*}
X_{N}=\sum_{|n|\leq N}4in\, \frac{|{g_{n}(\om)}|^{4}}{\<n\>^{4}},
\end{equation*} 
and 
\begin{equation*}
Y_{N}=\sum_{\substack{|n_{1}|,|n_{2}|\leq N,\\ n_{1}\neq n_{2}}}2i(n_{1}+n_{2})\, \frac{|{g_{n_{1}}(\om)}|^{2}\;|{g_{n_{2}}(\om)}|^{2}}{\<n_{1}\>^{2}\,\<n_{2}\>^{2}}.
\end{equation*} 
$\spadesuit$ First we will show that there exists $C>0$ so that for all $M>N>0$,
\begin{equation}\label{XN}
\|X_{M}-X_{N}\|_{L^{2}(\Omega)}\leq \frac C{N^{2}}.
\end{equation} 
Let $M> N\geq 1$. Then 
\begin{equation*}
|X_{M}-X_{N}|^{2}=\sum_{N< |n_{1}|,|n_{2}|\leq M}16n_{1}n_{2}\, \frac{|{g_{n_{1}}(\om)}|^{4}\,|{g_{n_{2}}(\om)}|^{4}}{\<n_{1}\>^{4}\<n_{2}\>^{4}}.
\end{equation*} 
Thus 
\begin{equation*}
\|X_{M}-X_{N}\|^{2}_{L^{2}(\Omega)}\leq C\sum_{N< |n_{1}|,|n_{2}|\leq M}\, \frac{1}{\<n_{1}\>^{3}\<n_{2}\>^{3}}\leq \frac C{N^{4}},
\end{equation*} 
which proves \eqref{XN}. \\[4pt]
$\spadesuit$ To complete the proof of Lemma \ref{lem.s1n}, it remains to check that there exists $C>0$ so that for all $M>N>0$,
\begin{equation}\label{YN}
\|Y_{M}-Y_{N}\|_{L^{2}(\Omega)}\leq \frac C{N^{\frac32}}.
\end{equation} 
For $M\geq N\geq 1$ we write
\begin{eqnarray*}
Y_{N}&=&\sum_{\substack{ |n_{1}|,|n_{2}|\leq N, \\n_{1}\neq n_{2}}}i(n_{1}+n_{2})\frac{|{g_{n_{1}}(\om)}|^{2}\,|{g_{n_{2}}(\om)}|^{2}}{\<n_{1}\>^{2}\<n_{2}\>^{2}}\\
&=&Y^{1}_{N}+Y^{2}_{N}+Y^{3}_{N},
\end{eqnarray*}
with 
\begin{equation}\label{y1N}
Y^{1}_{N}=\sum_{\substack{ |n_{1}|,|n_{2}|\leq N,\\ n_{1}\neq n_{2}}}i(n_{1}+n_{2})\frac{\big(|{g_{n_{1}}(\om)}|^{2}-1\big)\,\big(|{g_{n_{2}}(\om)}|^{2}-1\big)}{\<n_{1}\>^{2}\<n_{2}\>^{2}},
\end{equation} 
\begin{equation}\label{y2N}
Y^{2}_{N}=\sum_{ \substack{|n_{1}|,|n_{2}|\leq N, \\n_{1}\neq n_{2}}}i(n_{1}+n_{2})\frac{\big(|{g_{n_{1}}(\om)}|^{2}-1\big)+\big(|{g_{n_{2}}(\om)}|^{2}-1\big)}{\<n_{1}\>^{2}\<n_{2}\>^{2}},
\end{equation} 
and
\begin{equation*}
Y^{3}_{N}=\sum_{\substack{ |n_{1}|,|n_{2}|\leq N,\\ n_{1}\neq n_{2}}}i(n_{1}+n_{2})\frac{1}{\<n_{1}\>^{2}\<n_{2}\>^{2}}.
\end{equation*}
By the symmetry $(n_{1},n_{2})\mapsto (-n_{1}, -n_{2})$, we have that $Y^{3}_{N}=0$.
For $n\in \Z$, denote by 
\begin{equation*} 
G_{n}(\om)=|g_{n}(\om)|^{2}-1.
\end{equation*}
Let $n\neq m$. Then, since $g_{n}$ and $g_{m}$ are independent and since $\E\big[|g_{n}(\om)|^{2}\big]=1$, we have 
 \begin{equation}\label{indep}
\E\big[G_{n}(\om)\,G_{m}(\om) \big]=\E\big[G_{n}(\om)\big] \E\big[G_{m}(\om) \big]=0.
\end{equation}
$\bullet$ First we analyse \eqref{y1N}. We compute 
\begin{equation*}
|Y^{1}_{M}-Y^{1}_{N}|^{2}=
\sum_{C_{M,N}}(n_{1}+n_{2})(m_{1}+m_{2})\frac{G_{m_{1}}(\om)\,G_{m_{2}}(\om)\;G_{n_{1}}(\om)\,G_{n_{2}}(\om)}{\<m_{1}\>^{2}\<m_{2}\>^{2}\<n_{1}\>^{2}\<n_{2}\>^{2}},
\end{equation*} 
where 
\begin{multline*}
C_{M,N}=
\big\{(m_{1},m_{2},n_{1},n_{2})\in \Z^{4}\;\;\text{s.t.}\;\; N<|m_{1}|, |m_{2}|,  |n_{1}|, |n_{2}| \leq M\\
\;\;\text{and}\;\; m_{1}\neq m_{2},\;\;n_{1}\neq n_{2}\big\}.
\end{multline*} 
We compute $\E\big[|Y^{1}_{M}-Y^{1}_{N}|^{2} \big]$, and thanks to \eqref{indep} we see that only the terms $(n_{1}=m_{1}\;\text{and} \;n_{2}=m_{2})$ or $(n_{1}=m_{2}\;\text{and} \;n_{2}=m_{1})$ give some contribution, hence  
\begin{eqnarray}\label{ineqC1}
\|Y^{1}_{M}-Y^{1}_{N}\|_{L^{2}(\Omega)}^{2}&\leq &C\sum_{N<|n_{1}|,|n_{2}|\leq M}\frac{(n_{1}+n_{2})^{2}}{\<n_{1}\>^{4}\<n_{2}\>^{4}}\nonumber\\
&\leq &C\sum_{N<|n_{1}|,|n_{2}|\leq M}\Big(\frac{1}{\<n_{1}\>^{2}\<n_{2}\>^{4}}+\frac{1}{\<n_{1}\>^{4}\<n_{2}\>^{2}}\Big)\leq \frac C{N^{4}}.
\end{eqnarray} 
$\bullet$ We now turn to  \eqref{y2N}. 
Similarly, we get 
\begin{equation*}
|Y^{2}_{M}-Y^{2}_{N}|^{2}=
\sum_{C_{M,N}}(n_{1}+n_{2})(m_{1}+m_{2})\frac{\big(G_{m_{1}}(\om)+G_{m_{2}}(\om)\big)\big(G_{n_{1}}(\om)+G_{n_{2}}(\om)\big)}{\<m_{1}\>^{2}\<m_{2}\>^{2}\<n_{1}\>^{2}\<n_{2}\>^{2}},
\end{equation*} 
and using the symmetries in $(n_{1},n_{2},m_{1},m_{2})$, and with \eqref{indep} we obtain 
\begin{equation}\label{ineq1}
\|Y^{2}_{M}-Y^{2}_{N}\|_{L^{2}(\Omega)}^{2}\leq C\Big|\sum_{\substack{C_{M,N},\\m_{1}=n_{1}}}\frac{(n_{1}+n_{2})(n_{1}+m_{2})}{\<m_{2}\>^{2}\<n_{1}\>^{4}\<n_{2}\>^{2}}\Big|.
\end{equation} 
We write
\begin{multline*} 
\sum_{\substack{N<|n_{2}|\leq M,\\ n_{2}\neq n_{1}}}\frac{(n_{1}+n_{2})(n_{1}+m_{2})}{\<m_{2}\>^{2}\<n_{1}\>^{4}\<n_{2}\>^{2}}=\\
\begin{aligned}
&=\Big(\sum_{N<|n_{2}|\leq M}\frac{(n_{1}+n_{2})(n_{1}+m_{2})}{\<m_{2}\>^{2}\<n_{1}\>^{4}\<n_{2}\>^{2}}\Big)-\frac{2n_{1}(n_{1}+m_{2})}{\<m_{2}\>^{2}\<n_{1}\>^{6}}.
\end{aligned}
\end{multline*}
Then, by symmetry
\begin{equation*}
\sum_{N<|n_{2}|\leq M}\frac{n_{2}(n_{1}+m_{2})}{\<m_{2}\>^{2}\<n_{1}\>^{4}\<n_{2}\>^{2}}=0,
\end{equation*}
thus 
\begin{eqnarray}\label{som1}
\sum_{\substack{N<|n_{2}|\leq M,\\ n_{2}\neq n_{1}}}\frac{(n_{1}+n_{2})(n_{1}+m_{2})}{\<m_{2}\>^{2}\<n_{1}\>^{4}\<n_{2}\>^{2}}&=&
s_{M,N}\frac{n_{1}(n_{1}+m_{2})}{\<m_{2}\>^{2}\<n_{1}\>^{4}}-\frac{2n_{1}(n_{1}+m_{2})}{\<m_{2}\>^{2}\<n_{1}\>^{6}}\nonumber \\
&=&\frac{n_{1}}{\<n_{1}\>^{4}}\big(s_{M,N}-\frac{2}{\<n_{1}\>^{2}}\big)\frac{n_{1}+m_{2}}{\<m_{2}\>^{2}},
\end{eqnarray}
with $\displaystyle s_{M,N}=\sum_{N<|n_{2}|\leq M}\frac1{\<n_{2}\>^{2}}\leq \frac {C}{N}$. 

Similarly, using that 
$
\dis \sum_{N<|m_{2}|\leq M}\frac{m_{2}}{\<m_{2}\>^{2}}=0,
$
we obtain
\begin{equation}\label{som2}
\sum_{\substack{N<|m_{2}|\leq M,\\ m_{2}\neq n_{1}}}\frac{n_{1}+m_{2}}{\<m_{2}\>^{2}}
=\Big(\sum_{N<|m_{2}|\leq M}\frac{n_{1}+m_{2}}{\<m_{2}\>^{2}}\Big)-\frac{2n_{1}}{\<n_{1}\>^{2}}=n_{1}\,s_{M,N}-\frac{2n_{1}}{\<n_{1}\>^{2}}.
\end{equation}
Then, from \eqref{som1} and \eqref{som2}, we deduce 
\begin{equation*}
\Big|\sum_{\substack{
N<|m_{2}|\leq M, \\
m_{2}\neq n_{1}
}}\sum_{\substack{N<|n_{2}|\leq M,\\ n_{2}\neq n_{1}}}\frac{(n_{1}+n_{2})(n_{1}+m_{2})}{\<m_{2}\>^{2}\<n_{1}\>^{4}\<n_{2}\>^{2}}\Big|\leq C \big(\frac1{N^{2}\<n_{1}\>^{2}}+\frac1{\<n_{1}\>^{6}}\big),
\end{equation*}
and from \eqref{ineq1}, 
\begin{equation}\label{ineqC2}
\|Y^{2}_{M}-Y^{2}_{N}\|_{L^{2}(\Omega)}^{2}\leq \frac C{N^{3}}.
\end{equation}
Finally, \eqref{ineqC1} and \eqref{ineqC2} yield the estimate \eqref{YN}.
\end{proof}
~
\subsection{Study of $S^{2}_{N}$}~\\[5pt]
We first state the elementary lemma
\begin{lemm}\label{lem.somme}
Let $n\in \Z$ and $N\geq 1$. Then for all $0<\eps\leq \frac12$
\begin{equation*}
\sum_{\substack{n_{1}\in \Z\\|n_{1}|,|n-n_{1}|\geq N }}\frac1{\<n_{1}\>^{2}\<n-n_{1}\>^{2}}\leq \frac{C}{N^{\frac32-\eps} \<n\>^{\frac32+\eps}  }.
\end{equation*} 
\end{lemm}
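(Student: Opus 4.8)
The plan is to bound the sum, call it $S$, by the minimum of two uniform-in-$n$ estimates and then interpolate. Concretely, I would first establish the two bounds
$$ S \;\lesssim\; \frac{1}{N\,\<n\>^{2}}, \qquad S \;\lesssim\; \frac{1}{N^{3}}, $$
valid for every $n\in\Z$ and every $N\geq 1$, where $S$ denotes the sum on the left-hand side of the statement. Granting these, I observe that the claimed right-hand side is exactly a geometric mean of the two: writing $\theta=\frac14-\frac{\eps}{2}\in[0,\frac14]$ one checks that $(N^{-3})^{\theta}\,(N^{-1}\<n\>^{-2})^{1-\theta}=N^{-(3/2-\eps)}\,\<n\>^{-(3/2+\eps)}$, so the elementary inequality $\min(a,b)\leq a^{\theta}b^{1-\theta}$ (valid for $a,b>0$ and $0\leq\theta\leq1$) yields the lemma at once. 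Everything therefore reduces to the two displayed bounds.

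For the first bound I would split the summation according to the position of $n_{1}$ relative to $0$ and $n$, in the regime $|n|\geq 4N$ (so that the excluded zones $\{|n_{1}|<N\}$ and $\{|n-n_{1}|<N\}$ are disjoint intervals). In the region $N\leq|n_{1}|\leq |n|/2$ one has $|n-n_{1}|\geq|n|/2$, hence $\<n-n_{1}\>\gtrsim\<n\>$, and the contribution is controlled by $\<n\>^{-2}\sum_{|n_{1}|\geq N}\<n_{1}\>^{-2}\lesssim N^{-1}\<n\>^{-2}$, using the elementary tail estimate $\sum_{|k|\geq N}\<k\>^{-2}\lesssim N^{-1}$. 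The symmetric region where $n_{1}$ lies within $|n|/2$ of $n$ is handled identically after the change of variable $n_{1}\mapsto n-n_{1}$, and in the remaining region, where both $|n_{1}|$ and $|n-n_{1}|$ exceed $|n|/2$, one has $\<n_{1}\>\gtrsim\<n\>$ and the residual one-dimensional tail sum gives an extra factor $\<n\>^{-1}\lesssim N^{-1}$. This proves $S\lesssim N^{-1}\<n\>^{-2}$ when $|n|\geq 4N$; for $|n|<4N$ the quantity $N^{-1}\<n\>^{-2}$ is already $\gtrsim N^{-3}$, so the first bound then follows a posteriori from the second.

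For the second bound, valid for all $n$, I would argue directly from the constraint $|n_{1}|,|n-n_{1}|\geq N$. When $|n_{1}|\geq 2|n|$ one has $\<n-n_{1}\>\gtrsim\<n_{1}\>$, whence $\sum_{|n_{1}|\geq N}\<n_{1}\>^{-4}\lesssim N^{-3}$; in the complementary range $N\leq|n_{1}|\lesssim|n|$ (nonempty only when $|n|\gtrsim N$) one bounds $\<n_{1}\>^{-2}\leq N^{-2}$ and sums the remaining factor $\<n-n_{1}\>^{-2}$ over $|n-n_{1}|\geq N$ to gain a further $N^{-1}$, again producing $N^{-3}$.

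The point worth emphasising is that a single convolution estimate does not suffice: extracting the factor $N^{-(3/2-\eps)}$ from the constraint and then applying the standard bound $\sum_{k}\<k\>^{-p}\<n-k\>^{-q}\lesssim\<n\>^{-\min(p,q,p+q-1)}$ only produces the exponent $\min(p,q)$ in $\<n\>$, which is strictly below the required $3/2+\eps$. The mechanism that recovers the sharp exponent is precisely the separation of roles carried out above — the power of $N$ comes from the tail sums far from the two peaks at $0$ and $n$, while the power of $\<n\>$ comes from the frequency separation — together with the final interpolation. The only genuinely delicate points are checking the elementary tail sums and verifying that the region decomposition exhausts all admissible pairs $(n_{1},n-n_{1})$.
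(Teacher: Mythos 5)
Your argument is correct, but it takes a genuinely different route from the paper's. The paper proves the lemma in one stroke: for $\a=\frac32+\eps\leq 2$ it writes $\<n\>^{\a}\leq C\big(\<n_{1}\>^{\a}+\<n-n_{1}\>^{\a}\big)$ and $\<n_{1}\>^{2}\<n-n_{1}\>^{2}\geq CN^{4-2\a}\<n_{1}\>^{\a}\<n-n_{1}\>^{\a}$ on the constraint set, so that the summand times $\<n\>^{3/2+\eps}$ is dominated by $N^{-(1-2\eps)}\big(\<n_{1}\>^{-(3/2+\eps)}+\<n-n_{1}\>^{-(3/2+\eps)}\big)$, and the tail sums over $|n_{1}|,|n-n_{1}|\geq N$ supply the remaining $N^{-(1/2+\eps)}$; no case analysis in $n$ is needed. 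You instead establish the two endpoint estimates $S\lesssim N^{-1}\<n\>^{-2}$ and $S\lesssim N^{-3}$ by a region decomposition (near $0$, near $n$, and far from both) and recover the stated bound as the geometric mean $\min(a,b)\leq a^{\theta}b^{1-\theta}$ with $\theta=\frac14-\frac{\eps}{2}$; I checked that the exponents match ($2\theta+1=\frac32-\eps$, $2-2\theta=\frac32+\eps$) and that your reduction of the case $|n|<4N$ of the first bound to the second is legitimate. Your route is longer but more transparent about where each power comes from, and the two endpoint bounds are reusable statements in their own right; the paper's route is shorter and avoids splitting into regions, at the cost of the slightly opaque choice of the auxiliary exponent $\a$. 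Your closing remark that a single naive convolution estimate only yields $\<n\>^{-\min(p,q)}$ is a fair observation, though the paper's weighted splitting of $\<n\>^{\a}$ is precisely the device that circumvents it without interpolation.
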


\begin{proof}
Let $N\geq 1$. For $\alpha>1$ we have the inequalities
$$\<n\>^{\a}\leq C\big( \<n_{1}\>^{\a}+\<n-n_{1}\>^{\a}\big),$$ 
and
$$\<n_{1}\>^{2}\<n-n_{1}\>^{2}\geq CN^{4-2\a}\<n_{1}\>^{\a}\<n-n_{1}\>^{\a}\;\;\text{for}\;\;|n_{1}|,|n-n_{1}|\geq N.$$
Now choose $\a=\frac32+\eps\leq 2$ to get 
\begin{equation}\label{sumup}
\frac{\<n\>^{\frac32+\eps}}{\<n_{1}\>^{2}\<n-n_{1}\>^{2}}\leq \frac{C}{N^{1-2\eps}} \big(\frac1{\<n_{1}\>^{\frac32+\eps}}+\frac1{\<n-n_{1}\>^{\frac32+\eps}}\big).
\end{equation} 
We sum up \eqref{sumup}, thus 
\begin{multline*}
\sum_{\substack{n_{1}\in \Z\\|n_{1}|,|n-n_{1}|\geq N }}\frac1{\<n_{1}\>^{2}\<n-n_{1}\>^{2}}\leq \\
\begin{aligned}
& \leq  \frac{C}{N^{1-2\eps} \<n\>^{\frac32+\eps}    }\sum_{\substack{n_{1}\in \Z\\|n_{1}|,|n-n_{1}|\geq N }}\big(\frac1{\<n_{1}\>^{\frac32+\eps}}+\frac1{\<n-n_{1}\>^{\frac32+\eps}}\big)\leq \frac{C}{N^{\frac32-\eps} \<n\>^{\frac32+\eps}  },
\end{aligned}
\end{multline*}
which was the claim. 
\end{proof}
~\\
We are now able to prove
\begin{lemm}\label{lem.s2n}
Let $S^{2}_{N}$ be defined by \eqref{sn2}. For all $0<\eps\leq \frac12$,  there exists $C>0$ so that for all $M>N>0$,
\begin{equation*}
\|S^{2}_{M}-S^{2}_{N}\|_{L^{2}(\Omega)}\leq \frac C{N^{\frac32-\eps}}.
\end{equation*} 
\end{lemm}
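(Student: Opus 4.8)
The plan is to compute the second moment $\|S^2_M-S^2_N\|_{L^2(\Omega)}^2=\E\big[|S^2_M-S^2_N|^2\big]$ by hand, collapse it to a purely deterministic frequency sum, and then estimate that sum with Lemma~\ref{lem.somme}. Write $D_{M,N}$ for the index set of $S^2_M-S^2_N$: quadruples $(m_1,m_2,n_1,n_2)$ with $m_1+m_2=n_1+n_2$, with $m_1\neq n_1$ and $m_1\neq n_2$ (which, via the constraint, also gives $m_2\neq n_1,n_2$, hence $\{m_1,m_2\}\cap\{n_1,n_2\}=\emptyset$), all of modulus $\leq M$ and at least one of modulus $>N$. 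Expanding the square produces a double sum over two quadruples weighted by $\E\big[g_{m_1}g_{m_2}\ov{g_{n_1}}\,\ov{g_{n_2}}\,\ov{g_{m_1'}}\,\ov{g_{m_2'}}g_{n_1'}g_{n_2'}\big]$. The decisive point is that the disjointness built into $S^2_N$ forces the surviving Wick pairings to match $\{m_1,m_2\}$ with $\{m_1',m_2'\}$ and $\{n_1,n_2\}$ with $\{n_1',n_2'\}$ as multisets, so the second quadruple must be a rearrangement of the first; the corresponding products of coefficients are then nonnegative and equal to $(n_1+n_2)^2/(\<m_1\>^2\<m_2\>^2\<n_1\>^2\<n_2\>^2)$. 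Hence
\[
\E\big[|S^2_M-S^2_N|^2\big]\leq C\sum_{D_{M,N}}\frac{(n_1+n_2)^2}{\<m_1\>^2\<m_2\>^2\<n_1\>^2\<n_2\>^2}.
\]

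Next I would estimate this sum by organising it according to the common total frequency $n=m_1+m_2=n_1+n_2$. For fixed $n$ the summand factors into a sum over the pair $(m_1,m_2)$ and a sum over the pair $(n_1,n_2)$, each of the shape $\sum_{a+b=n}\<a\>^{-2}\<b\>^{-2}$, carrying the weight $n^2$. Whenever such a pair has both indices of modulus $\geq N$, Lemma~\ref{lem.somme} bounds it by $C\,N^{-(3/2-\eps)}\<n\>^{-(3/2+\eps)}$; the gain in $\<n\>$ is what makes the scheme work, since it lets me absorb the factor $(n_1+n_2)^2\leq\<n\>^2$ and still be left with a convergent series $\sum_n\<n\>^{-(1+2\eps)}$ after applying the lemma to both pairs. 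A pair to which the lemma does not apply is instead summed trivially by $\sum_{a+b=n}\<a\>^{-2}\<b\>^{-2}\leq C\<n\>^{-2}$.

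The main obstacle is the frequency bookkeeping, because membership in $D_{M,N}$ only guarantees that one of the four indices exceeds $N$, whereas Lemma~\ref{lem.somme} needs an entire pair to be large. I would therefore split $D_{M,N}$ according to which indices are $>N$ and treat each piece separately, always applying Lemma~\ref{lem.somme} to a genuinely large pair and the trivial bound to the complementary one. The delicate configurations are those in which $n_1,n_2$ (or $m_1,m_2$) are large and of opposite sign while the other pair stays bounded, so that $n=n_1+n_2$ is small and the decay must come entirely from the vanishing factor $(n_1+n_2)^2$ together with the $\<n\>^{-(3/2+\eps)}$ gain of the lemma; this is the step where the estimate is genuinely tight and where one must be most careful. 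Pushing the resulting convergent series in $n$ through is designed to yield $\E\big[|S^2_M-S^2_N|^2\big]\leq C\,N^{-(3-2\eps)}$, whence the claimed bound after taking the square root.
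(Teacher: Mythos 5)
Your first step --- expanding $\E\big[|S^{2}_{M}-S^{2}_{N}|^{2}\big]$, keeping only the Wick pairings $\{m_{1},m_{2}\}=\{q_{1},q_{2}\}$, $\{n_{1},n_{2}\}=\{p_{1},p_{2}\}$ forced by the disjointness constraint, and reducing to the nonnegative deterministic sum of $(n_{1}+n_{2})^{2}\<m_{1}\>^{-2}\<m_{2}\>^{-2}\<n_{1}\>^{-2}\<n_{2}\>^{-2}$ --- is exactly the paper's argument and is correct. The divergence is in the index set: the paper evaluates this sum only over its set $D_{M,N}$, in which \emph{all four} frequencies have modulus in $(N,M]$, so that Lemma \ref{lem.somme} applies to both the $(n_{1},n_{2})$-pair and the $(m_{1},m_{2})$-pair and the factor $N^{-(3-2\eps)}$ comes out at once. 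You correctly note that the literal difference $S^{2}_{M}-S^{2}_{N}$ runs over the larger set where only one frequency need exceed $N$, and you propose to recover the same rate by a case analysis, applying Lemma \ref{lem.somme} to whichever pair is entirely large and the trivial bound $C\<n\>^{-2}$ to the other. This last step is the gap: it does not, and cannot, yield $N^{-(3-2\eps)}$.

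Concretely, take $(m_{1},m_{2},n_{1},n_{2})=(1,n-1,n,0)$ with $N<n\leq M$. These quadruples satisfy $m_{1}+m_{2}=n_{1}+n_{2}=n$ and the constraints $m_{1}\neq n_{1},n_{2}$, they belong to the difference set because $|n_{1}|>N$, and each pair contains a bounded frequency, so neither pair is eligible for Lemma \ref{lem.somme}; your scheme assigns them only the trivial bounds, giving $\sum_{N<n\leq M}n^{2}\cdot\<n\>^{-2}\cdot\<n\>^{-2}\asymp N^{-1}-M^{-1}$. This is not an artifact of a lossy estimate: since every surviving term in the second-moment expansion is nonnegative (as you observe), the deterministic sum is also a lower bound, and the monomials $g_{1}\,g_{n-1}\,\ov{g_{n}}\,\ov{g_{0}}$ are mutually orthogonal for distinct $n$, so in fact $\|S^{2}_{2N}-S^{2}_{N}\|_{L^{2}(\Omega)}^{2}\gtrsim N^{-1}$ over the unrestricted difference set. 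The genuinely dangerous configuration is therefore not the one you single out (two large frequencies of opposite sign with small total $n$, which Lemma \ref{lem.somme} handles), but a single large frequency paired with a bounded one, which forces $|n|>N$ while leaving only $\<n\>^{-2}$ of decay per pair. Your proof as written thus fails at its final step; the stated rate is only obtained after restricting, as the paper's $D_{M,N}$ does, to quadruples all of whose frequencies exceed $N$, and your (correct) reading of the index set shows that this restriction is doing real work that the proposal does not supply.
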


\begin{proof}
We compute 
\begin{equation*}\label{}
\big|S^{2}_{M}-{S^{2}_{N}}\big|^{2}=\sum_{D_{M,N}\times D_{M,N}}(n_{1}+n_{2})(p_{1}+p_{2})\, \frac{{g_{m_{1}}\;{g_{m_{2}}}\;\ov{{g_{n_{1}}}}\;\ov{{g_{n_{2}}}}\;{g_{p_{1}}\;{g_{p_{2}}}\;\ov{{g_{q_{1}}}}}}\;\ov{{g_{q_{2}}}}}{\<m_{1}\>\,\<m_{2}\>\,\<n_{1}\>\,\<n_{2}\>\,\<p_{1}\>\,\<p_{2}\>\,\<q_{1}\>\,\<q_{2}\>},
\end{equation*} 
where 
\begin{multline*}
D_{M,N}=
\big\{(m_{1},m_{2},n_{1},n_{2})\in \Z^{4}\;\;\text{s.t.}\;\; N<|m_{1}|, |m_{2}|,  |n_{1}|, |n_{2}| \leq M\\
\;\;\text{and}\;\; 
m_{1}+m_{2}=n_{1}+n_{2},\;\;m_{1}\neq n_{1},\;\;m_{1}\neq n_{2}\big\}.
\end{multline*} 
The expectation of each term of the previous sum vanishes, unless $\big((m_{1},m_{2})=(q_{1},q_{2})\;\;\text{or}\;\; (q_{2},q_{1})\big)$ and  $\big((n_{1},n_{2})=(p_{1},p_{2})\;\;\text{or}\;\; (p_{2},p_{1})\big)$. Hence 
\begin{equation*}
\|S^{2}_{M}-{S^{2}_{N}}\|_{L^{2}(\Omega)}^{2}\leq C\Big|\sum_{D_{M,N}} \frac{(n_{1}+n_{2})(m_{1}+m_{2})}{\<n_{1}\>^{2}\,\<n_{2}\>^{2}\,\<m_{1}\>^{2}\,\<m_{2}\>^{2}}\Big|.
\end{equation*}
Write $n=n_{1}+n_{2}=m_{1}+m_{2}$, therefore 
\begin{eqnarray*}
\|S^{2}_{M}-{S^{2}_{N}}\|_{L^{2}(\Omega)}^{2}&\leq &C\sum_{n\in \Z}\;\sum_{\substack{|n_{1}|,|n-n_{1}|> N,\\|m_{1}|,|n-m_{1}|> N}} \frac{n^{2}}{\<n_{1}\>^{2}\,\<n-n_{1}\>^{2}\,\<m_{1}\>^{2}\,\<n-m_{1}\>^{2}}\\
&=&C\sum_{n\in \Z} n^{2}\Big(\sum_{|n_{1}|,|n-n_{1}|> N}\frac{1}{\<n_{1}\>^{2}\,\<n-n_{1}\>^{2}}\Big)^{2}\\
&\leq & \frac{C}{N^{3-2\eps}}\sum_{n\in \Z}\frac{n^{2}}{ \<n\>^{3+2\eps}  }\leq \frac{C}{N^{3-2\eps}},
\end{eqnarray*}
by Lemma \ref{lem.somme}.
\end{proof}
The results of Lemmas \ref{lem.s1n} and \ref{lem.s2n} imply \eqref{L2}. 

To complete the proof of Proposition \ref{prop.cauchy}, it remains to show \eqref{LP}. But this is a direct consequence of \eqref{L2} and Proposition \ref{prop.wiener}
\\[5pt]
\noindent We are now able to define the density $G\,:\,H^{\s}(\T)\longrightarrow \R$ (with respect to the measure $\mu$) of the measure $\rho$. By \eqref{CVM} and Proposition \ref{prop.cauchy} and Lemma \ref{ld}, we have the following convergences in the $\mu$ measure :   $f_{N}(u)$ converges   to $f(u)$ and   $\|u_{N}\|_{L^{6}(\T)}$  to $\|u\|_{L^{6}(\T)}$. Then, by composition and multiplication of continuous functions, we obtain 
\begin{equation}\label{def.G}
\chi\big(\|u_{N}\|_{L^2(\T)}\big)
\e^{\frac{3}{4}f_{N}(u)-\frac12\int_{\T}|u_{N}(x)|^6\text{d}x}\longrightarrow \chi\big(\|u\|_{L^2(\T)}\big)
\e^{\frac{3}{4}f(u)-\frac{1}{2}\int_{\T}|u(x)|^6\text{d}x}\equiv G(u),
\end{equation}
in measure, with respect to the measure $\mu$. As a consequence, $G$ is measurable from $\big(H^{\s}(\T), \mathcal{B}\big)$ to $\R$.~\\

\section{\texorpdfstring{Integrability of the density of $\text{d}\rho$}{Integrability of the density of drho}}\label{Section4}~

\noindent We now state a result which will be useful for the $L^{p}$ estimates in Theorem \ref{thm1}.
\begin{prop}\label{prop1}
There exist $\kappa_{0}>0$ and $c,\,C>0$ so that for all $0<\kappa\leq \kappa_{0}$, $\lambda\geq 2$ and  $1\leq N\leq \lambda$
\begin{equation*}
\mu\big(\,u\in { H}^{\sigma}(\T)\,:\, \|\p_{x}\big(u_{N}^{2}\big)\|_{L^{\infty}(\T)}>\lambda,\;\; \|u_{N}\|_{L^{2}(\T)}\leq \kappa\,\big)\leq Ce^{-c\lambda}.
\end{equation*}
\end{prop}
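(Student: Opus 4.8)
The plan is to express $\p_x(u_N^2)$ as a quadratic Gaussian series in the variables $(g_n)$, restrict attention to the event where the $L^2$-mass is small, and then combine the pointwise large deviation bound of Lemma \ref{lem.ponctuel} with a union bound over a suitable net in $x$. First I would write, on the support of $\mu$,
\begin{equation*}
\p_x(u_N^2)(x)=\sum_{|m_1|,|m_2|\leq N}i(m_1+m_2)\frac{g_{m_1}g_{m_2}}{\<m_1\>\<m_2\>}\e^{i(m_1+m_2)x},
\end{equation*}
exactly as in \eqref{eq.dphi}, so for fixed $x$ this is a quadratic expression in the Gaussians. The key structural observation is that the constraint $\|u_N\|_{L^2}\leq\kappa$ forces the coefficient sequence entering Lemma \ref{lem.ponctuel} to be small: writing $c_n=|g_n|/\<n\>$ we have $\sum_{|n|\leq N}c_n^2=\|u_N\|_{L^2}^2\leq\kappa^2$, so each $c_n\leq\kappa$ is bounded by $\kappa$, and it is precisely this $\kappa$-smallness that I expect to generate the gain producing the clean $e^{-c\lambda}$ tail for small $\kappa$.

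For a single point $x$ I would apply Lemma \ref{lem.ponctuel}: after absorbing the factor $(m_1+m_2)/(\<m_1\>\<m_2\>)$, which is bounded since $|m_j|\leq N\leq\lambda$, into bounded sequences $(c_{m_1})$, $(d_{m_2})$, the lemma (used on the real and imaginary parts separately, via \eqref{def.gn}) gives a bound of the form $Ce^{-c\lambda}$ for the probability that $|\p_x(u_N^2)(x)|>\lambda$ at that fixed $x$, under the restriction $N\leq\lambda$ that appears in both statements. The main work is then to pass from a single point to the $L^\infty$ norm. I would discretize $\T$ by a set of $\sim N^2$ points $x_j$ with spacing $\sim N^{-2}$, and control the oscillation using that $\p_x(u_N^2)$ is a trigonometric polynomial of degree $\leq 2N$, so by Bernstein's inequality $\|\p_x^2(u_N^2)\|_{L^\infty}\lesssim N^2\|\p_x(u_N^2)\|_{L^\infty}$; hence $\sup_x|\p_x(u_N^2)(x)|\leq \max_j|\p_x(u_N^2)(x_j)|+\tfrac12\sup_x|\p_x(u_N^2)(x)|$ once the mesh is fine enough, giving $\|\p_x(u_N^2)\|_{L^\infty}\leq 2\max_j|\p_x(u_N^2)(x_j)|$.

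With this reduction in hand I would take a union bound over the $\sim N^2\leq\lambda^2$ points: intersecting each pointwise event with $\{\|u_N\|_{L^2}\leq\kappa\}$ and summing gives a factor $N^2$ in front of $e^{-c\lambda}$, and since $N\leq\lambda$ the polynomial loss $\lambda^2$ is absorbed by halving the constant $c$ in the exponent (for $\lambda\geq 2$), yielding the claimed $Ce^{-c\lambda}$. I expect the genuine obstacle to be a bookkeeping one rather than a conceptual one: namely verifying that the $\kappa$-smallness of the coefficients feeds correctly into the choice of $\eps$ and $t$ in the proof of Lemma \ref{lem.ponctuel} so that the threshold $\kappa\leq\kappa_0$ emerges with constants uniform in $N$ and $\lambda$, and making sure the Bernstein discretization argument interacts cleanly with the $L^2$-constraint (which must be imposed \emph{inside} the probability, not on the deterministic polynomial). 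Care is also needed because $\p_x(u_N^2)$ has zero low-frequency structure only partially, so the bounded-sequence hypotheses of Lemma \ref{lem.ponctuel} should be checked against the actual coefficients rather than assumed.
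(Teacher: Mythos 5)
Your overall strategy is the same as the paper's: reduce the sup norm to finitely many points of $\T$, bound each pointwise event by the quadratic large deviation estimate of Lemma \ref{lem.ponctuel} (after splitting into real and imaginary parts via \eqref{def.gn}), and absorb the polynomial cardinality of the net into the exponential using $N\leq\lambda$. Where you genuinely differ is the oscillation control between net points. The paper takes $\sim\lambda^{5}$ points, writes $v_N(x^\star)-v_N(x_{j_0})$ by the Taylor formula, bounds it by $|x^\star-x_{j_0}|^{1/2}\|\p_x v_N\|_{L^2}$, and then uses Sobolev together with the constraint $\|u_N\|_{L^2}\leq\kappa$ to get $\|\p_x v_N\|_{L^2}\leq C\lambda^{5/2}\kappa^2$; this is the \emph{only} place where $\kappa_0$ enters Proposition \ref{prop1}. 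Your Bernstein chaining ($\|\p_x v_N\|_{L^\infty}\leq 2N\|v_N\|_{L^\infty}$ for the trigonometric polynomial $v_N=\p_x(u_N^2)$ of degree $2N$) is purely deterministic, needs no $L^2$ information, and therefore proves the \emph{unconstrained} bound $\mu(\|\p_x(u_N^2)\|_{L^\infty}>\lambda)\leq Ce^{-c\lambda}$, from which the stated result follows for every $\kappa$ by monotonicity. That is a legitimate and arguably cleaner variant. (Minor bookkeeping: one derivative of a degree-$2N$ polynomial costs a factor $N$, not $N^2$, and you need the oscillation constant strictly below $\tfrac12$, so take the mesh a bit finer than $N^{-1}$ times a small constant, e.g. $N^{-2}$ with the correct Bernstein factor.)

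Two points need repair. First, you cannot ``absorb the factor $(m_1+m_2)/(\<m_1\>\<m_2\>)$ into bounded sequences $(c_{m_1})$, $(d_{m_2})$'': Lemma \ref{lem.ponctuel} treats sums $\sum c_{n_1}d_{n_2}\gamma_{n_1}\gamma_{n_2}/\<n_1\>$ in which exactly one index retains a summable weight $1/\<n_1\>$, and its proof relies on $\sum_{n_1}c_{n_1}^2/\<n_1\>^2\leq C$ while tolerating $\sum_{n_2}d_{n_2}^2\leq CN$. If you put the whole coefficient into the bounded sequences you destroy that weight, and moreover $(m_1+m_2)/\<m_2\>$ is not of product form in $(m_1,m_2)$. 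The fix is the one the paper uses: symmetrize the double sum in $(m_1,m_2)$ to replace $(m_1+m_2)$ by $2m_2$, so the coefficient becomes $(m_2/\<m_2\>)\cdot(1/\<m_1\>)$ with $m_2/\<m_2\>$ bounded and the $1/\<m_1\>$ weight intact (equivalently, split $m_1+m_2$ into two sums, each matching the lemma). Second, your expectation that the $\kappa$-smallness of $\|u_N\|_{L^2}$ ``generates the gain'' by making the coefficients small is a dead end: the coefficients in Lemma \ref{lem.ponctuel} must be deterministic, and conditioning on the random event $\{\|u_N\|_{L^2}\leq\kappa\}$ changes the law of the Gaussians, so you cannot feed $|g_n|/\<n\>$ back in as a coefficient. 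Fortunately nothing is lost: as the paper does, one simply discards the $L^2$ constraint at the pointwise step by monotonicity of the measure, and the resulting constants $c,C$ are uniform in $\kappa$. (The $\kappa^2$ gain you have in mind is used later, in Proposition \ref{prop.lp}, through the deterministic bound $|f_N(u)|\leq C\kappa^2\|\p_x(u_N^2)\|_{L^\infty}$, not inside Proposition \ref{prop1}.) With these two corrections your argument goes through.
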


\begin{proof}
We can follow the mains  lines of the proof of \cite[Proposition 4.1]{Tzvetkov3}.\\
For $j\in \{0,\cdots, [\lambda^{5}]\}$, we define the points $x_j\in \T$  by
$
\dis x_{j}=  \frac{2\pi j}{\lambda^{5}}.
$

\noindent Denote by ${\rm dist}$ the
distance on $\T$. Then by construction,  $ \dis {\rm dist}(x_{j},x_{j+1})\leq \frac{2\pi}{\lambda^{5}}$,
with $x_{[\lambda^{5}]+1}\equiv x_{0}$. We define the set $K_{\lambda}$ by
$$ K_{\lambda}\equiv
 \Big\{\,u\in H^{\sigma}(\T)\,:\, \|\p_{x}\big(u_{N}^{2}\big)\|_{L^{\infty}(\T)}\geq
\lambda ,\;\; \|u_{N}\|_{L^2(\T)}\leq \kappa\,\Big\}\,, $$
and the sets $K_{\lambda,j}$ by
$$ K_{\lambda,j}\equiv
 \Big\{\,u\in H^{\sigma}(\T)\,:\, |\p_{x}\big(u_{N}^{2}\big)(x_j)|\geq
\frac{\lambda}2 ,\;\; \|u_{N}\|_{L^2(\T)}\leq \kappa\,\Big\}\,. $$
$\bullet$ As in \cite{Tzvetkov3} we will show that 
\begin{equation}\label{union}
K_{\lambda}\subset
\bigcup_{j=0}^{[\lambda^{5}]}K_{\lambda,j}\,.
\end{equation}
Let  $u\in K_{\lambda}$, and  denote by $v_{N}=\p_{x}\big(u_{N}^{2}\big)$. Let $x^{\star}\in \T$ be such that
$$
|v_{N}(x^{\star})|=\max_{x\in \T}|v_{N}(x)|.
$$
Thus 
$
|v_{N}(x^{\star})|\geq \lambda.
$
Then there exists $j_0\in \{0,\cdots, [\lambda^{5}]\}$ such that 
\begin{equation}\label{dist}
|x^\star-x_{j_0}|\leq \frac{2\pi}{\lambda^{5}}.
\end{equation}
Then thanks to the Taylor formula, we have
\begin{eqnarray}\label{taylor}
|v_{N}(x^\star)-v_{N}(x_{j_0})|& = & \Big|\int_{x_{j_0}}^{x^\star}
\partial_{x} v_{N}(t)dt\Big|\nonumber \\
& \leq & |x^\star-x_{j_0}|^{\frac{1}{2}}\|\partial_{x} v_{N}\|_{L^2(\T)}. 
\end{eqnarray}
Now by the Sobolev embeddings we obtain the bound (with $N\leq \lambda$)
\begin{eqnarray}
\|\partial_{x} v_{N}\|_{L^2(\T)}\leq C N \| v_{N}\|_{L^2(\T)}&\leq& CN \| u_{N}\|_{L^2(\T)} \| \p_{x}u_{N}\|_{L^{\infty}(\T)}\nonumber\\
&\leq & CN^{\frac52}  \| u_{N}\|^{2}_{L^2(\T)}\leq C\lambda^{\frac52} \kappa^{2}\label{sobo}.
\end{eqnarray}
Therefore, from \eqref{dist}, \eqref{taylor} and \eqref{sobo} we deduce that for $\kappa>0$ small enough
\begin{equation*}
|v_{N}(x^\star)-u_{N}(x_{j_0})| \leq C \kappa^{2}\leq \frac12\lambda,
\end{equation*}
Thus, by the triangle inequality
$$
|v_{N}(x_{j_0})|\geq
|v_{N}(x^\star)|
-|v_{N}(x^\star)-v_{N}(x_{j_0})|\geq \lambda-\frac{1}{2}\lambda=
\frac{1}{2}\lambda\,,
$$
we can conclude that  $u\in K_{\lambda,j_0}$, which proves \eqref{union}.\\[3pt]
$\bullet$ We now estimate 
$\mu(K_{\lambda,j})$. \\
As in \cite{Tzvetkov3}, we can forget the $L^{2}$ constraint and write   
\begin{equation*}
\mu(K_{\lambda,j})\leq {\bf p}\big(\,\omega \in \Omega\,:\, \big|\p_{x}\big(\phi_{N}^{2}\big)(x_j)\big|\geq
\frac{\lambda}2\,\big).
\end{equation*}
First observe that 
\begin{multline*}
\big\{\,\omega \in \Omega\,:\, \big|\p_{x}\big(\phi_{N}^{2}\big)(x_j)\big|\geq
\frac{\lambda}2\,\big\}\subset \\
\big\{\,\omega \in \Omega\,:\, \big|\re \p_{x}\big(\phi_{N}^{2}\big)(x_j)\big|\geq
\frac{\lambda}4\,\big\} \cup \big\{\,\omega \in \Omega\,:\, \big|\im \p_{x}\big(\phi_{N}^{2}\big)(x_j)\big|\geq
\frac{\lambda}4\,\big\}.
\end{multline*}
Indeed we can describe the previous sets by the following way. Write
\begin{equation*}
\frac12 \p_{x}\big(\phi_{N}^{2}\big)(x_j)=\sum_{|n_{1}|,|n_{2}|\leq N}in_{2}\frac{g_{n_{1}}(\om)\, g_{n_{2}}(\om) }{\<n_{1}\>\<n_{2}\>}\e^{i(n_{1}+n_{2})x_{j}},
\end{equation*}
and use that $g_{n}(\om)=\frac1{\sqrt{2}}\big(h_{n}(\om)+il_{n}(\om) \big)$ where $\big(h_{n}\big)_{n\in \Z}, \big(l_{n}\big)_{n\in \Z} \in \mathcal{N}_{\R}(0,1)$ are independent. Then a straightforward computation enables us to put $\re \p_{x}\big(\phi_{N}^{2}\big)(x_j) $ and $\im \p_{x}\big(\phi_{N}^{2}\big)(x_j)$ in the form 
\begin{equation*}
\sum_{|n_{1}|,|n_{2}|\leq N}\frac{c_{n_{1}} d_{n_{2}}}{\<n_{1}\>}\,\gamma_{n_{1}}(\om)\,\gamma_{n_{2}}(\om),
\end{equation*}
with $|c_{n}|,\,|d_{n}|\leq C$ and where $\big(\gamma_{n}\big)_{n\in \Z} \in \mathcal{N}_{\R}(0,1)$ is an  independent family of real Gaussians (indeed $\gamma_{n}=h_{n}$ or $\gamma_{n}=l_{n}$). Therefore we can apply the Lemma \ref{lem.ponctuel} to get  
\begin{equation}\label{eq.exp2}
\mu(K_{\lambda,j})\leq Ce^{-c\lambda}\,.
\end{equation}
Finally by  \eqref{union} and \eqref{eq.exp2} we deduce that 
\begin{equation*}
\mu(K_{\lambda})\leq
\sum_{j=0}^{[\lambda^{5}]}
\mu(K_{\lambda,j})\leq
C\lambda^{5}e^{-c\lambda}
\leq C e^{-\frac{c}2\lambda}\,,
\end{equation*}
which was the claim.
\end{proof}

\begin{prop}\label{prop.lp}
For all $1\leq p<\infty$, there exists $\kappa_{p}>0$  so that for all $0<\kappa\leq \kappa_{p}$ there exists $C>0$ such that for every $N\geq 1$.
$$
\Big\|
\chi\big(\|u_{N}\|_{L^2(\T)}\big)
\e^{\frac{3}{4}f_{N}(u)-\frac12\int_{\T}|u_{N}(x)|^6\text{d}x}
\Big\|_{L^p(\text{d}\mu(u))}\leq C\,.
$$
\end{prop}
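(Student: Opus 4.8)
The plan is to reduce the $L^p$ bound to a large deviation estimate for the density $G_N$, and then to control the two ``dangerous'' exponents — the derivative term $\frac34 f_N(u)$ and the negative term $-\frac12\int_{\T}|u_N|^6$ — using the stochastic estimates of Sections \ref{Section2} and \ref{Section4}. Since $0\leq\chi\leq 1$ and $-\frac12\int_{\T}|u_N|^6\leq 0$, the density is pointwise bounded by
\begin{equation*}
\chi\big(\|u_N\|_{L^2}\big)\,\e^{\frac34 f_N(u)-\frac12\int_{\T}|u_N|^6}\leq \mathbf{1}_{\{\|u_N\|_{L^2}\leq\kappa\}}\,\e^{\frac34 f_N(u)}.
\end{equation*}
It therefore suffices to show that, for $\kappa$ small enough, the function $F_N(u)=\mathbf{1}_{\{\|u_N\|_{L^2}\leq\kappa\}}\,\e^{\frac34 f_N(u)}$ lies in $L^p(\dd\mu)$ with a bound uniform in $N$. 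By the layer-cake formula this amounts to an exponential-type bound on the distribution function $\mu\big(F_N(u)>\tau\big)$, i.e.\ one wants an estimate of the form $\mu\big(u:\|u_N\|_{L^2}\leq\kappa,\ f_N(u)>\lambda\big)\leq C\e^{-c\lambda}$ for all $\lambda\geq 1$, with constants independent of $N$; integrating $p\tau^{p-1}\mu(F_N>\tau)\,\dd\tau$ then closes the argument.

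The key step is to establish that large-deviation bound on $f_N$ restricted to the ball $\{\|u_N\|_{L^2}\leq\kappa\}$. Here I would exploit the definition $f_N(u)=\im\int_{\T}\ov{u_N^2}\,\p_x(u_N^2)\,\dd x$ and bound it crudely as
\begin{equation*}
|f_N(u)|\leq \|u_N^2\|_{L^1}\,\|\p_x(u_N^2)\|_{L^\infty}\leq \|u_N\|_{L^2}^2\,\|\p_x(u_N^2)\|_{L^\infty}\leq \kappa^2\,\|\p_x(u_N^2)\|_{L^\infty}
\end{equation*}
on the set $\{\|u_N\|_{L^2}\leq\kappa\}$. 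Thus on this set $f_N(u)>\lambda$ forces $\|\p_x(u_N^2)\|_{L^\infty}>\lambda/\kappa^2$, and the event in question is contained in $\{\|\p_x(u_N^2)\|_{L^\infty}>\lambda/\kappa^2,\ \|u_N\|_{L^2}\leq\kappa\}$. This is precisely the event controlled by Proposition \ref{prop1}: for $\kappa\leq\kappa_0$, $\lambda\geq 2$ and $1\leq N\leq\lambda$ it gives $\mu(\cdots)\leq C\e^{-c\lambda/\kappa^2}$. The restriction $N\leq\lambda$ in Proposition \ref{prop1} is harmless because the total number of Fourier modes is finite: for the finitely many remaining values $\lambda<N$, the density $\mathbf{1}_{\{\|u_N\|_{L^2}\leq\kappa\}}\e^{\frac34 f_N}$ is bounded by $\e^{C\kappa^2 N}$ using the same crude inequality together with the deterministic bound $\|\p_x(u_N^2)\|_{L^\infty}\lesssim N^{?}\|u_N\|_{L^2}^2$, and this region contributes a bounded amount to the $L^p$ integral after one checks it does not blow up; the clean way, which I would follow, is simply to integrate the tail bound over $\lambda\geq 1$ directly, absorbing the low-$\lambda$ regime into the constant.

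Putting these together, for $p\geq 1$ I estimate
\begin{equation*}
\int_{H^{\s}(\T)} F_N(u)^p\,\dd\mu(u)=p\int_0^\infty \tau^{p-1}\,\mu\big(F_N>\tau\big)\,\dd\tau,
\end{equation*}
and substituting $\tau=\e^{3\lambda/4}$ (so that $F_N>\tau$ corresponds to $f_N>\lambda$ on the $L^2$-ball) converts the tail bound $\mu\leq C\e^{-c\lambda/\kappa^2}$ into an integrand $\lesssim \e^{3p\lambda/4}\e^{-c\lambda/\kappa^2}$, which is integrable once $\kappa$ is taken small enough that $c/\kappa^2 > 3p/4$; this is exactly where the $p$-dependent threshold $\kappa_p$ enters, and it forces $\kappa_p\to 0$ as $p\to\infty$. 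The main obstacle is ensuring the large-deviation constant survives division by $\kappa^2$ and still beats the exponential weight $\e^{3p\lambda/4}$ coming from $\e^{\frac34 f_N}$; this is precisely the smallness condition on $\kappa$, and choosing $\kappa_p$ small enough makes the competing exponentials favorable. Because all the constants in Proposition \ref{prop1} are uniform in $N$, the resulting bound $\|F_N\|_{L^p(\dd\mu)}\leq C$ is uniform in $N$, which is the assertion.
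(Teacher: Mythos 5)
Your reduction of the statement to a tail bound for $f_{N}$ on the set $\{\|u_{N}\|_{L^{2}}\leq\kappa\}$, the crude estimate $|f_{N}(u)|\leq\kappa^{2}\|\p_{x}(u_{N}^{2})\|_{L^{\infty}}$, the appeal to Proposition \ref{prop1}, and the bookkeeping of exponents (requiring $c/\kappa^{2}>3p/4$, hence a $p$-dependent $\kappa_{p}$) all coincide with the paper's argument. However, there is a genuine gap in the way you dismiss the hypothesis $N\leq\lambda$ of Proposition \ref{prop1}. You apply that proposition with threshold $\lambda/\kappa^{2}$, so it is only available when $N\leq\lambda/\kappa^{2}$, i.e.\ when $\lambda\geq\kappa^{2}N$. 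For a fixed large $N$ the excluded region is the whole interval $1\leq\lambda<\kappa^{2}N$ (not ``finitely many values''), and on that interval the only estimate you have is the trivial $\mu\leq 1$; its contribution to the layer-cake integral is of order $\e^{3p\kappa^{2}N/4}$, which is \emph{not} uniform in $N$. The deterministic bound $\|\p_{x}(u_{N}^{2})\|_{L^{\infty}}\lesssim N^{5/2}\|u_{N}\|_{L^{2}}^{2}$ does not help either, since it also grows with $N$. So the low-$\lambda$ regime cannot simply be ``absorbed into the constant''.

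The missing idea is the one the paper uses to close exactly this regime: introduce the logarithmic truncation $N_{0}=\ln\lambda$ (in the paper's parametrisation, where $\lambda$ is the level of the density itself, so the corresponding level for $f_{N}$ is $\tfrac43\ln\lambda$). When $N\leq N_{0}$ your argument works verbatim. When $N>N_{0}$ one writes $f_{N}=f_{N_{0}}+(f_{N}-f_{N_{0}})$ and estimates the two events separately: the event $\{|f_{N_{0}}(u)|>\tfrac12\ln\lambda,\ \|u_{N}\|_{L^{2}}\leq\kappa\}$ is handled by Proposition \ref{prop1}, whose hypothesis is now satisfied because $N_{0}=\ln\lambda$ is below the relevant threshold $\tfrac{c_{1}}{\kappa^{2}}\ln\lambda$; the event $\{|f_{N}(u)-f_{N_{0}}(u)|>\tfrac12\ln\lambda\}$ is handled by Corollary \ref{lem.dist}, which gives $\mu\leq C\e^{-\delta(\ln\lambda)^{(1+\a)/2}}\leq C_{L}\lambda^{-L}$ for every $L$. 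This second ingredient --- the Wiener-chaos-based convergence estimate of Proposition \ref{prop.cauchy} and Corollary \ref{lem.dist}, which gains a power $N_{0}^{\a}$ precisely because $f_{N}-f_{N_{0}}$ only involves high frequencies --- is indispensable and is absent from your proposal; without it the uniformity in $N$ does not follow.
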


\begin{proof}
Here we can follow the proof of \cite[Proposition 4.9]{Tzvetkov3}. To prove the proposition, it is sufficient to show that the integral
\begin{equation}\label{integrale}
\int_{0}^{\infty}\lambda^{p-1}\mu(A_{\lambda,N})d\lambda,
\end{equation}
is convergent uniformly with respect to $N$ for $\kappa>0$ small enough and  where 
\begin{equation*}
A_{\lambda,N}=
\Big\{u\in {H}^{\sigma}\,:\,
\chi\big(\|u_{N}\|_{L^2(\T)}\big)
\e^{\frac{3}{4}f_{N}(u)-\frac12\int_{\T}|u_{N}(x)|^6\text{d}x}>\lambda
\Big\}.
\end{equation*}~
We set $N_{0}=\ln \lambda$.\\[3pt]
\noindent $\bullet$ Assume that $N_0\geq N$. \\
On the support of $\chi$, $\|u_{N}\|_{L^{2}(\T)}\leq \kappa$, thus we have
\begin{eqnarray*}
|f_{N}(u)|=\big|\im \int_{\T}\ov{u_{N}(x)}^2\,\p_{x}(u_{N}(x)^{2})\text{d}x\big| &\leq &C
\|u_{N}\|_{L^2(\T)}^{2}\|\p_{x}(u_{N}^{2})\|_{{L}^{\infty}(\T)}\\
&\leq &C\kappa^{2}\|\p_{x}(u_{N}^{2})\|_{{L}^{\infty}(\T)}\,.
\end{eqnarray*}
Then by Proposition \ref{prop1} (which can be applied, since $N\leq N_{0}=\ln \lambda\leq \frac{c_{1}}{\kappa^{2}}\ln{\lambda}$ for $\kappa>0$ small enough), we obtain 
\begin{eqnarray*}
\mu(A_{\lambda,N})&\leq &\mu \Big(u\in { H}^{\sigma}\,:\, 
|f_{N}(u)|>\frac43 \ln \lambda,\;\;\|u_{N}\|_{L^{2}(\T)}\leq \kappa\,\Big)\nonumber \\
&\leq &\mu \Big(u\in { H}^{\sigma}\,:\, 
\|\p_{x}(u_{N}^{2})\|_{{L}^{\infty}(\T)}>\frac{c_{1}}{\kappa^{2}}\ln \lambda,\;\;\|u_{N}\|_{L^{2}(\T)}\leq \kappa\,\Big)\nonumber\\
&\leq &C\e^{-\frac{c_{2}}{\kappa^{2}}\ln \lambda}=C\lambda^{-\frac{c_{2}}{\kappa^{2}}},
\end{eqnarray*}
where $c_{2}$ is independent of $\kappa$. Hence the integral \eqref{integrale} is convergent if $\kappa=\kappa_{p}>0$ is small enough.\\[3pt]
\noindent $\bullet$ Assume now  $N>N_0$. \\
Thanks to the triangle inequality
$A_{\lambda,N}\subset B_{\lambda,N}\cup C_{\lambda,N}$, where
$$
B_{\lambda,N}\equiv
\Big\{
u\in { H}^{\sigma}\,:\, 
|f_{N_{0}}(u)|>\frac12 \ln \lambda,\;\;\|u_{N}\|_{L^{2}(\T)}\leq \kappa
\Big\},
$$
and
$$
C_{\lambda,N}\equiv
\Big\{
u\in { H}^{\sigma}\,:\, 
|f_{N}(u)-f_{N_{0}}(u)|>\frac12 \ln \lambda,\;\;\|u_{N}\|_{L^{2}(\T)}\leq \kappa
\Big\}.
$$
The measure of $B_{\lambda,N}$ can be estimated exactly as we did in the analysis of
the case $N_0\geq N$.
Finally, by Corollary \ref{lem.dist}, as $N_0=\ln \lambda $, we obtain that for all $1<\a<\frac32$
$$
\mu(C_{\lambda,N})\leq Ce^{-\delta (\ln \lambda)^{\frac{1+\a}2}}
\leq C_{L}\lambda^{-L}\,,
$$
for all $L\geq 1$. This completes the proof of the proposition.
\end{proof}

 \begin{proof}[Proof of Theorem \ref{thm1}] 
 Recall \eqref{def.G}. Let  $p\in [1,+\infty)$ and choose $\kappa_{p}>0$ so that Proposition \ref{prop.lp} holds. Then there exists a subsequence $G_{N_{k}}(u)$ so that $G_{N_{k}}(u)\longrightarrow G(u)$,  $\mu$ a.s. Then by Fatou's lemma, 
  \begin{equation*}
 \int_{H^{\s}(\T)}|G(u)|^{p}\text{d}\mu(u)\leq \liminf_{k\to \infty} \int_{H^{-\s}(\T)}|G_{N_{k}}(u)|^{p}\text{d}\mu(u)\leq C,
 \end{equation*}
 thus  $G(u)\in L^{p}(\text{d}\mu(u))$.\\
 Now it remains to check the convergence in $L^{p}(\text{d}\mu(u))$ for $1\leq p<\infty$.  As in \cite{Tzvetkov3}, for $N\geq 0$ and $\eps>0$,  we introduce the set 
\begin{equation*} 
 A_{N,\eps}=\big\{u \in H^{\s}(\T)\::\:|G_{N}(u)-G(u)|\leq \eps \big\},
 \end{equation*}
 and denote by $\ov{A_{N,\eps}}$ its complement.\\
 Firstly, there exists $C>0$ so that for all $N\geq 0$, $\eps>0$
   \begin{equation*}
 \int_{A_{N,\eps}}\big|G_{N}(u)-G(u)\big|^{p}\text{d}\mu(u)\leq C\eps^{p}.
 \end{equation*}
 Secondly, by Cauchy-Schwarz, Proposition \ref{prop.lp} and as $G(u)\in L^{2p}(\text{d}\mu(u))$, 
 we obtain
   \begin{eqnarray*}
\int_{\ov{A_{N,\eps}}}\big|G_{N}(u)-G(u)\big|^{p}\text{d}\mu(u)&\leq& \|G_{N}-G\|^{p}_{L^{2p}(\text{d}\mu)}\mu(\,\ov{A_{N,\eps}}\,)^{\frac12}\\
&\leq&C \mu(\,\ov{A_{N,\eps}}\,)^{\frac12}.
 \end{eqnarray*}
By \eqref{def.G}, we deduce that for all $\eps>0$,
\begin{equation*}
\mu(\,\ov{A_{N,\eps}}\,)\longrightarrow 0, \quad N\longrightarrow +\infty,
\end{equation*}
 which yields the result. This ends the proof of Theorem \ref{thm1}. 
\end{proof}

\begin{lemm}\label{4.3}
The measure $\rho$ is not trivial
\end{lemm}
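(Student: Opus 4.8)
The plan is to show that $\rho$ has positive total mass, i.e. that $\rho\big(H^{\s}(\T)\big)=\int_{H^{\s}(\T)}G(u)\,\dd\mu(u)>0$. Since $G\geq 0$ everywhere, it is enough to produce a set of positive $\mu$-measure on which $G$ is strictly positive. By \eqref{def.G}, on the set $\big\{u:\|u\|_{L^{2}(\T)}\leq \tfrac{\kappa}2\big\}$ one has $\chi\big(\|u\|_{L^{2}(\T)}\big)=1$, hence $G(u)=\e^{\frac34 f(u)-\frac12\int_{\T}|u|^{6}\dd x}$. This quantity is strictly positive at every $u$ for which $f(u)$ and $\int_{\T}|u|^{6}\dd x$ are finite, and this holds for $\mu$-almost every $u$: indeed $f$ is defined $\mu$-a.e. as the $L^{2}(\dd\mu)$-limit of $(f_{N})$ by Proposition \ref{prop.cauchy}, and $\|u\|_{L^{6}(\T)}<\infty$ for $\mu$-almost every $u$ by Lemma \ref{ld}.

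Thus the statement reduces to the small-ball estimate
\begin{equation*}
\mu\Big(u\in H^{\s}(\T):\|u\|_{L^{2}(\T)}\leq \tfrac{\kappa}2\Big)={\bf p}\Big(\|\phi\|_{L^{2}(\T)}\leq \tfrac{\kappa}2\Big)>0,
\end{equation*}
where, by \eqref{def.phi} and the orthonormality of $\big(\e^{inx}\big)_{n\in\Z}$, one has $\|\phi\|^{2}_{L^{2}(\T)}=\sum_{n\in\Z}\frac{|g_{n}(\om)|^{2}}{\<n\>^{2}}$.

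To establish this, I would split the series at a large cut-off $M$. Choosing $M$ so large that $\sum_{|n|>M}\<n\>^{-2}<\frac{\kappa^{2}}{16}$, the Markov inequality gives ${\bf p}\big(\sum_{|n|>M}|g_{n}|^{2}\<n\>^{-2}>\frac{\kappa^{2}}8\big)<\frac12$, so the tail event $\big\{\sum_{|n|>M}|g_{n}|^{2}\<n\>^{-2}\leq \frac{\kappa^{2}}8\big\}$ has probability at least $\frac12$. On the other hand, the finite-dimensional event $\big\{\sum_{|n|\leq M}|g_{n}|^{2}\<n\>^{-2}\leq \frac{\kappa^{2}}8\big\}$ contains a neighbourhood of the origin in $\C^{2M+1}$ and therefore has strictly positive probability under the non-degenerate Gaussian law of $(g_{n})_{|n|\leq M}$. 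These two events depend on disjoint families of the independent Gaussians $(g_{n})$, hence are independent, and on their intersection $\|\phi\|^{2}_{L^{2}(\T)}\leq \frac{\kappa^{2}}4$. Multiplying the two positive probabilities yields ${\bf p}\big(\|\phi\|_{L^{2}(\T)}\leq \frac{\kappa}2\big)>0$, and consequently
\begin{equation*}
\rho\big(H^{\s}(\T)\big)\geq \int_{\{\|u\|_{L^{2}(\T)}\leq \kappa/2\}}\e^{\frac34 f(u)-\frac12\int_{\T}|u|^{6}\dd x}\,\dd\mu(u)>0.
\end{equation*}

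There is no serious obstacle in this argument; the only point demanding a little care is the control of the infinitely many high frequencies, which is precisely why I would decouple a finite-dimensional block from the tail using independence together with a crude Markov bound, rather than attempting a direct estimate on the full Gaussian series.
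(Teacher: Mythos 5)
Your argument is correct and follows essentially the same route as the paper: reduce to the positivity of the small-ball probability $\mu\big(\|u\|_{L^{2}(\T)}\leq \kappa/2\big)$ together with the $\mu$-a.s. finiteness of $f(u)$ and $\|u\|_{L^{6}(\T)}$ coming from Proposition \ref{prop.cauchy} and Lemma \ref{ld}. The only difference is that you actually justify the small-ball estimate (via the Markov bound on the Gaussian tail and independence of the finite-dimensional block), whereas the paper simply asserts it.
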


\begin{proof}
First observe that for all $\kappa >0$
\begin{equation*}
\mu\big(\,u\in H^{\s}(\T)\;:\; \|u\|_{L^{2}(\T)}\leq \kappa\,\big)={\bf p}\big(\, \om \in \Omega \;:\;\sum_{n\in \Z}\frac{1}{\<n\>^{2}}|g_{n}(\om)|^{2}\leq \kappa^{2}\, \big)>0.
\end{equation*}
Then, by Lemma \ref{ld} and Proposition \ref{prop.cauchy}, the quantities $ \|u\|_{L^{6}(\T)}$ and $f(u)$ are $\mu$ almost surely finite. Hence, the density of $\rho$ does not vanish on a set of positive $\mu$ measure. In other words, $\rho$ is not trivial. 
\end{proof}

\appendix
\section{Appendix}
\subsection{Hamiltonian structure of the transformed form of \eqref{DNLS}}

In this section we give the Hamiltonian structure of the equation related to \eqref{DNLS}.\\
First we define the projection $\Pi$ on the $0$-mean functions :
\begin{equation*}
\Pi\big(f\big)=\sum_{n\in \Z\backslash\{0\}}\a_{n}\e^{inx}, \quad \text{for}\quad f(x)=\sum_{n\in \Z}\a_{n}\e^{inx},
\end{equation*} 
then we introduce the integral operator
\begin{equation*}
\partial^{-1}\,:\,f(x)=\sum_{n\in \Z}\a_{n}\e^{inx} \longmapsto \sum_{n\in \Z\backslash\{0\}}\frac{\a_{n}}{in}\e^{inx}.
\end{equation*} 
Notice that we have 
\begin{equation*}
\partial^{-1}\big(f^{'}\big)=\Pi f =f-\int_{\T}f(x)\,\text{d}x.
\end{equation*} 
 Next we define the operator 
 \begin{equation}\label{def.K}
K(u,v)=\left(\begin{array}{cc} 
-u\partial^{-1}u\cdot & -i+u\partial^{-1}v\cdot \\[2pt]
i+v\partial^{-1}u\cdot & -v\partial^{-1}v\cdot 
\end{array} \right). 
\end{equation}

\begin{lemm} 
For $u,v$, the operator $K(u,v)$ is skew symmetric : $K(u,v)^{*}=-K(u,v)$.
\end{lemm}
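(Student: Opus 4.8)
The plan is to verify the defining property of skew-symmetry, namely that $\langle K(u,v) X, Y\rangle = -\langle X, K(u,v) Y\rangle$ for all suitable vector fields $X=(x_1,x_2)$ and $Y=(y_1,y_2)$, where the pairing is the natural $L^2$-type inner product on pairs of functions on $\T$. Equivalently, and more cleanly, I would compute the formal adjoint of each of the four matrix entries of $K(u,v)$ as operators acting by multiplication and composition with $\partial^{-1}$, and check that the resulting matrix equals $-K(u,v)$. The only two ingredients needed are that the constant $\pm i$ entries are skew-adjoint (since $(\,i\,)^* = -i$ under the complex $L^2$ pairing, or alternatively these are handled by the off-diagonal symmetry), and that $\partial^{-1}$ is skew-adjoint, i.e. $(\partial^{-1})^* = -\partial^{-1}$.

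The key step is therefore to establish $(\partial^{-1})^{*}=-\partial^{-1}$. First I would record that $\Pi$ is self-adjoint (it is an orthogonal projection onto the zero-mean functions, killing the $n=0$ Fourier mode), and that on its range $\partial^{-1}$ sends $\e^{inx}$ to $(in)^{-1}\e^{inx}$. Since the symbol $(in)^{-1}$ satisfies $\overline{(in)^{-1}} = -(in)^{-1}$ for $n\neq 0$, taking the adjoint on the orthonormal basis $(\e^{inx})_{n\in\Z\setminus\{0\}}$ immediately gives $(\partial^{-1})^{*}=-\partial^{-1}$. Equivalently one may integrate by parts: for zero-mean $f,g$ one has $\int_{\T}(\partial^{-1}f)\,\ov{g}\,\dd x = -\int_{\T} f\,\ov{(\partial^{-1}g)}\,\dd x$, the boundary terms vanishing by periodicity.

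Next I would assemble the adjoint of $K$ entry by entry. For a diagonal entry such as $-u\,\partial^{-1}u\,\cdot$, its adjoint as an operator is $-\ov{u}\,(\partial^{-1})^{*}\,\ov{u}\,\cdot = +\ov{u}\,\partial^{-1}\,\ov{u}\,\cdot$, and one checks this against the requirement that the $(1,1)$ entry of $-K(u,v)$ be $+u\,\partial^{-1}u\,\cdot$. Here I must be careful about whether the pairing is the real bilinear one or the Hermitian one, and about the convention that $u,v$ play the roles of $u$ and $\ov u$ in the Hamiltonian formulation; with the appropriate (real, or suitably conjugated) pairing these match. The off-diagonal entries are checked symmetrically: the $(1,2)$ entry $-i+u\,\partial^{-1}v\,\cdot$ and the $(2,1)$ entry $i+v\,\partial^{-1}u\,\cdot$ must be negatives of each other's adjoints, which reduces to $(-i)^{*}=i$ together with $(u\,\partial^{-1}v)^{*} = -v\,\partial^{-1}u$, the latter again using $(\partial^{-1})^{*}=-\partial^{-1}$.

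The main obstacle is purely bookkeeping rather than conceptual: I expect the delicate point to be fixing the correct duality pairing under which the statement holds, since the entries mix multiplication operators (which are self-adjoint for the Hermitian pairing only after conjugation) with the skew-adjoint operator $\partial^{-1}$ and the scalar $\pm i$ (skew for the complex pairing, symmetric for the real one). Once the pairing convention consistent with the Hamiltonian structure is pinned down, each of the four entries matches $-K(u,v)$ by a one-line adjoint computation, and the lemma follows.
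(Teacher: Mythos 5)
Your proposal is correct and is essentially the paper's own argument, which consists of exactly one line: the lemma is a straightforward entry-by-entry computation whose only nontrivial ingredient is $\big(\partial^{-1}\big)^{*}=-\partial^{-1}$. Your added care about the duality pairing is resolved as you suspect: the relevant pairing is the real bilinear one coming from the variational derivatives, under which multiplication operators (and the scalar $i$) are symmetric, so the constant part $\begin{pmatrix}0&-i\\ i&0\end{pmatrix}$ is skew by its off-diagonal antisymmetry and the remaining entries are skew by the skew-adjointness of $\partial^{-1}$.
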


\begin{proof}
This is a straightforward computation. We only have to use that $\big(\partial^{-1}\big)^{*}=-\p^{-1}$.
\end{proof}

\noindent Define 
\begin{equation*}
H(u,v)=\int_{\T}\p_{x}u\,\p_{x}v+\frac{3}4i\int_{\T}v^{2}\p_{x}(u^{2})+\frac12\int_{\T}u^{3}v^{3}.
\end{equation*} 
Notice that we also have the expressions
\begin{eqnarray*}
H(u,v)&=&-\int_{\T}\partial^{2}_{x}u\,v+\frac{3}4i\int_{\T}v^{2}\p_{x}(u^{2})+\frac12\int_{\T}u^{3}v^{3}\\
&=&-\int_{\T}u\,\partial^{2}_{x}v-\frac{3}4i\int_{\T}u^{2}\p_{x}(v^{2})+\frac12\int_{\T}u^{3}v^{3},
\end{eqnarray*} 
therefore, we can deduce the variational derivatives
\begin{eqnarray}
\frac{\delta H}{\delta u}(u,v)&=&-\partial^{2}_{x}v-\frac32\,i u\, \p_{x}(v^{2})+\frac32u^{2}v^{3}\label{eq.var1}\\[4pt]
\frac{\delta H}{\delta v}(u,v)&=&-\partial^{2}_{x}u+\frac32\,i v\, \p_{x}(u^{2})+\frac32u^{3}v^{2}.\label{eq.var2}
\end{eqnarray} 
We consider the Hamiltonian system 
\begin{equation}
\left(\begin{array}{c}\label{syst}
\partial_{t}u \\[2pt]
\partial_{t}v
\end{array} \right) =K(u,v)
\left(\begin{array}{c}
\frac{\delta H}{\delta u}(u,v) \\[4pt]
\frac{\delta H}{\delta v}(u,v)
\end{array} \right).
\end{equation}
\noindent Denote by 
\begin{equation*}
F_{u}(t)=2\,\text{Im}\,\int_{\T}u\p_{x}\ov{u}+\frac32\int_{\T}|u|^{4},
\end{equation*} 
and notice that for all $t\in \R$, $F_{u}(t)\in \R$.

\begin{prop}
The system \eqref{syst} is a Hamiltonian formulation of the equation 
\begin{equation}\label{eq.gauge}
i\partial_t u+\p_{x}^{2} u   = i\partial_{x}\big(|u|^{2}u\big)+F_{u}(t)u,
\end{equation}
in the coordinates $(u,v)=(u,\ov{u})$.
\end{prop}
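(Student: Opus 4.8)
The plan is to verify directly that the Hamiltonian system \eqref{syst} reproduces \eqref{eq.gauge} when we set $(u,v)=(u,\ov{u})$. First I would write out the system explicitly by multiplying the skew-symmetric matrix $K(u,v)$ from \eqref{def.K} against the column of variational derivatives \eqref{eq.var1}, \eqref{eq.var2}. This produces a pair of coupled evolution equations for $\p_t u$ and $\p_t v$; by the reality structure it suffices to check the first component, the equation for $\p_t u$, since the second is its complex conjugate under $v=\ov u$.

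The main computation is to expand
\begin{equation*}
\p_t u = -u\,\partial^{-1}u\cdot \frac{\delta H}{\delta u}+\big(-i+u\,\partial^{-1}v\cdot\big)\frac{\delta H}{\delta v}.
\end{equation*}
I would substitute the explicit forms of the variational derivatives and collect terms according to their type. The term $-i\,\frac{\delta H}{\delta v}=-i\big(-\p_x^2 u+\tfrac32 i v\,\p_x(u^2)+\tfrac32 u^3 v^2\big)$ should supply the linear dispersive part $i\p_x^2 u$ together with the genuine nonlinearity $i\p_x(|u|^2 u)$ after setting $v=\ov u$ and using $\p_x(|u|^2u)=\p_x(u^2\ov u)$. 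The remaining pieces, namely those carrying a factor $\partial^{-1}$, are the ones I expect to reorganise into the nonlocal correction $F_u(t)u$. Here the key identity is $\partial^{-1}(f')=\Pi f=f-\int_\T f$, which converts the operators $\partial^{-1}u$ and $\partial^{-1}v$ acting on derivative-type expressions into multiplication by (spatial integrals of) quadratic and quartic quantities.

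The hard part will be precisely this last regrouping: showing that all the $\partial^{-1}$-terms, which are a priori genuinely nonlocal, collapse under $v=\ov u$ into the single scalar coefficient $F_u(t)=2\,\text{Im}\int_\T u\,\p_x\ov u+\tfrac32\int_\T|u|^4$ times $u$. I would handle this by writing each $\partial^{-1}$-term so that its argument is recognizably a total derivative (or differs from one by a constant mean), apply $\partial^{-1}\circ\p_x=\Pi$, and then verify that the $x$-dependent parts cancel pairwise using the skew-symmetry of $K$ and integration by parts, leaving only the constant-in-$x$ integral terms that constitute $F_u(t)$. One should also check that the $\tfrac32 u^2 v^3$ and $\tfrac32 u^3 v^2$ contributions from the sextic part of $H$ cancel against each other, so that no extra local nonlinearity survives.

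Finally, having matched the $\p_t u$ equation, I would note that the reality of $F_u(t)$ (stated just before the proposition) guarantees that the term $F_u(t)u$ is of the stated form and that applying complex conjugation recovers the $\p_t v$ equation consistently, confirming that $(u,\ov u)$ is genuinely a solution of the Hamiltonian system iff $u$ solves \eqref{eq.gauge}. The bookkeeping of the nonlocal terms is where essentially all the content lies; the dispersive and gauge nonlinearity emerge immediately from the off-diagonal $\mp i$ entries of $K$.
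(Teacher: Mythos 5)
Your proposal is correct and follows essentially the same route as the paper: expand the first row of $K(u,v)$ against the variational derivatives, use $\partial^{-1}(f')=\Pi f=f-\int_{\T}f$ to rewrite each nonlocal term as a local expression plus its mean (the paper records these as the two integration-by-parts identities for $\partial^{-1}(u\,\p_x^2v)$ and $\partial^{-1}(u^2\p_x(v^2))$), and collect the surviving means into $F_u(t)u$. One small correction to your bookkeeping: the two sextic contributions sitting inside $\partial^{-1}$ do cancel each other exactly, but the bare local term $-\tfrac32 i\,u^3v^2$ coming from $-i\,\frac{\delta H}{\delta v}$ is instead cancelled by the local part of $\partial^{-1}$ applied to the \emph{quartic} terms, whose mean part is precisely what produces the $\tfrac32\int_{\T}|u|^4$ piece of $F_u(t)$ — consistent with your general plan of pairwise cancellation of $x$-dependent parts, just not between the two sextic terms themselves.
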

As a consequence, if we set 
\begin{equation}\label{eq.chgt}
v(t,x)=\e^{i\int_{0}^{t}F_{u}(s)\text{d}s}u(t,x),
\end{equation} 
then $v$ is the solution of the equation
\begin{equation}\label{DNLS1}
\left\{
\begin{aligned}
&i\partial_t v+\p_{x}^{2} v   = i\partial_{x}\big(|v|^{2}v\big),\;\;
(t,x)\in\R\times \mathbb{T},\\
&v(0,x)= u_{0}(x).
\end{aligned}
\right.
\end{equation}
Moreover, if $u$ and $v$ are linked by \eqref{eq.chgt}, we have $F_{u}=F_{v}$.
\begin{proof}
We have 
\begin{equation*}
u\,\p^{2}_{x}v=v\,\p^{2}_{x}u+(u\,\p_{x}v)'-(v\,\p_{x}u)',
\end{equation*}
therefore
\begin{equation}\label{eq.ipp1}
\p^{-1}\big(u\,\p^{2}_{x}v\big)=\p^{-1}\big(v\,\p^{2}_{x}u\big)+u\,\p_{x}v-v\,\p_{x}u-\int_{\T}\big(u\,\p_{x}v-v\,\p_{x}u\big).
\end{equation}
Similarly we obtain the relation
\begin{equation}\label{eq.ipp2}
\p^{-1}\big(u^{2}\,\p_{x}(v^{2})\big)=-\p^{-1}\big(v^{2}\,\p_{x}(u^{2})\big)+u^{2}\,v^{2}-\int_{\T}u^{2}\,v^{2}.
\end{equation}
By \eqref{eq.var1}, \eqref{eq.var2}, using \eqref{eq.ipp1} and \eqref{eq.ipp2}, a straightforward computation gives 
\begin{eqnarray*}
\p_{t}u&=&-u\p^{-1}\big(u\,\frac{\delta H}{\delta u}\big)-i\frac{\delta H}{\delta v}+u\p^{-1}\big(v\,\frac{\delta H}{\delta v}\big)\\
&=& i\p^{2}_{x}u+\p_{x}\big(u^{2}\,v\big)-u\int_{\T}(u\,\p_{x}v-v\,\p_{x}u)-\frac3{2}iu\int_{\T}u^{2}v^{2},
\end{eqnarray*}
and 
\begin{eqnarray*}
\p_{t}v&=&i\frac{\delta H}{\delta u}+  v\p^{-1}\big(u\,\frac{\delta H}{\delta u}\big) -v\p^{-1}\big(v\,\frac{\delta H}{\delta v}\big)\\
&=&- i\p^{2}_{x}v+\p_{x}\big(u\,v^{2}\big)- v\int_{\T}(v\,\p_{x}u-u\,\p_{x}v)-\frac3{2}iu\int_{\T}u^{2}v^{2}.
\end{eqnarray*}
Now assume that $v=\ov{u}$. This yields the result, as 
\begin{equation*}
\int_{\T}(u\,\p_{x}\ov{u}-\ov{u}\,\p_{x}u)=2i\,\text{Im}\,\int_{\T}u\,\p_{x}\ov{u}.
\end{equation*}
\end{proof}

\subsection{Invariance of the measure $\rho_{N}$ under a truncated flow of \eqref{eq.gauge}}
We present here a natural finite dimensional approximation of \eqref{eq.gauge} for which $\rho_{N}$ is an invariant measure.

Let $N\geq 1$. Recall that $E_{N}$ is the  the complex vector space $\dis {E_N={\rm span}\Big( (  \e^{inx})_{-N\leq n\leq N}\Big)}$, and that  $\Pi_{N}$ is the spectral   projector  from $L^{2}(\T)$ to  $E_{N}$.

Let $K$ be given by \eqref{def.K}, and consider  the following system 
\begin{equation}
\left(\begin{array}{c}\label{systN}
\partial_{t}u \\[2pt]
\partial_{t}v
\end{array} \right) =\Pi_{N}K(u_{N},v_{N})\Pi_{N}
\left(\begin{array}{c}
\frac{\delta H}{\delta u}(u_{N},v_{N}) \\[4pt]
\frac{\delta H}{\delta v}(u_{N},v_{N})
\end{array} \right).
\end{equation}
This an Hamiltonian system with Hamiltonian $H(\Pi_{N}u,\Pi_{N}v)$. Now we assume that $v=\ov{u}$ and we compute the equation satisfied by $u_{N}$ : this will be a finite dimensional approximation   of \eqref{eq.gauge}. Denote by $\Pi^{\perp}_{N}=1-\Pi_{N}$, then we have  

\begin{lemm}\label{Lem.eqN}
In the coordinates $v_{N}=\ov{u_{N}}$, the system \eqref{systN} reads
\begin{equation}\label{eq.trunc}
i\partial_{t}u+ \partial_{x}^{2}u_{N}=  i\Pi_{N}\Big( \partial_{x}(|u_{N}|^{2}u_{N}  )\Big)+u_{N}F_{u_{N}}(t)  +  R_{N}(u_{N}),
 \end{equation}
where 
\begin{eqnarray*}
R_{N}(u_{N})&=&\frac32  \Pi_{N} \Big(u_{N}\partial^{-1}\Big[ u_{N}\Pi_{N}^{\perp}\big(u_{N}\partial_{x}(\ov{u_{N}}^{2})\big)+\ov{u_{N}}\Pi_{N}^{\perp}\big(\ov{u_{N}}\partial_{x}({u_{N}}^{2})\big)\Big]\Big)\\
&&+\frac32i  \Pi_{N} \Big( u_{N}\partial^{-1}\Big[ {u_{N}}\Pi^{\perp}_{N}\big(|u_{N}|^{4}\ov{u_{N}}\big)-\ov{u_{N}}\Pi^{\perp}_{N}\big(|u_{N}|^{4}{u_{N}}\big) \Big] \Big).
\end{eqnarray*}
\end{lemm}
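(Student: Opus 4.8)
The plan is to derive equation \eqref{eq.trunc} directly from the system \eqref{systN} by substituting the variational derivatives \eqref{eq.var1} and \eqref{eq.var2} and carefully tracking how the projector $\Pi_N$ interacts with the operator $K$. The key point is that the computation should mirror the one already carried out in the proof of the preceding proposition for the full (non-truncated) system \eqref{syst}, but now with every product replaced by its projected counterpart. First I would write out the first component of \eqref{systN} explicitly: since $v_N=\ov{u_N}$ and $\Pi_N u = u_N$, the equation reads
\begin{equation*}
\partial_t u = \Pi_N\Big(-u_N\,\partial^{-1}\big(u_N\,\tfrac{\delta H}{\delta u}\big)-i\,\tfrac{\delta H}{\delta v}+u_N\,\partial^{-1}\big(\ov{u_N}\,\tfrac{\delta H}{\delta v}\big)\Big),
\end{equation*}
where the variational derivatives are evaluated at $(u_N,\ov{u_N})$. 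I would substitute \eqref{eq.var1}--\eqref{eq.var2} and expand.

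The main structural step is to reorganise the resulting terms into three groups: the ones that reproduce the full-system identity, the forcing term $u_N F_{u_N}(t)$, and the remainder $R_N(u_N)$. In the non-truncated computation the integration-by-parts identities \eqref{eq.ipp1} and \eqref{eq.ipp2} were used to collapse expressions like $\partial^{-1}(u\,\partial_x^2 v)$ and $\partial^{-1}(u^2\partial_x(v^2))$ into exact terms plus the mean-value contributions that build $F_u$. Here the same identities still apply to the ``inner'' products before projection, but the extra projectors $\Pi_N$ sitting inside $K$ obstruct the cancellations: wherever the full computation used that $\partial^{-1}(v\,\partial_x^2 u)+\partial^{-1}(u\,\partial_x^2 v)$ telescopes, the truncated version only telescopes modulo $\Pi_N^\perp(\cdots)$. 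I would therefore add and subtract the projector $\Pi_N^\perp = 1-\Pi_N$ at each such place, so that the ``$\Pi_N$ part'' reconstructs exactly the terms $\partial_x^2 u_N$, $i\Pi_N(\partial_x(|u_N|^2 u_N))$ and $u_N F_{u_N}(t)$ appearing in \eqref{eq.trunc}, while the leftover ``$\Pi_N^\perp$ part'' is precisely $R_N(u_N)$.

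Concretely, the quadratic-in-$\partial_x$ terms coming from $-u_N\partial^{-1}(u_N\cdot(-\partial_x^2\ov{u_N}))$ and $u_N\partial^{-1}(\ov{u_N}\cdot(-\partial_x^2 u_N))$ should be combined via \eqref{eq.ipp1}; the uncancelled piece is the first line of $R_N$, namely
\begin{equation*}
\tfrac32\Pi_N\Big(u_N\partial^{-1}\big[u_N\Pi_N^\perp(u_N\partial_x(\ov{u_N}^2))+\ov{u_N}\Pi_N^\perp(\ov{u_N}\partial_x(u_N^2))\big]\Big),
\end{equation*}
and likewise the sextic terms arising from $\tfrac32 u^2 v^3$ and $\tfrac32 u^3 v^2$ in the variational derivatives produce, after \eqref{eq.ipp2}, the second line of $R_N$. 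I expect the main obstacle to be purely bookkeeping: keeping track of the signs, the factors of $i$, and the correct placement of $\Pi_N$ versus $\Pi_N^\perp$ so that the mean-value terms assemble into $F_{u_N}(t)=2\,\mathrm{Im}\int_\T u_N\partial_x\ov{u_N}+\tfrac32\int_\T|u_N|^4$ exactly as in the full computation. No new analytic input is needed beyond \eqref{eq.ipp1}, \eqref{eq.ipp2}, the self-adjointness relation $(\partial^{-1})^*=-\partial^{-1}$, and the identity $\int_\T(u_N\partial_x\ov{u_N}-\ov{u_N}\partial_x u_N)=2i\,\mathrm{Im}\int_\T u_N\partial_x\ov{u_N}$ already recorded above; the verification is therefore a finite, if somewhat lengthy, algebraic check.
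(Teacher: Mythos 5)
Your strategy is the same as the paper's: write out the first component of \eqref{systN} with the projected variational derivatives, apply \eqref{eq.ipp1} and \eqref{eq.ipp2}, and split $\Pi_{N}=1-\Pi_{N}^{\perp}$ so that the unprojected parts reassemble $\partial_{x}^{2}u_{N}$, $i\Pi_{N}\big(\partial_{x}(|u_{N}|^{2}u_{N})\big)$ and $u_{N}F_{u_{N}}(t)$, while the $\Pi_{N}^{\perp}$ leftovers constitute $R_{N}$. However, your bookkeeping of where the two lines of $R_{N}$ come from is wrong and would need correcting when you carry out the computation. The second-order terms $u_{N}\partial^{-1}(u_{N}\partial_{x}^{2}\ov{u_{N}})-u_{N}\partial^{-1}(\ov{u_{N}}\partial_{x}^{2}u_{N})$ produce \emph{no} $\Pi_{N}^{\perp}$ remainder at all: since $\partial_{x}^{2}u_{N}$ and $\partial_{x}^{2}\ov{u_{N}}$ already lie in $E_{N}$, the inner projector acts as the identity there, and \eqref{eq.ipp1} collapses this pair exactly into $\Pi_{N}\big(u_{N}^{2}\partial_{x}\ov{u_{N}}-|u_{N}|^{2}\partial_{x}u_{N}\big)$ plus the term $-u_{N}\int_{\T}(u_{N}\partial_{x}\ov{u_{N}}-\ov{u_{N}}\partial_{x}u_{N})$, i.e.\ the $2\,\text{Im}\int_{\T}u_{N}\partial_{x}\ov{u_{N}}$ part of $F_{u_{N}}$. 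The first line of $R_{N}$ actually arises from the derivative-cubic terms $-\frac32 i\,u\,\partial_{x}(v^{2})$ and $+\frac32 i\,v\,\partial_{x}(u^{2})$ in \eqref{eq.var1}--\eqref{eq.var2}: writing the inner $\Pi_{N}$ as $1-\Pi_{N}^{\perp}$, the unprojected pair is collapsed by \eqref{eq.ipp2} into $|u_{N}|^{4}-\int_{\T}|u_{N}|^{4}$ (whence the $\frac32\int_{\T}|u_{N}|^{4}$ piece of $F_{u_{N}}$ and a cancellation with the quintic part of $-i\,\ov{f_{N}}$), and the $\Pi_{N}^{\perp}$ pair is exactly the first line of $R_{N}$. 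Finally, the second line of $R_{N}$ does not use \eqref{eq.ipp2} either: it comes from the quintic terms, for which the unprojected combination $\ov{u_{N}}\cdot|u_{N}|^{4}u_{N}-u_{N}\cdot|u_{N}|^{4}\ov{u_{N}}$ vanishes identically, leaving only the $\Pi_{N}^{\perp}$ contributions (this is identity \eqref{Nn2}). With these attributions corrected, your plan reproduces the paper's proof.
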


\begin{proof}
The  proof is   a direct computation. By \eqref{systN}, the equation on $u_{N}$ reads 
\begin{equation}\label{eq1}
\partial_{t}u=\Pi_{N}\Big( -u_{N}\partial^{-1}(u_{N}f_{N})-i\ov{f_{N}}+u_{N}\partial^{-1}(\ov{u_{N}f_{N}})\Big),
\end{equation}
where 
\begin{equation*}
f_{N}=\Pi_{N}\Big( -\partial^{2}_{x} \ov{u_{N}}  -\frac32i u_{N} \partial_{x}(\ov{u_{N}}^{2})+\frac32 |u_{N}|^{4}\ov{u_{N}}\Big).
\end{equation*}
Thanks to \eqref{eq.ipp1}  we deduce from \eqref{eq1} that
\begin{multline}\label{eqq}
\partial_{t}u=i \partial_{x}^{2}u_{N}+\frac32 \Pi_{N}\Big(\ov{u_{N}}\partial_{x}(u^{2}_{N}  )\Big)-\frac32 i\Pi_{N}\Big(   |u_{N}|^{4} {u_{N}})\Big) +\\
\begin{aligned}
&+  \Pi_{N}\Big(u_{N}^{2}\partial_{x}\ov{u_{N}}-|u_{N}|^{2}\partial_{x}u_{N}   \Big)-u_{N}\int_{\T}(u_{N}\partial_{x}\ov{u_{N}}-\ov{u_{N}}\partial_{x}{u_{N}})+\\
&+\frac32 i \Pi_{N} \Big( u_{N}\partial^{-1}\Big[ u_{N}\Pi_{N}\big(u_{N}\partial_{x}(\ov{u_{N}}^{2})\big)+\ov{u_{N}}\Pi_{N}\big(\ov{u_{N}}\partial_{x}({u_{N}}^{2})\big)\Big] \Big)+\\
&+\frac32  \Pi_{N} \Big( u_{N}\partial^{-1}\Big[ \ov{u_{N}}\Pi_{N}\big(|u_{N}|^{4}{u_{N}}\big)- {u_{N}}\Pi_{N}\big(|u_{N}|^{4}\ov{u_{N}}\big)\big)\Big] \Big).
\end{aligned}
\end{multline}
Using \eqref{eq.ipp2} we obtain, with $\Pi_{N}^{\perp}=1-\Pi_{N}$
\begin{multline}\label{Nn1}
\partial^{-1}\Big[ u_{N}\Pi_{N}\big(u_{N}\partial_{x}(\ov{u_{N}}^{2})\big)+\ov{u_{N}}\Pi_{N}\big(\ov{u_{N}}\partial_{x}({u_{N}}^{2})\big)\Big] =\\
\begin{aligned}
&=-\partial^{-1}\Big[ u_{N}\Pi_{N}^{\perp}\big(u_{N}\partial_{x}(\ov{u_{N}}^{2})\big)+\ov{u_{N}}\Pi_{N}^{\perp}\big(\ov{u_{N}}\partial_{x}({u_{N}}^{2})\big)\Big]   +\partial^{-1}\Big[u_{N}^{2}\partial_{x}(\ov{u_{N}}^{2})+\ov{u_{N}}^{2}\partial_{x}({u_{N}}^{2})\Big]  \\
&=-\partial^{-1}\Big[ u_{N}\Pi_{N}^{\perp}\big(u_{N}\partial_{x}(\ov{u_{N}}^{2})\big)+\ov{u_{N}}\Pi_{N}^{\perp}\big(\ov{u_{N}}\partial_{x}({u_{N}}^{2})\big)\Big]   +|u_{N}|^{4} -\int_{\T}|u_{N}|^{4}.    \\
 \end{aligned}
\end{multline}
We can also write 
\begin{multline}\label{Nn2}
 \ov{u_{N}}\Pi_{N}\big(|u_{N}|^{4}{u_{N}}\big)- {u_{N}}\Pi_{N}\big(|u_{N}|^{4}\ov{u_{N}}\big)\big)  =\\
 -\ov{u_{N}}\Pi^{\perp}_{N}\big(|u_{N}|^{4}{u_{N}}\big)+ {u_{N}}\Pi^{\perp}_{N}\big(|u_{N}|^{4}\ov{u_{N}}\big)\big).
 \end{multline}
Thus, by \eqref{Nn1} and \eqref{Nn2}, equation \eqref{eqq} becomes
\begin{eqnarray*}
\partial_{t}u&=& i \partial_{x}^{2}u_{N}+  \Pi_{N}\Big( \partial_{x}(|u_{N}|^{2}u_{N}  )\Big)-iu_{N}F_{u_{N}}(t)  +   \nonumber \\
&&-\frac32i  \Pi_{N} \Big(u_{N}\partial^{-1}\Big[ u_{N}\Pi_{N}^{\perp}\big(u_{N}\partial_{x}(\ov{u_{N}}^{2})\big)+\ov{u_{N}}\Pi_{N}^{\perp}\big(\ov{u_{N}}\partial_{x}({u_{N}}^{2})\big)\Big]\Big)\\
&&+\frac32  \Pi_{N} \Big( u_{N}\partial^{-1}\Big[ {u_{N}}\Pi^{\perp}_{N}\big(|u_{N}|^{4}\ov{u_{N}}\big)\big)-\ov{u_{N}}\Pi^{\perp}_{N}\big(|u_{N}|^{4}{u_{N}}\big) \Big] \Big),
\end{eqnarray*}
which is the claim.
\end{proof}

In the sequel we fix $\s<\frac12$, and we consider \eqref{ODEN} as a Cauchy problem with initial condition in  $H^{\s}(\T)$
 \begin{equation}\label{ODE}
\left\{
\begin{aligned}
&i\partial_{t}u+ \partial_{x}^{2}u_{N}=  i\Pi_{N}\Big( \partial_{x}(|u_{N}|^{2}u_{N}  )\Big)+u_{N}F_{u_{N}}(t)  +  R_{N}(u_{N}),\;\;
(t,x)\in\R\times \mathbb{T},\\
&u(0,x)=  u_{0}(x) \in H^{\s}(\T).
\end{aligned}
\right.
\end{equation}
We now state the main result of this section.
 \begin{prop}\label{Prop.inv}
The equation \eqref{ODE} has a well-defined global flow $\Phi_{N}$. Moreover, the measure $\rho_{N}$ is invariant under  $\Phi_{N}$  : For any Borel set $A\subset H^{\s}(\T)$ and for all $t\in \R$, $\rho_{N}\big(\Phi_{N}(t)(A)\big)=\rho_{N}\big( A\big)$.
\end{prop}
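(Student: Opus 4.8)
The plan is to reduce the statement to a finite-dimensional problem on $E_N$ and then combine Liouville's theorem with the conservation laws of \eqref{ODE}. First I would observe that the right-hand side of \eqref{ODE} takes values in $E_N$: each of the terms $i\Pi_N\big(\partial_x(|u_N|^2u_N)\big)$, $u_N F_{u_N}$ and $R_N(u_N)$, together with $\partial_x^2 u_N$, lies in $E_N$, so that $\partial_t u=\partial_t u_N\in E_N$ and $\Pi_N^{\perp}u(t)$ stays frozen equal to $\Pi_N^{\perp}u_0$. Thus \eqref{ODE} is genuinely a closed ODE for the finite-dimensional unknown $u_N\in E_N$, with a polynomial, hence locally Lipschitz, vector field $X$. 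Local existence and uniqueness follow from Cauchy--Lipschitz; to upgrade to a global flow I would use that $\|u_N\|_{L^2(\T)}$ is conserved along \eqref{ODE} (mass conservation, checked by pairing the equation with $\ov{u_N}$, integrating and taking the real part, the dispersive term and the gauge term $u_N F_{u_N}$ contributing only purely imaginary quantities). On $E_N$ the $L^2$ norm controls all the Fourier coordinates, so trajectories remain in a fixed compact set and the flow $\Phi_N$ is globally defined; it is a diffeomorphism of $E_N$, acting as the identity on $\Pi_N^{\perp}u$.

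Next I would identify the density of $\rho_N$. Since $\mu$ is a product of independent Gaussians over the Fourier modes, it factorizes as $\mu=\mu^{<N}\otimes\mu^{>N}$, and since $G_N$ depends only on $u_N$ we have $\rho_N=\big(G_N\,d\mu^{<N}\big)\otimes\mu^{>N}$. The factor $\mu^{>N}$ is left invariant because $\Phi_N$ is the identity on $\Pi_N^{\perp}u$, so everything reduces to the invariance of $\nu:=G_N\,d\mu^{<N}$ under the finite-dimensional flow. Writing $c_n=a_n+ib_n$ and using $\<n\>^2=n^2+1$, the Gaussian weight of $\mu^{<N}$ is proportional to $\exp\big(-\sum_{|n|\le N}\<n\>^2|c_n|^2\big)=\exp\big(-\|\partial_x u_N\|_{L^2(\T)}^2-\|u_N\|_{L^2(\T)}^2\big)$. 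Combining this with $G_N$ and recalling that $H(u_N)=\|\partial_x u_N\|_{L^2(\T)}^2-\tfrac34 f_N(u)+\tfrac12\int_{\T}|u_N|^6$, the density of $\nu$ with respect to the Lebesgue measure $\prod_{|n|\le N}da_n\,db_n$ becomes proportional to
\[
w(u_N)=\chi\big(\|u_N\|_{L^2(\T)}\big)\,e^{-H(u_N)-\|u_N\|_{L^2(\T)}^2}.
\]

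Finally I would prove invariance of $\nu$ by the standard Liouville-plus-conservation argument. The density $w$ is constant along trajectories: $\|u_N\|_{L^2(\T)}$ is conserved, hence so are $\chi(\|u_N\|_{L^2(\T)})$ and $e^{-\|u_N\|^2}$, while $H(u_N)$ is conserved because \eqref{ODE} derives from the Hamiltonian system \eqref{systN}, so that $\frac{d}{dt}H=\<\nabla H,K\nabla H\>=0$ by the skew-symmetry $K^{*}=-K$. Invariance of $\nu=w\,dz$ is then equivalent to $\operatorname{div}(wX)=0$; since $X\cdot\nabla w=\frac{d}{dt}w(u_N(t))=0$, this collapses to the Liouville identity $\operatorname{div}X=0$. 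I expect this last point to be the main obstacle: because the Poisson operator $K(u,v)$ of \eqref{def.K} is not constant, volume preservation is not automatic. Writing $X=K\nabla H$ in the real coordinates, the Hessian contribution $\sum_{i,j}K_{ij}\partial_i\partial_j H$ vanishes by skew-symmetry, but there remains the term $\sum_j\big(\sum_i\partial_i K_{ij}\big)\partial_j H$, whose vanishing requires that each column of $K$ be divergence-free. I would establish this directly in Fourier coordinates, exploiting the explicit structure of the entries $u\partial^{-1}u$, $v\partial^{-1}v$, $u\partial^{-1}v$, $v\partial^{-1}u$ and the cancellations produced by the projectors $\Pi_N$; equivalently one may verify $\operatorname{div}X=0$ on the explicit form \eqref{eq.trunc}, where the contributions of the cubic term, of the gauge term $u_N F_{u_N}$ and of the remainder $R_N$ must be seen to cancel. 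Once $\operatorname{div}X=0$ is in hand, $\nu$ is invariant, and therefore so is $\rho_N$.
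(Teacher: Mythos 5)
Your argument is correct and follows essentially the same route as the paper: freeze the high frequencies, obtain a global finite-dimensional flow on $E_N$ from conservation of $\|u_N\|_{L^2}$, observe that the density of $\rho_N$ restricted to $E_N$ is a function of the conserved quantities $H(u_N)$ and $\|u_N\|_{L^2(\T)}$, and conclude by Liouville's theorem tensored with the invariance of $\mu$ under the identity flow on $E_N^{\perp}$. The one step you leave unverified --- that $\operatorname{div}X=0$ despite the non-constant Poisson operator $K$, equivalently that each column of $K$ is divergence-free in the real coordinates --- is precisely the step the paper itself does not carry out (it is delegated to \cite{BTT}), and your identification of it as the actual content of the Liouville argument is accurate.
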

For the proof of Proposition \ref{Prop.inv}, we first need the following result 
\begin{lemm}
The equation 
\begin{equation}\label{ODEN}
\left\{
\begin{aligned}
&i\partial_{t}u+ \partial_{x}^{2}u_{N}=  i\Pi_{N}\Big( \partial_{x}(|u_{N}|^{2}u_{N}  )\Big)+u_{N}F_{u_{N}}(t)  +  R_{N}(u_{N}),\;\;
(t,x)\in\R\times \mathbb{T},\\
&u(0,x)=\Pi_{N}\big( u_{0}(x)\big)\in E_{N}.
\end{aligned}
\right.
\end{equation}
is an Hamiltonian ODE. Moreover, the mass $\|u(t)\|_{L^{2}(\T)}$ is conserved under the flow of \eqref{ODEN}. As a consequence, \eqref{ODEN} has a well-defined global flow $\wt{\Phi}_{N}$.
\end{lemm}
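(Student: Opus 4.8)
The plan is to read \eqref{ODEN} as a genuine finite-dimensional ODE and to derive globality from a conservation law. First I would note that the right-hand side of \eqref{ODEN} belongs to $E_{N}$ (every nonlinear term carries a projector $\Pi_{N}$, and $\p_{x}^{2}u_{N}\in E_{N}$), so $E_{N}$ is invariant under the evolution and $u(t)=u_{N}(t)$ for all $t$; thus \eqref{ODEN} is a system of ordinary differential equations in the $2(2N+1)$ real coordinates $(a_{n},b_{n})_{|n|\le N}$. Its Hamiltonian nature is inherited from the construction: by Lemma \ref{Lem.eqN}, \eqref{ODEN} is precisely the system \eqref{systN} restricted to the real slice $\{v_{N}=\ov{u_{N}}\}$, and \eqref{systN} is Hamiltonian with Hamiltonian $H(\Pi_{N}u,\Pi_{N}v)$ and the skew-symmetric structure $\Pi_{N}K(u_{N},v_{N})\Pi_{N}$.

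The heart of the statement is the conservation of mass, which I would prove from the Hamiltonian structure rather than from the explicit (and unwieldy) expression for $R_{N}$. Introduce $M(u,v)=\int_{\T}uv\,\dd x$, so that on the slice $v=\ov{u}$ one has $M=\|u_{N}\|_{L^{2}(\T)}^{2}$. Differentiating $M$ along \eqref{systN}, and using that $\Pi_{N}$ is self-adjoint and $K$ skew-adjoint for the bilinear pairing $\int_{\T}fg\,\dd x$ (the skew-symmetry of $K$ being already established in the paper), I would transfer the operator onto $(v_{N},u_{N})^{\top}$. A direct computation from \eqref{def.K} gives the decisive cancellation
\[ K(u_{N},v_{N})(v_{N},u_{N})^{\top}=(-iu_{N},\,iv_{N})^{\top}, \]
since the two $\p^{-1}$ contributions in each row cancel. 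Consequently $\frac{\dd}{\dd t}M=i\int_{\T}\big(u_{N}\,\tfrac{\delta H}{\delta u}-v_{N}\,\tfrac{\delta H}{\delta v}\big)(u_{N},v_{N})\,\dd x$. Inserting \eqref{eq.var1} and \eqref{eq.var2}, the quintic terms cancel identically, the two Laplacian terms cancel after integrating by parts twice, and the cubic terms recombine into $\int_{\T}\p_{x}(u_{N}^{2}v_{N}^{2})\,\dd x=0$. Hence $M$ is conserved, and on $\{v=\ov{u}\}$ so is $\|u_{N}\|_{L^{2}(\T)}$.

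With mass conservation in hand I would conclude globality by the standard finite-dimensional argument. The vector field on the right-hand side of \eqref{ODEN} is polynomial in the coordinates $(a_{n},b_{n})_{|n|\le N}$: the operators $\p_{x}$, $\p^{-1}$, $\Pi_{N}$ and $\Pi_{N}^{\perp}$ act boundedly on the finitely many relevant Fourier modes, and $F_{u_{N}}$ is itself a polynomial in the coordinates, so the system is autonomous and smooth. The Cauchy--Lipschitz theorem then furnishes a unique maximal solution. Since all norms on the finite-dimensional space $E_{N}$ are equivalent, the conserved quantity $\|u_{N}(t)\|_{L^{2}(\T)}=\|u_{N}(0)\|_{L^{2}(\T)}$ bounds the coordinate vector uniformly on the maximal interval of existence; by the usual continuation criterion no finite-time blow-up can occur, and the solution is global. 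This defines the flow $\wt{\Phi}_{N}$.

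The main obstacle is the mass conservation. Proceeding naively from \eqref{eq.trunc} would require checking that $\int_{\T}R_{N}(u_{N})\,\ov{u_{N}}\,\dd x$ is real, which is awkward because of the nested projections $\Pi_{N}^{\perp}$ appearing in $R_{N}$; one can do it by exploiting that the two bracketed terms in each line of $R_{N}$ are complex conjugates of one another, so that $\p^{-1}$ of their sum (resp.\ difference) is real (resp.\ purely imaginary). Routing the computation instead through the identity $K(u_{N},v_{N})(v_{N},u_{N})^{\top}=(-iu_{N},iv_{N})^{\top}$ avoids this bookkeeping, and is the step I would be most careful to verify.
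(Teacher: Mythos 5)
Your argument is correct, but your route to mass conservation differs from the paper's. The paper works directly with the scalar equation \eqref{eq.trunc}: it multiplies by $\ov{u}$, integrates, takes the imaginary part, and checks each term separately --- in particular it verifies $\int_{\T}\ov{u_{N}}\,R_{N}(u_{N})\in\R$ by observing that $\partial^{-1}$ maps real-valued functions to real-valued functions (exactly the ``awkward bookkeeping'' you chose to avoid). You instead stay at the level of the Hamiltonian system \eqref{systN}, use the skew-symmetry of $\Pi_{N}K\Pi_{N}$ for the bilinear pairing together with the identity $K(u_{N},v_{N})(v_{N},u_{N})^{\top}=(-iu_{N},\,iv_{N})^{\top}$ (which I have checked against \eqref{def.K}: the $\partial^{-1}$ terms do cancel in each row), and then see the remaining integrand $i(u_{N}\frac{\delta H}{\delta u}-v_{N}\frac{\delta H}{\delta v})$ integrate to zero from \eqref{eq.var1}--\eqref{eq.var2}. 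This is cleaner and explains structurally why the correction term $R_{N}$ cannot destroy the conservation law, whereas the paper's computation is more elementary and self-contained at the level of \eqref{ODEN}. The one step you should make explicit is the passage between the two formulations: to conclude that conservation of $M(u,v)=\int_{\T}uv$ along \eqref{systN} yields conservation of $\|u\|_{L^{2}(\T)}$ for solutions of \eqref{ODEN}, you need that a solution $u$ of \eqref{ODEN} lifts to the solution $(u,\ov{u})$ of \eqref{systN}, i.e.\ that the real slice $\{v=\ov{u}\}$ is invariant under \eqref{systN}. This follows from the conjugation symmetry of $K$ and of the variational derivatives (the $v$-equation evaluated at $(u,\ov{u})$ is the complex conjugate of the $u$-equation) together with uniqueness for the finite-dimensional ODE; it is implicit in Lemma \ref{Lem.eqN} but deserves a sentence in your write-up. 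Your globality argument (polynomial vector field on $E_{N}$, Cauchy--Lipschitz, coordinates controlled by the conserved $L^{2}$ norm since all norms on $E_{N}$ are equivalent) is the standard one and matches what the paper leaves implicit.
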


\begin{proof}
The first statement is clear by the previous construction. We now check that the $L^{2}-$norm of $u$ is conserved. Multiply \eqref{eq.trunc} with $\ov{u}$, integrate over $x\in \T$ and take the imaginary part. In the sequel we use that $\Pi_{N}^{2}=\Pi_{N}$ and $\Pi_{N}^{*}=\Pi_{N}$. Firstly by integration by parts, 
\begin{equation}\label{Ilk1}
\int_{\T}\ov{u} \,\partial_{x}^{2}u_{N}=\int_{\T}\ov{u_{N}} \,\partial_{x}^{2}u_{N}=-\int_{\T}|\partial_{x}u_{N}|^{2}\in \R.
\end{equation} 
Then 
\begin{eqnarray}
\text{Im}\;\int_{\T}i \ov{u} \,\Pi_{N}\Big( \partial_{x}(|u_{N}|^{2}u_{N}  )\Big)&=& \text{Re}\;\int_{\T} \ov{u_{N}}   \partial_{x}(|u_{N}|^{2}u_{N}  ) \nonumber\\
&=&-\text{Re}\;\int_{\T} (\partial_{x}\ov{u_{N}})    |u_{N}|^{2}u_{N}\nonumber\\
&=&-\frac14 \int_{\T}\partial_{x}(|u_{N}|^{4})=0\label{Ilk2}.
\end{eqnarray}
Now observe that if $f$ is real-valued, then $\partial^{-1}f$ is also real valued. Then it is easy to see that  
\begin{equation}\label{Ilk3}
\int_{\T} \ov{u} \,R_{N}(u_{N})=\int_{\T} \ov{u_{N}}\, R_{N}(u_{N})\in \R.
\end{equation}
Finally by \eqref{Ilk1}, \eqref{Ilk2} and \eqref{Ilk3} we obtain that $\dis \frac{d}{d t}\|u(t)\|^{2}_{L^{2}(\T)}=0$ which yields the result.
\end{proof}
Recall the definitions \eqref{def.mu} of $\mu_{N}$ and \eqref{G.N} of $G_{N}$. Then we define the measure $\wt{\rho}_{N}$ on $E_{N}$ by 
\begin{equation*}
\text{d}\wt{\rho}_{N}(u)=G_{N}(u)\text{d}\mu_{N}(u).
\end{equation*}
Then we have 
\begin{lemm}\label{lem.invN}
The measure $\widetilde{\rho}_{N}$ is invariant under the flow $\wt{\Phi}_{N}$ of \eqref{ODEN}.
\end{lemm}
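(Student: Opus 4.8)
The plan is to exhibit the density of $\wt{\rho}_N$ with respect to Lebesgue measure on $E_N$ as a function of quantities conserved by the flow $\wt{\Phi}_N$, and then to combine this with the invariance of Lebesgue measure itself. First I would make the density explicit. On $E_N$, writing $u=\sum_{|n|\le N}c_n\e^{inx}$ with $c_n=a_n+ib_n$, one has $\sum_{|n|\le N}\<n\>^2(a_n^2+b_n^2)=\|u_N\|_{L^2(\T)}^2+\|\p_x u_N\|_{L^2(\T)}^2$, so by \eqref{def.mu}
\[
\text{d}\mu_N=d_N\,\e^{-\|u_N\|_{L^2(\T)}^2-\|\p_x u_N\|_{L^2(\T)}^2}\prod_{|n|\le N}\text{d}a_n\,\text{d}b_n.
\]
Recalling from the appendix that $H(u_N,\ov{u_N})=\|\p_x u_N\|_{L^2(\T)}^2+\frac34 i\int_\T\ov{u_N}^2\p_x(u_N^2)+\frac12\int_\T|u_N|^6$, and that $\frac34 i\int_\T\ov{u_N}^2\p_x(u_N^2)=-\frac34 f_N(u)$ since that integral is purely imaginary, the definition \eqref{G.N} of $G_N$ gives, after the $\|\p_x u_N\|_{L^2(\T)}^2$ terms cancel,
\[
\text{d}\wt{\rho}_N(u)=d_N\,\chi\big(\|u_N\|_{L^2(\T)}\big)\,\e^{-H(u_N,\ov{u_N})-\|u_N\|_{L^2(\T)}^2}\prod_{|n|\le N}\text{d}a_n\,\text{d}b_n.
\]

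Next I would record that this density is invariant along the flow. Indeed $H(\Pi_N u,\Pi_N v)=H(u_N,\ov{u_N})$ is the Hamiltonian of \eqref{systN}, hence conserved by $\wt{\Phi}_N$; the mass $\|u_N\|_{L^2(\T)}^2$ is conserved by the preceding lemma; and $\chi(\|u_N\|_{L^2(\T)})$ is a function of the mass. Consequently $\Theta(u)=\chi(\|u_N\|_{L^2(\T)})\e^{-H(u_N,\ov{u_N})-\|u_N\|_{L^2(\T)}^2}$ satisfies $\Theta\circ\wt{\Phi}_N(t)=\Theta$ for every $t$. Granting that $\wt{\Phi}_N$ preserves the Lebesgue measure $\text{d}L=\prod_{|n|\le N}\text{d}a_n\,\text{d}b_n$, the change of variables $y=\wt{\Phi}_N(t)(x)$ then yields $(\wt{\Phi}_N(t))_*\wt{\rho}_N=\wt{\rho}_N$, which is the claim.

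The main point, which I expect to be the only genuinely computational step, is therefore to prove that $\wt{\Phi}_N$ preserves $\text{d}L$, that is, that the vector field $X$ of \eqref{eq.trunc} is divergence free. Writing $X$ in the coordinates $c_n$, the Lebesgue divergence equals $\sum_{|n|\le N}2\,\re\big(\p_{c_n}\dot c_n\big)$. The linear term $i\p_x^2 u_N$ contributes $-in^2$ on the diagonal, with vanishing real part; the gauge term $-iu_N F_{u_N}(t)$ contributes $-iF_{u_N}$ together with $-i\sum_n c_n\p_{c_n}F_{u_N}$, both real since $F_{u_N}$ is real (and $F_{u_N}$ has equal holomorphic and antiholomorphic degree), so its real part vanishes as well; and the cubic term $\Pi_N\p_x(|u_N|^2u_N)$ contributes, for the mode $k$, the quantity $2ik\|u_N\|_{L^2(\T)}^2$, whose real part sums to $2\|u_N\|_{L^2(\T)}^2\,\re\big(i\sum_{|k|\le N}k\big)=0$ because $\sum_{|k|\le N}k=0$.

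It then remains to check that the remainder $-iR_N(u_N)$ gives a divergence-free contribution, and this is the delicate point: the Poisson structure $\Pi_N K\Pi_N$ of \eqref{systN} is not canonical, so Lebesgue measure is not preserved automatically, and one must use the skew-symmetry of $K$ together with the precise form of $R_N$, in particular its inner projectors $\Pi_N^\perp$, to see the cancellation. Once this verification is complete, the invariance of $\Theta$ and of $\text{d}L$ combine, via the principle recorded above, to give the invariance of $\wt{\rho}_N$ under $\wt{\Phi}_N$.
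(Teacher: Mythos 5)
Your reduction is correct and is exactly what the paper's one-line proof (``direct application of the Liouville theorem'', citing \cite[Section 8]{BTT}) has in mind: you compute, correctly, that
$\text{d}\wt{\rho}_N=d_N\,\chi\big(\|u_N\|_{L^2(\T)}\big)\,\e^{-H(u_N,\ov{u_N})-\|u_N\|_{L^2(\T)}^2}\prod_{|n|\le N}\text{d}a_n\,\text{d}b_n$,
you observe that the density is a function of the two conserved quantities $H(u_N,\ov{u_N})$ and $\|u_N\|_{L^2(\T)}$, and you thereby reduce the lemma to the single statement that $\wt{\Phi}_N$ preserves Lebesgue measure on $E_N$. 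Up to that point everything you write is right (including the identity $\frac34 i\int_\T\ov{u_N}^{\,2}\p_x(u_N^2)=-\frac34 f_N(u)$ and the cancellation of the $\|\p_x u_N\|_{L^2}^2$ terms).

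The gap is that the Liouville step itself is not proved: you verify the vanishing of the divergence for the linear, gauge and cubic terms of \eqref{eq.trunc}, but you explicitly leave the contribution of $R_N(u_N)$ as something that ``remains to be checked''. Since the density argument is routine once the conservation laws are in hand, that unverified cancellation is the entire mathematical content of the lemma, and — as you yourself point out — it is genuinely at stake here, because the structure $J=\Pi_N K(u_N,v_N)\Pi_N$ of \eqref{systN} is not canonical and $R_N$ is precisely the part of the vector field generated by the non-constant entries $\pm w\,\p^{-1}w'\cdot$ of $K$. A cleaner way to finish than the term-by-term computation on \eqref{eq.trunc} is to work with the Hamiltonian form \eqref{systN} directly: for a vector field $J\nabla H$ one has
\begin{equation*}
\operatorname{div}\big(J\nabla H\big)=\sum_{i,j}\big(\p_i J_{ij}\big)\,\p_j H+\sum_{i,j}J_{ij}\,\p_i\p_j H,
\end{equation*}
and the second sum vanishes by the skew-symmetry $K^{*}=-K$ established in the appendix; it therefore suffices to verify $\sum_i\p_i J_{ij}=0$ for every $j$, which is a finite computation on the Fourier-matrix entries of $K$ alone, independent of $H$, carried out in the real coordinates $(a_n,b_n)$ of the invariant subspace $v=\ov{u}$. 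Until that verification (or, equivalently, the divergence computation for $-iR_N$) is actually carried out, your proof stops exactly where the lemma becomes nontrivial.
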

 
 \begin{proof}
 The proof is a direct application of the Liouville thereom. See e.g. \cite[Section 8]{BTT} for a similar argument.
 \end{proof}
 
 \begin{proof}[Proof of Proposition \ref{Prop.inv}]
We decompose the space $H^{\s}(\T)=E_{N}^{\perp}\oplus E_{N}$. From the previous analysis, we observe that the flow $\Phi_{N}$ of \eqref{ODE} is given by  $\Phi_{N}=\big(Id, \wt{\Phi}_{N}\big)$. Finally, the invariance of $\rho_{N}$ follows from Lemma \ref{lem.invN} and invariance of the Gaussian measure under the trivial flow on the high frequency part.
\end{proof}


\end{document}